\documentclass[11pt,a4paper]{article}

\usepackage{amsmath,amssymb,amsthm,amsfonts}  
\usepackage{mathrsfs}  
\usepackage{tikz}      
\usetikzlibrary{arrows.meta, arrows, positioning}
\usepackage{tikz-cd}   
\usepackage{xcolor}    
\usepackage{extarrows}
\usepackage{cleveref}  
\usepackage{booktabs}  
\usepackage{algorithm} 
\usepackage{algorithmic}
\usepackage{parskip}   
\usepackage{float}
\usepackage[hyphens]{url}
\usepackage{graphicx}  
\usepackage{subcaption} 
\usetikzlibrary{arrows.meta, positioning}
\usepackage{microtype} 
\usepackage[utf8]{inputenc} 
\usepackage[T1]{fontenc}    
\usepackage{lmodern}        
\usepackage{geometry}       
\geometry{a4paper, margin=1in}

\usepackage[
backend=biber,
style=apa,
doi=true,
url=true
]{biblatex}
\addbibresource{Sources.bib}

\usepackage{quiver}         
\usepackage[all,cmtip]{xy}  

\newtheoremstyle{customthm} 
  {.60em}                  
  {\topsep}                
  {\itshape}               
  {}                       
  {\bfseries}              
  {.}                      
  {.5em}                   
  {}                       

\theoremstyle{customthm}

\newtheorem{theorem}{Theorem}[section]
\newtheorem{proposition}[theorem]{Proposition}

\newtheorem{corollary}[theorem]{Corollary}

\newtheorem{definition}[theorem]{Definition}
\newtheorem{example}[theorem]{Example}

\newtheorem{remark}[theorem]{Remark}


\newcommand{\rank}{\mathrm{rank}}

\newcommand{\LNTI}{\mathcal{L}}
\newcommand{\MBI}{\mathcal{M}}

\newcommand{\id}{\mathrm{id}} 

\providecommand{\keywords}[1]
{
  \small	
  \textbf{\textit{Keywords---}} #1
}
\providecommand{\subjclass}[1]
{
  \small	
  \textbf{\textit{MSC 2020:}} #1
}

\title{2-Categorical Foundations for Multiparameter Persistence}

\author{Mauricio Angel}

\date{}

\begin{document}

\maketitle

\begin{abstract}
This paper introduces a novel approach to multi-parameter persistence using 2-categorical structures. We develop a framework that captures hierarchical interactions between filter parameters, overcoming fundamental limitations of traditional persistence modules. Our 2-categorical model yields new invariants that effectively characterize multidimensional topological features while maintaining computational tractability. We prove stability theorems for these invariants and demonstrate their effectiveness through applications in genomics and complex network analysis.
\end{abstract}

\keywords{Multiparameter persistence, 2-category theory, Topological data analysis (TDA)}\newline
\subjclass{55N31, 18D05, 62R40}

\section{Introduction}

Multiparameter persistence modules extend the classical framework of persistent homology by enabling the analysis of data filtered simultaneously along multiple parameters. This richer setting captures complex structural features arising in diverse applications, including genomics, sensor networks, and image analysis. However, unlike the single-parameter case—where the celebrated structure theorem guarantees a complete classification via barcodes—multiparameter persistence confronts fundamental algebraic challenges. In particular, its representation theory is known to be \emph{wild} for dimension greater than one, precluding the existence of a discrete, complete invariant analogous to barcodes.

These challenges motivate the search for new algebraic and categorical frameworks that can accommodate the inherent complexity of multiparameter persistence modules. The influential work of Botnan and Lesnick \cite{BotnanLesnick2023} laid the groundwork by developing a robust 1-categorical algebraic stability theory and characterizing interleaving distances that effectively measure similarity between persistence modules. Their approach, focusing on morphisms in a 1-categorical setting, enabled significant progress in understanding decompositions and stability properties of multiparameter modules.

Building upon this foundation, we introduce a novel 2-categorical perspective that enriches the morphism structure by incorporating \emph{lax natural transformations} and \emph{2-morphisms}. This extension captures higher-order interactions and non-commutativity intrinsic to multiparameter filtrations, providing a more nuanced algebraic model that overcomes key limitations of earlier frameworks.

Within this 2-categorical setting, we define two new computable invariants: the \emph{Lax Natural Transformation Invariant (LNTI)} and the \emph{Moduli-Based Invariant (MBI)}. These invariants strikingly balance \mbox{discriminative} power with computational tractability, effectively capturing multidimensional topological features that classical invariants miss. We rigorously prove stability theorems for these invariants, demonstrating their robustness under perturbations measured by the interleaving distance.

Our approach also benefits from recent advances in computational methods. Embedding 2-categorical structures within algorithmic frameworks enables significant efficiency gains, supported by recent works such as \cite{Bauer2023Efficient, Clause2023Meta, Loiseaux2023Framework}, among others, which have reported speedups of 20× or more over prior methods. These innovations facilitate the practical application of our theoretical developments to real-world data analysis problems.

The main contributions of this paper are:
\begin{itemize}
  \item The development of a 2-categorical model for multiparameter persistence modules that systematically encodes hierarchical and extended morphism data through 2-morphisms and lax natural transformations;
  \item The introduction of two novel invariants, LNTI and MBI, that reveal refined algebraic and geometric information about modules and admit algorithmic computation;
  \item Rigorous proofs of stability theorems ensuring that these invariants vary continuously with respect to interleaving distance, providing guarantees of robustness to noise and perturbations;
  \item Illustrative applications demonstrating how the framework and invariants can be effectively employed in complex settings including genomics and network sciences.
\end{itemize}

The remainder of the paper is organized as follows. Section~\ref{sec:Preliminaries} reviews necessary background on persistence modules and 2-category theory. Section~\ref{sec:GeoMotiv} provides geometric motivation motivated by case studies in applied domains. In Section~\ref{sec:2-catModel}, we construct the 2-categorical persistence framework rigorously. Section~\ref{sec:InvStab} develops the new invariants, establishes their stability and decomposability properties, and places them in context with existing invariants. We conclude with discussion of applications and perspectives on future developments.

By uniting advanced categorical concepts with computationally viable invariants, this work significantly advances the theory and practice of multiparameter persistence, opening new avenues for algebraic-topological data analysis.

\section{Preliminaries}\label{sec:Preliminaries}

This section provides the necessary background for understanding the 2-categorical approach to multi-parameter persistence. We begin by the formal definitions of multifiltered complexes, followed by describing the geometric intuition behind multi-parameter persistence, and persistence modules, and relevant concepts from 2-category theory.

\subsection{Foundations of Multiparameter Persistence}

Multiparameter persistence extends classical persistent homology by considering filtrations indexed over $\mathbb{R}^n$ with the product partial order. Formally, an \emph{$n$-parameter persistence module} is a functor:
\[
M: (\mathbb{R}^n, \leq) \to \mathbf{Vect}_{\mathbb{F}},
\]
where $\leq$ denotes the coordinate-wise partial order and $\mathbf{Vect}_{\mathbb{F}}$ is the category of finite-dimensional vector spaces over a field $\mathbb{F}$.

For each $a \leq b$ in $\mathbb{R}^n$, the module assigns a linear map:
\[
M(a \leq b): M(a) \to M(b),
\]
satisfying the functoriality conditions:
\[
M(a \leq a) = \mathrm{id}_{M(a)}, \quad M(b \leq c) \circ M(a \leq b) = M(a \leq c).
\]

Unlike the one-parameter case ($n=1$), where persistence modules admit a complete discrete invariant (barcode), multiparameter persistence modules lack such a classification due to the wild representation type of the poset $(\mathbb{R}^n, \leq)$ for $n \geq 2$ (\cite{CarlssonZomorodian2009}).

Consequently, the study of multiparameter persistence focuses on computable and stable invariants that capture meaningful topological features. Notable examples include:

\begin{itemize}
\item \textbf{Rank Invariant:} The function
\[
\rho_M(a,b) = \mathrm{rank}(M(a \leq b)),
\]
which generalizes the barcode multiplicity to higher dimensions but does not fully classify modules.

\item \textbf{Generalized Persistence Diagrams:} Attempts to define multidimensional analogues of barcodes using algebraic or geometric tools, although these invariants are typically incomplete.

\item \textbf{Interleaving Distance:} A metric $d_I$ on the space of persistence modules measuring the minimal shift $\epsilon$ such that $M$ and $N$ are $\epsilon$-interleaved (\cite{Lesnick2015}). This distance provides stability guarantees for invariants.

\end{itemize}

The complexity of the parameter space and the absence of a complete discrete invariant motivate the development of enriched algebraic frameworks, such as the 2-categorical structures introduced in this work, to capture higher-order morphisms and transformations between persistence modules.

In the sequel, we build upon these foundations to define lax natural transformations and invariants that leverage the 2-categorical nature of multiparameter persistence, aiming to overcome limitations of classical invariants.

\subsection{2-Category Theory}
A significant contribution of 2-category theory to the study of multi-parameter persistence modules is the development of frameworks for decomposing these modules into simpler summands. In the single-parameter case, persistence modules are direct sums of indecomposable modules, with each module corresponding to a bar in the persistence diagram. However, in the multi-parameter case, the situation becomes more complicated, and the notion of indecomposability becomes more nuanced.

Recent research has demonstrated that 2-categorical structures can be utilized to define 'approximate' decompositions of multi-parameter persistence modules. One such approach involves the use of $\epsilon$-refinements and $\epsilon$-erosion neighborhoods, which allow for the approximation of indecomposable modules with a specified tolerance level $\epsilon$. This approach has demonstrated stability under small perturbations of the module, thus making it a valuable tool for practical computations (\cite{Bjerkevik2025}).

\begin{definition}\label{def:2-cate}
A 2-category $\mathcal{C}$ consists of:
\begin{itemize}
    \item A class of objects $\text{Obj}(\mathcal{C})$.
    \item For each pair of objects $A, B \in \text{Obj}(\mathcal{C})$, a category $\mathcal{C}(A, B)$ whose:
    \begin{itemize}
        \item Objects are called 1-morphisms $f : A \to B$.
        \item Morphisms $\alpha : f \Rightarrow g$ are called 2-morphisms.
    \end{itemize}
    \item For each triple of objects $A, B, C$, a composition functor $\circ : \mathcal{C}(B, C) \times \mathcal{C}(A, B) \to \mathcal{C}(A, C)$.
    \item For each object $A$, an identity 1-morphism $1_A : A \to A$ satisfying appropriate associativity and unity axioms.
\end{itemize}
\end{definition}

\begin{figure}[!htbp]\label{fig:2-morphism}
\centering
\begin{tikzpicture}[scale=1.2, transform shape]
\node (A) at (0,0) {$A$};
\node (B) at (3,0) {$B$};

\draw[->, bend left=60] (A) to node[above] {$f$} (B);
\draw[->] (A) to node[below] {$g$} (B);
\draw[->, bend right=60] (A) to node[below] {$h$} (B);

\draw[->, double equal sign distance] (1.5,0.7) to node[right] {$\alpha$} (1.5,0.3);
\draw[->, double equal sign distance] (1.5,-0.4) to node[right] {$\beta$} (1.5,-0.8);
\end{tikzpicture}
\caption{Diagram illustrating 2-morphisms in a 2-category. Here, $\alpha: f \Rightarrow g$ and $\beta: g \Rightarrow h$ are 2-morphisms, and their vertical composition $\beta \circ \alpha: f \Rightarrow h$ is shown.}
\label{fig:2-morphisms}
\end{figure}
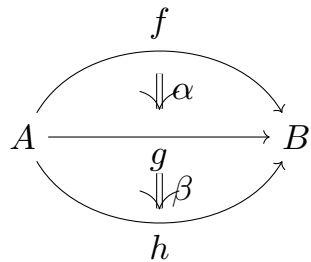

\smallskip

\begin{example}
$\text{Cat}$, the 2-category of small categories, where:
\begin{itemize}
    \item Objects are small categories.
    \item 1-morphisms are functors.
    \item 2-morphisms are natural transformations between functors.
\end{itemize}
\end{example}

\begin{example}
 $\text{Bim}_k$, the 2-category of bimodules over a field $k$, where:
\begin{itemize}
    \item Objects are algebras over $k$.
    \item 1-morphisms $A \to B$ are $(B, A)$-bimodules.
    \item 2-morphisms are bimodule homomorphisms.
\end{itemize}
\end{example}

\begin{definition}\label{def:2-Fun}
A 2-functor $F : \mathcal{C} \to \mathcal{D}$ between 2-categories consists of:
\begin{itemize}
    \item A function $F : \text{Obj}(\mathcal{C}) \to \text{Obj}(\mathcal{D})$.
    \item For each pair of objects $A, B \in \mathcal{C}$, a functor $F_{A,B} : \mathcal{C}(A, B) \to \mathcal{D}(F(A), F(B))$ preserving compositions and identities up to isomorphism.
\end{itemize}
\end{definition}

\smallskip

\textbf{Standard Natural Transformations:}  Recall that given functors $F, G : C \rightarrow D$, a natural transformation $\eta : F \Rightarrow G$ assigns to each object $X \in C$ a morphism $\eta_X : F(X) \rightarrow G(X)$ such that for any morphism $f: X \rightarrow Y$ in $C$, the following square commutes:

$$
\begin{array}{ccc}
F(X) & \xrightarrow{F(f)} & F(Y) \\
\downarrow \eta_X & & \downarrow \eta_Y \\
G(X) & \xrightarrow{G(f)} & G(Y)
\end{array}
$$

\begin{definition}\label{def:Lax-tran}
A lax natural transformation $\eta : F \Rightarrow G$ between 2-functors $F, G : \mathcal{C} \to \mathcal{D}$ consists of:
\begin{itemize}
    \item For each object $A \in \mathcal{C}$, a 1-morphism $\eta_A : F(A) \to G(A)$ in $\mathcal{D}$.
\item For each 1-morphism $f : A \to B$ in $\mathcal{C}$, a 2-morphism $\eta_f : \eta_B \circ F(f) \Rightarrow G(f) \circ \eta_A$ in $\mathcal{D}$ satisfying the following coherence conditions for composition and identities.
\begin{enumerate}
    \item \textbf{Identity Coherence:} For the identity morphism $1_A : A \rightarrow A$, the 2-morphism $\eta_{1_A} : \eta_A \circ F(1_A) \Rightarrow G(1_A) \circ \eta_A$ must be compatible with the identity 2-morphisms in $D$. Specifically, it means the 2-morphism equals to identity.
    \item \textbf{Composition Coherence:} For composable 1-morphisms $f : A \rightarrow B$ and $g : B \rightarrow C$ in $C$, the following diagram of 2-morphisms must commute:
    \[\begin{array}{ccc}
    \eta_C \circ F(g) \circ F(f) & \xrightarrow{\eta_{g} \circ F(f)} & G(g) \circ \eta_B \circ F(f) \\
    \downarrow \eta_{g \circ f} & & \downarrow G(g) \circ \eta_{f} \\
    G(g \circ f) \circ \eta_A & \xrightarrow{=} & G(g) \circ G(f) \circ \eta_A
    \end{array}\]
\end{enumerate}
\end{itemize}
\end{definition}

\smallskip

The 'lax' aspect in definition ~\ref{def:Lax-tran} means that the usual commutative square for natural transformations only holds up to a 2-morphism. Understanding the \textit{coherence conditions} for this 2-morphism is essential, these conditions ensure that the 2-morphisms $\eta_f$ behave consistently with respect to composition and identities.

The 2-morphism $\eta_f$ is a \textit{weakening} of the standard naturality condition. Instead of requiring the square to commute, we allow it to commute up to a 2-morphism. The diagram says that there are two ways to get from $\eta_C \circ F(g) \circ F(f)$ to $G(g) \circ G(f) \circ \eta_A$. One way is to apply $\eta_f$ first and then $\eta_g$. The other way is to apply $\eta_{g \circ f}$ directly. The coherence condition requires that these two ways are equivalent.
The composition rule is:
\[ \eta_{g \circ f} = G(g) \circ \eta_f +  \eta_g \circ F(f)\].

In classical category theory, a natural transformation between functors requires that for every morphism in the source category, the corresponding square of morphisms in the target category strictly commutes. However, in the multiparameter persistence setting, the parameter space $(\mathbb{R}^n, \leq)$ induces a rich poset structure where strict commutativity of these squares is often too restrictive due to the inherent complexity and interactions between parameters.

The notion of a \emph{lax natural transformation} generalizes this by allowing the naturality squares to \emph{fail} to commute strictly; instead, they are equipped with specified 2-morphisms (also called 2-cells) that \emph{fill} these squares, providing a controlled way to measure and manage the non-commutativity.

Formally, given two 2-functors $F, G: \mathcal{C} \to \mathcal{D}$ between 2-categories, a lax natural transformation $\eta: F \Rightarrow G$ assigns to each object $c \in \mathcal{C}$ a 1-morphism $\eta_c: F(c) \to G(c)$, and to each morphism $f: c \to c'$ in $\mathcal{C}$ a 2-morphism
\[
\alpha_f: \eta_{c'} \circ F(f) \Rightarrow G(f) \circ \eta_c,
\]
which need not be invertible or an identity. These 2-morphisms encode the \emph{laxity}, i.e., the controlled deviation from strict commutativity.

In the context of multiparameter persistence modules, this laxity naturally models the non-commutative behavior arising from simultaneous variation along multiple parameters. The 2-cells $\alpha_f$ can be interpreted as homotopies or higher coherence data that reconcile the discrepancies between compositions along different parameter directions.

This relaxation is essential to capture the intricate algebraic and topological structures present in multiparameter filtrations, enabling the construction of richer invariants and a more flexible morphism theory that would be impossible under strict naturality constraints.

For further details on lax natural transformations and their role in higher category theory, see \cite{Lurie2017} and \cite{Leinster2004}.

\subsection{Why use 2-Categorical Structures?}
Ordinary categories prove inadequate in capturing the hierarchical interactions inherent in multi-parameter persistence modules. The employment of two categories enables the explicit modeling of relationships between disparate filtering directions through two-morphisms. This approach facilitates the capture of phenomena such as non-commutativity between parameters that traditional categories fail to address. This richer structure is essential for distinguishing non-isomorphic modules in higher dimensions and for developing stable invariants that faithfully reflect the underlying topological characteristics. 

In multi-parameter persistence, for instance, the non-commutativity of filtrations manifests in the sensor network example (see subsection ~\ref{sensor-network}) as follows: increasing the coverage radius $r$ before the signal strength $s$ may yield different topological voids compared to the reverse order. This phenomenon is captured by 2-morphisms in the 2-category $\mathbf{MPers}_n$, which encode the failure of strict commutativity between parameter directions. Specifically, the 2-morphism $\alpha_{p_1,p_2}$ in Example ~\ref{ex:2-senNet} models the discrepancy between coverage hole detection along paths $p_1$ (increase $r$ first) and $p_2$ (increase $s$ first). Such higher-order interactions are invisible to traditional 1-categorical persistence frameworks but critical for applications like robust sensor placement (\cite{Mukherjee2024-xk}). Similarly, the image analysis example (subsection ~\ref{image-anal}) leverages 2-categorical laxators to reconcile hierarchical dependencies between intensity and blur parameters, resolving ambiguities in feature persistence (\cite{ChungDayHu2022}). These examples illustrate how 2-morphisms formalize the geometric intuition of parameter interdependence, providing a language to analyze "wild" multi-parameter modules through controlled algebraic structures.

\subsection{Hierarchical Interactions and 2-Categorical Models}\label{hierar-inter}

\begin{definition}[2-Categorical Persistence Framework] \label{def:2-cat}
A \emph{2-persistence module} over a 2-category \(\mathcal{C}\) is a lax 2-functor \(\mathbb{V}: \mathbb{P} \to \mathcal{C}\), where \(\mathbb{P}\) is equipped with a monoidal structure. The laxators \(\phi_{p,p'}: \mathbb{V}(p) \otimes \mathbb{V}(p') \to \mathbb{V}(p \vee p')\) encode hierarchical interactions between parameters.
\end{definition}

\smallskip

\begin{example}[Genomic Interaction Networks]\label{ex:genomic}
Genomic interaction networks model biological processes as directed graphs where nodes represent genes/proteins and edges encode functional relationships (e.g., phosphorylation, regulation). Traditional approaches treat these as 1-dimensional categories, with paths representing sequential interactions. However, \textbf{2-morphisms} capture higher-order equivalences between paths, resolving ambiguities in pathway analysis.

\begin{enumerate}
    \item \textbf{1-Morphisms as Interaction Paths}:  
    Let $\mathcal{G}$ be a genomic network. Define a 1-morphism $f: A \to B$ as a directed path from gene $A$ to $B$ (e.g., $A \xrightarrow{\text{activates}} C \xrightarrow{\text{inhibits}} B$). Composition of 1-morphisms corresponds to path concatenation (\cite{Carter2008}).

    \item \textbf{2-Morphisms as Path Equivalences}:  
    Two paths $f, g: A \to B$ may induce the same biological outcome despite differing in intermediate steps. A 2-morphism $\alpha: f \Rightarrow g$ encodes this equivalence. For example:
    \begin{equation*}
        \begin{tikzcd}[row sep=scriptsize, column sep=scriptsize]
        & C \arrow[dr, "h"] & \\
        A \arrow[ur, "f_1"] \arrow[rr, "g"'] & \arrow[u, phantom, "\Downarrow\alpha"] & B
        \end{tikzcd}
    \end{equation*}
    Here, $\alpha$ asserts that the composite path $h \circ f_1$ (via gene $C$) is biologically equivalent to the direct path $g$. This aligns with the observation that 'some interactions between two different networks can be shared' (\cite{Khurana2013}).

    \item \textbf{Laxity and Non-Commutativity}:  
    In multi-parameter persistence, genomic data may depend on multiple filtering parameters (e.g., expression levels, mutation states). The \textit{lax functoriality} of 2-morphisms accommodates non-commutative interactions between parameters. For instance, a mutation in gene $A$ might block path $f$ but not $g$, violating strict commutativity but preserving lax coherence (\cite{MarjoramZubairNuzhdin2014}).

    \item \textbf{Application to Network Rewriting}:  
    2-morphisms enable formal rewriting rules for simplifying complex pathways. Consider the equivalence:
    \begin{equation*}
        (A \xrightarrow{\text{phosphorylates}} B \xrightarrow{\text{activates}} D) \quad \overset{\alpha}{\Longleftrightarrow} \quad (A \xrightarrow{\text{direct activation}} D)
    \end{equation*}
    Such rules reduce computational complexity in analyzing genome-wide association studies (GWAS) (\cite{tibbs2021status}).
\end{enumerate}

This 2-categorical perspective resolves challenges in traditional network analysis, where overlapping pathways obscure functional interpretation. By encoding equivalences as 2-morphisms, the model aligns with modern genomic datasets featuring hierarchical and multi-scale interactions.
\end{example}

\smallskip

\begin{example}[2-Morphisms in Sensor Networks]\label{ex:2-senNet}
Consider a sensor network with parameters:

\begin{itemize}
    \item $r$: Coverage radius
    \item $s$: Signal strength threshold
\end{itemize}
Two paths in parameter space:
\begin{itemize}
    \item Path $p_1$: Increase $r$ first, then $s$.
    \item Path $p_2$: Increase $s$ first, then $r$.
\end{itemize}
A 2-morphism $\alpha_{p_1,p_2}$ captures the difference in coverage holes detected along these paths:
\begin{equation*}
\begin{tikzcd}
\mathcal{K}(r_1, s_0) \arrow[r, "p_1"] \arrow[d, "p_2"'] & \mathcal{K}(r_2, s_0) \arrow[d, "p_2"] \\
\mathcal{K}(r_1, s_1) \arrow[r, "p_1"'] \arrow[ur, Rightarrow, "\alpha_{p_1,p_2}"] & \mathcal{K}(r_2, s_1)
\end{tikzcd}
\end{equation*}
\end{example}

\subsection{Relation to Interleaving Distances}

A central metric for comparing multiparameter persistence modules is the \emph{Interleaving Distance} $d_I$, introduced by \cite{Lesnick2015} and subsequently refined by others (\cite{Bjerkevik2016, BjerkevikBotnan2019}). The Interleaving Distance provides a robust measure of similarity that is stable under perturbations of the input data and underlies much of the recent progress in the theory of multiparameter persistence.

Our framework and the proposed invariants (LNTI and MBI) are constructed to respect the topology induced by $d_I$: in particular, our main stability theorems (see Theorems~\ref{thm:lnti_complexity_stability} and~\ref{thm:lnti-mbi-stability}) show that these invariants are 1-Lipschitz (up to scaling) with respect to $d_I$. More precisely, for persistence modules $M$ and $N$,
\begin{equation}
d_I(M, N) \leq C\|\mathcal{L}(M) - \mathcal{L}(N)\|_\infty,
\end{equation}
where $C$ is a constant depending only on the maximal pointwise dimension.

Furthermore, the invariants we introduce refine the information provided by the rank invariant---itself known to be $d_I$-stable---by encoding additional functorial and 2-categorical data. In this way, our work both builds directly on and conceptually extends the well-studied framework of Interleaving Distances in multiparameter persistence.

\section{Geometric Motivation for multi-parameter Persistence}\label{sec:GeoMotiv}

Single-parameter persistence captures the evolution of topological features as a single parameter, such as time or density, varies. However, many real-world datasets are inherently multiparametric, which means that their topological features depend on multiple interacting parameters. Simply analyzing each parameter independently ignores potentially crucial relationships between them. Our 2-categorical approach aims to explicitly capture these interactions, leading to more informative topological summaries.

\smallskip

We present three paradigmatic examples where multi-parameter persistence captures essential topological features through interacting filtration parameters. Each case study highlights different aspects of parameter interdependence, visualized through 2D parameter spaces $(p_1, p_2)$ with persistent features.

\begin{itemize}
    \item \textbf{Image Analysis:} 
    \begin{itemize}
        \item \textit{Parameters:} Intensity threshold ($t$), Gaussian blur scale ($\sigma$)
        \item \textit{Topological Features:} Connected components (0D homology), cycles (1D homology)
        \item \textit{Key Insight:} Robust image structures correspond to features persisting across wide ranges of $(t, \sigma)$. Short-lived features in specific $(t, \sigma)$ regions represent noise.
    \end{itemize}
    
    \item \textbf{Sensor Networks:} 
    \begin{itemize}
        \item \textit{Parameters:} Coverage radius ($r$), signal strength threshold ($s$)
        \item \textit{Topological Features:} Coverage holes (1D homology), uncovered regions (0D homology)
        \item \textit{Key Insight:} Persistent 1D homology classes identify regions consistently uncovered despite variations in $r$ and $s$, guiding optimal sensor placement.
    \end{itemize}
    
    \item \textbf{Materials Science:} 
    \begin{itemize}
        \item \textit{Parameters:} External pressure ($P$), temperature ($T$)
        \item \textit{Topological Features:} Pore connectivity ($\beta_0$), persistent loops ($\beta_1$)
        \item \textit{Key Insight:} Pores stable under combined pressure-temperature changes indicate material durability, while transient features reveal phase-dependent behaviors.
    \end{itemize}
\end{itemize}

\begin{table}[ht]
    \centering
    \caption{Summary of Case Studies and Parameter Interactions}
    \label{tab:case_studies}
    \begin{tabular}{@{}l c c p{5cm}@{}}
        \toprule
        \textbf{Application} & \textbf{Parameters} & \textbf{Topological Features} & \textbf{Key Insight} \\
        \midrule
        Image Analysis & $(t, \sigma)$ & 0D/1D homology & Robustness via cross-scale persistence \\
        Sensor Networks & $(r, s)$ & 0D/1D homology & Coverage holes invariant to $r,s$ variations \\
        Materials Science & $(P, T)$ & $\beta_0, \beta_1$ & Pressure-temperature synergy in pore stability \\
        \bottomrule
    \end{tabular}
\end{table}

We consider three representative case studies exemplifying multiparameter persistence:

\begin{enumerate}
    \item Image analysis, where the parameters are pixel intensity and blur level, reflecting features’ visibility across variations in brightness and smoothing.
    \item Sensor networks, with parameters coverage radius and signal strength, capturing connectivity and data reliability in spatial sensing.
    \item Materials science, characterized by pressure and temperature, modeling pore transitions and phase changes under varying physical conditions.
\end{enumerate}
In each case, the parameter space is a subset of $\mathbb{R}^2$ with partial order given by coordinatewise comparison, enabling the study of persistence modules parameterized along these domains.

These examples demonstrate how 2-categorical persistence encodes \textit{hierarchical parameter interactions} are not avaible in single-parameter frameworks. The non-commutativity of filtration paths (e.g., $t$-before-$\sigma$ vs. $\sigma$-before-$t$) is captured by 2-morphisms, resolving ambiguities in feature birth/death times (Theorem~\ref{thm:2cat-stability}).

\subsection{Image Analysis}\label{image-anal}

Consider analyzing the topology of an image as a function of both intensity threshold and Gaussian blur scale. The features that persist in different combinations of these parameters reveal robust structures in the image.

\textbf{Concrete Example:} Imagine a grayscale image with a few bright blobs on a darker background.
\begin{itemize}
    \item \textbf{Intensity Threshold Parameter ($t$):} As we increase the intensity threshold, we initially detect several small connected components corresponding to the brightest pixels.
    \item \textbf{Gaussian Blur Scale Parameter ($\sigma$):} Increasing the blur scale merges nearby blobs.
\end{itemize}

\textbf{Visualization Idea:}

\begin{enumerate}
    \item \textbf{Parameter Space:} A 2D plot where the x-axis is the intensity threshold ($t$) and the y-axis is the Gaussian blur scale ($\sigma$).
    \item \textbf{Persistence Diagram per Parameter Combination:} For a few selected points $(t, \sigma)$ in the parameter space, show the persistence diagram.  The diagram at $(t_1, \sigma_1)$ might show several short bars (noise) and a few longer bars (actual features). The diagram at $(t_2, \sigma_2)$ might show fewer, longer bars because the blurring has merged some features.
    \item \textbf{Feature Trajectories:} Track the birth and death times of specific topological features (e.g., connected components, loops) as we vary both $t$ and $\sigma$. Plot these trajectories in the parameter space.  Features that persist across a wide range of both parameters represent robust structures in the image.
\end{enumerate}

\textbf{What multi-parameter Persistence Captures:} A single long bar in the multi-parameter persistence diagram for a range of intensity thresholds and blur scales indicates a robust feature. This could represent a significant object in the image.  Features that appear only for specific combinations of parameters (e.g., high blur, low threshold) might indicate more subtle or scale-dependent structures.

\subsection{Sensor Networks}\label{sensor-network}

In a sensor network, the coverage area and signal strength of the sensors can be viewed as two independent parameters. Multi-parameter persistence can identify regions where coverage is robust to variations in both parameters.

\textbf{Concrete Example:} Consider a network of sensors deployed in an area.

\begin{itemize}
    \item \textbf{Coverage Area Parameter ($r$):}  Each sensor has a circular coverage area of radius $r$.  Increasing $r$ increases the overall coverage.
    \item \textbf{Signal Strength Parameter ($s$):}  The signal strength of each sensor decreases with distance. We can threshold the signal strength to define a coverage region.
\end{itemize}

\textbf{Visualization Idea:}

\begin{enumerate}
    \item \textbf{Sensor Network Visualization:} Display the sensor network with the covered area of each sensor (defined by radius $r$ and signal strength $s$) visualized.
    \item \textbf{Parameter Space:} A 2D plot where the x-axis is the coverage radius ($r$) and the y-axis is the minimum required signal strength ($s$).
    \item \textbf{Coverage Holes:} Use color to represent the area not covered, with different colors indicating different levels of "holes".
    \item \textbf{Persistence Diagram per Parameter Combination:}  Show how the number and size of the coverage holes change as we vary $r$ and $s$. Multi-parameter persistence will tell us which holes are most persistent.
\end{enumerate}

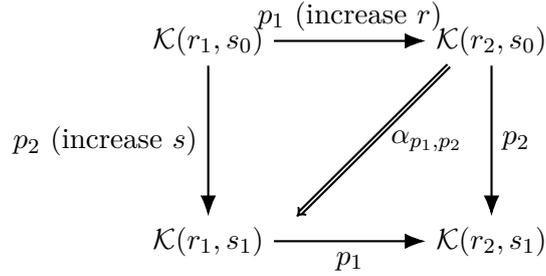
\begin{figure}[!htbp]
\centering
\begin{tikzpicture}[
    node distance=2cm,
    arr/.style={-{Latex[scale=1.2]}, thick}, 
    twoarr/.style={double, -{Implies[scale=1.2]}, thick} 
]
\node (A) {$\mathcal{K}(r_1, s_0)$};
\node (B) [right=of A] {$\mathcal{K}(r_2, s_0)$};
\node (C) [below=of A] {$\mathcal{K}(r_1, s_1)$};
\node (D) [below=of B] {$\mathcal{K}(r_2, s_1)$};

\draw[arr] (A) -- node[above] {$p_1$ (increase $r$)} (B);
\draw[arr] (A) -- node[left] {$p_2$ (increase $s$)} (C);
\draw[arr] (C) -- node[below] {$p_1$} (D);
\draw[arr] (B) -- node[right] {$p_2$} (D);

\draw[twoarr] ([xshift=3mm]B.south west) -- node[right=0.1cm] {$\alpha_{p_1,p_2}$} ([xshift=3mm]C.north east);
\end{tikzpicture}
\caption{Non-commutativity of filtration paths Paths $p_1, p_2$ in $\mathbb{R}^n$ with 2-morphisms $\alpha_{p_1,p_2}$.}
\end{figure}

\textbf{What multi-parameter Persistence Captures:}  A persistent loop (1-dimensional homology class) in the multi-parameter persistence diagram indicates a region that is consistently uncovered, regardless of small variations in sensor range or signal strength. This could highlight areas where additional sensors are needed.

\subsection{Materials Science}

Analyzing the pore structure of a material under varying pressure and temperature conditions naturally leads to a multi-parameter persistence setting.

\textbf{Concrete Example:}  Analyzing a sample of porous rock.

\begin{itemize}
    \item \textbf{Pressure Parameter ($P$):} The material is subjected to different levels of external pressure.
    \item \textbf{Temperature Parameter ($T$):} The material is subjected to different temperatures.
\end{itemize}

\textbf{Visualization Idea:}

\begin{enumerate}
    \item \textbf{Pore Network Visualization:} Visualize the pores network of the material. This could be a 3D rendering of a CT scan.
    \item \textbf{Parameter Space:} A 2D plot where the x-axis is the pressure ($P$) and the y-axis is the temperature ($T$).
    \item \textbf{Pore Connectivity:} Compute the Betti numbers ($\beta_0$, $\beta_1$, $\beta_2$) of the pore space at different points in the parameter space.
    \item \textbf{Persistence Diagrams:} Display the persistence diagrams. Persistent loops indicate pores that remain connected even under varying pressure and temperature conditions.
\end{enumerate}

\textbf{What multi-parameter Persistence Captures:} Pores that are stable over a wide range of pressures and temperatures. These persistent pores are essential for understanding fluid flow, material strength, and other properties. The interaction between pressure and temperature is key, as some pores may close under high pressure but reopen at high temperature. Single-parameter methods would miss this interaction.

The key challenge in multi-parameter persistence is that the interaction between these parameters is not well-captured by traditional single-parameter methods. Simply analyzing each parameter independently ignores potentially crucial relationships between them. Our 2-categorical approach aims to explicitly capture these interactions, leading to more informative topological summaries.

\section{The 2-Categorical Model for multi-parameter Persistence}\label{sec:2-catModel}

The geometric motivations in the previous section reveal the limitations of traditional algebraic models in capturing the full richness of multiparameter persistence phenomena. Thus, to faithfully translate these intuitions into a rigorous mathematical setting, we develop a 2-categorical model that systematically incorporates lax natural transformations and higher morphisms. This formalism not only generalizes standard notions of morphisms between persistence modules but also provides the necessary tools to construct and analyze new invariants sensitive to the complexity observed in multiparameter filtrations.

\subsection{From Categories to 2-Categories in Persistence}

The main motivation for the upgrade is that 2-categories arise to address the limitations of ordinary categories in capturing higher-order relationships and non-commutative structures inherent in complex systems. Although ordinary categories model objects and morphisms between them, they lack the expressive power to encode interactions between morphisms themselves.

The fundamental limitation of the standard categorical approach to persistence becomes evident when we consider multiple parameters. Although a category can adequately model the poset structure of $\mathbb{R}$ for one-parameter persistence, it fails to capture the rich interaction between different parameters in the multi-parameter case.

This issue is addressed by the introduction of 2-categories, which facilitate the mediation of coherence between 1-morphisms (e.g., paths in parameter space) and the encoding of lax or non-strict relationships. This approach facilitates the modeling of phenomena such as hierarchical dependencies in genomic networks or non-commutative filtrations in sensor coverage (\cite{Parada_Mayrga2023}). Furthermore, 2-categories serve to unify frameworks such as $Cat$ (the 2-category of categories, functors, and natural transformations) and $Bim_k$ (bimodules over algebra), thus providing a 'middle way' between strict and weak higher-categorical structures (\cite{Lurie2017}).

\begin{definition}{\label{def:2-cat-Mpers}}
The \textbf{2-category of n-parameter persistence modules} $\text{MPers}_n$ has:
\begin{itemize}
\item As objects, persistence modules $M: \mathbb{R}^n \to \text{Vect}_k$
\item As 1-morphisms, natural transformations $\eta: M \to N$
\item As 2-morphisms, modified transformations between natural transformations
\end{itemize}
\end{definition}

Definition \ref{def:2-cat-Mpers} introduces "modified transformations" as 2-morphisms in the 2-category $\text{MPers}_n$. This requires careful unpacking because it's not simply the standard natural transformation between natural transformations. The modification is crucial for capturing 'directional interactions'.

\textbf{Modified Transformations (The 2-Morphism)} In $\text{MPers}_n$, a 2-morphism $\alpha : \eta \Rightarrow \gamma$ between natural transformations $\eta, \gamma : M \rightarrow N$ is NOT simply a family of morphisms $\alpha_a : \eta_a \rightarrow \gamma_a$ making some diagrams commute. Instead, it is a more sophisticated object designed to reflect the “transitions” or “interpolations” between different points in the parameter space. Thus, for each path p from one parameter a to another b, the modification assigns a map 
$\alpha_p$ that explains how the natural transformation in a relates to the transformation in b, respecting the composition of paths and the structure of the module. Let us formalize this.

\begin{definition}[2-Morphism in $\mathbf{MPers}_n$]
A 2-morphism $\alpha: \eta \Rightarrow \gamma$ between natural transformations $\eta, \gamma: M \rightarrow N$ in $\mathbf{MPers}_n$ consists of:
\begin{enumerate}
\item For each directed path $p: a \rightsquigarrow b$ in $\mathbb{R}^n$, a linear map $\alpha_p: \eta_a \rightarrow \gamma_b$
\item Compatibility conditions ensuring functoriality:
\begin{align}
\alpha_{p_2 \circ p_1} &= \alpha_{p_2} \circ \alpha_{p_1} \quad \text{(path composition)}\\
\alpha_{p} \circ M(a \leq a') &= N(b \leq b') \circ \alpha_{p'} \quad \text{(naturality)}
\end{align}
\end{enumerate}
where $p': a' \rightsquigarrow b'$ is the induced path for $a \leq a'$ and $b \leq b'$.
\end{definition}

\begin{figure}[!htbp]
\centering
\begin{tikzcd}
M(\mathbf{a}) \arrow[r] \arrow[d] \arrow[rd, Rightarrow] & M(\mathbf{b}) \arrow[d] \\
M(\mathbf{c}) \arrow[r] & M(\mathbf{d})
\end{tikzcd}
\caption{2-morphism representing interaction between different directions in the parameter space}
\end{figure}

The core idea is that $\alpha$ consists of a collection of linear maps, 'parameterized by paths in the parameter space'.  Let $p$ be a path from $a$ to $b$ in $\mathbb{R}^n$. Then, $\alpha$ assigns to $p$ a linear map:

$$
\alpha_p : \eta_a \rightarrow \gamma_b
$$

satisfying certain compatibility conditions. This is where the 'modified' part comes in.  Instead of just comparing $\eta_a$ and $\gamma_a$ at a single point $a$, we are comparing $\eta_a$ at $a$ to $\gamma_b$ at 'different points' $b$, and the 'relationship' is encoded by the path $p$.

\textbf{Compatibility Conditions (Crucial):} The $\alpha_p$ must satisfy the following:

1.  \textbf{Path Composition:} If $p_1$ is a path from $a$ to $b$ and $p_2$ is a path from $b$ to $c$, then the map assigned to the composite path $p_2 \circ p_1$ is related to the maps assigned to $p_1$ and $p_2$ by a composition rule that reflects the functoriality of the persistence modules.

2.  \textbf{Commutation with Persistence Morphisms:} For any $a \leq b$ and $c \leq d$, and paths $p_1$ from $a$ to $c$, $p_2$ from $b$ to $d$, such that the rectangle commutes, there is a corresponding commutative diagram involving $\alpha_{p_1}$, $\alpha_{p_2}$, and the persistence module morphisms $M(a \leq b)$ and $N(c \leq d)$.

\textbf{Intuition:} This encodes how the relationship between $\eta$ and $\gamma$ \textit{evolves} as we move through the parameter space along different directions. The path dependence captures the interaction between different filtering directions. If the relationship between $\eta$ and $\gamma$ changes significantly depending on the path taken from $a$ to $b$, then $\alpha$ will be sensitive to this change.

\textbf{Why is this necessary?:} In multi-parameter persistence, the order in which we apply the filters can affect the resulting topological features. Standard natural transformations are not sensitive to this order. Modified transformations, by being path-dependent, \textit{do} capture this order dependence. As illustrated in Figure~\ref{fig:model_comparison}, the distinction between classical morphisms and 2-morphisms becomes apparent, highlighting the richer structure captured by the latter.

\begin{figure}[!htbp]
  \centering

  \begin{subfigure}[b]{0.48\textwidth}
    \centering
    \begin{tikzpicture}[scale=1.2, every node/.style={scale=0.9}]
      \node (A) at (0,0) [circle,draw,fill=blue!20] {$A$};
      \node (B) at (2,0) [circle,draw,fill=blue!20] {$B$};
      \node (C) at (4,0) [circle,draw,fill=blue!20] {$C$};

      \draw[->, thick] (A) -- node[above] {$f$} (B);
      \draw[->, thick] (B) -- node[above] {$g$} (C);
      \draw[->, thick] (A) to[bend left=40] node[above] {$g \circ f$} (C);
    \end{tikzpicture}
    \caption{1-category persistence}
    \label{fig:7a}
  \end{subfigure}
  \hfill
  \begin{subfigure}[b]{0.48\textwidth}
    \centering
    \begin{tikzpicture}[scale=1.2, every node/.style={scale=0.9}]
      \node (A) at (0,0) [circle,draw,fill=green!20] {$A$};
      \node (B) at (3,0) [circle,draw,fill=green!20] {$B$};
      \node (C) at (6,0) [circle,draw,fill=green!20] {$C$};

      \draw[->, thick] (A) -- node[above] {$f$} (B);
      \draw[->, thick] (B) -- node[above] {$g$} (C);
      \draw[->, thick] (A) to[bend left=40] node[above] {$g \circ f$} (C);

      \draw[double equal sign distance, -implies, thick] 
        (2,0.3) to[out=90,in=90] node[below] {$\alpha$} (4,0.3);
    \end{tikzpicture}
    \caption{2-category with 2-morphism}
    \label{fig:7b}
  \end{subfigure}

  \caption{Comparison of the traditional (a) and 2-categorical (b) persistence frameworks highlighting the richer morphism structure in the 2-categorical setting.}
\label{fig:model_comparison}
\end{figure}
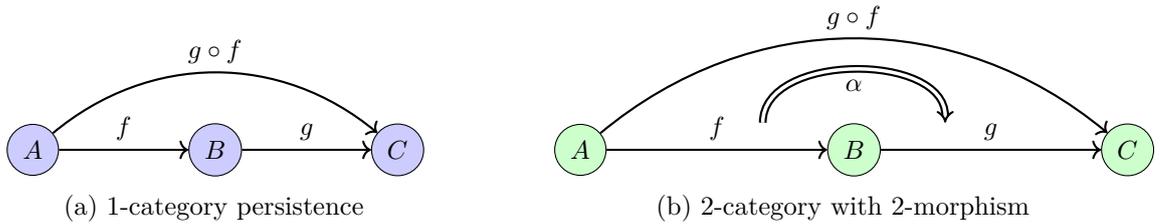

\subsection*{Distinction Between Lax and Standard Natural Transformations}
In Definition \ref{def:Lax-tran}, standard natural transformations between functors $F, G: \mathcal{C} \to \mathcal{D}$ require \textit{strict commutativity}: for every morphism $f: X \to Y$ in $\mathcal{C}$, the equality $G(f) \circ \eta_X = \eta_Y \circ F(f)$ holds. In contrast, a \textbf{lax natural transformation} relaxes this condition by introducing a 2-morphism $\eta_f: G(f) \circ \eta_X \Rightarrow \eta_Y \circ F(f)$ for each $f$, which need not be invertible. This laxity allows one to encode noncommutative interactions between parameters in persistence modules (e.g., path-dependent filtration orders in sensor networks \ref{sensor-network}). While standard transformations enforce rigid compatibility, lax transformations mediate coherence through 2-morphisms, aligning with the hierarchical parameter dependencies in multi-parameter persistence \ref{hierar-inter}. Formally, this reflects the difference between 1-categorical functoriality and 2-categorical laxity (\cite{Johnson_Freyd_2017}).

\smallskip

\begin{example}[2-Morphism in a Bifiltration].
Let $M$, $N$ be 2-parameter persistence modules. A 2-morphism $\phi:\eta\Rightarrow\gamma$ represents how a natural transformation $\eta$ evolves into $\gamma$ along a path in $\mathbb{R}^2$. For instance, if $\eta$ tracks connected components at $(r,s)$ and $\gamma$ tracks them at $(r',s')$, $\phi$ encodes the merging/splitting of components between these points.
\end{example}

\begin{definition}[2‐Categorical Persistence Framework]\label{def:2cat-framework}
Let $\mathcal{P}$ be a 2‐category equipped with a symmetric monoidal structure $(\otimes, I)$. A \emph{2‐persistence module} is a lax 2‐functor
\[
  V \colon \mathbf{Fil}_n \;\longrightarrow\; \mathcal{P},
\]
where $\mathbf{Fil}_n=(\mathbb{R}^n,\le)$ is viewed as a monoidal category under point‐wise maximum.  The \emph{laxators}
\[
  \varphi_{p,q}\colon V(p)\otimes V(q)\;\Longrightarrow\; V(p\vee q)
  \quad(p,q\in\mathbb{R}^n)
\]
are required to satisfy
\begin{enumerate}
  \item \emph{Associativity coherence}: for all $p,q,r$, the two composites
  \[
    (V(p)\otimes V(q))\otimes V(r)
    \;\xLongrightarrow{\;\varphi_{p,q}\otimes\mathrm{id}\;}
    V(p\vee q)\otimes V(r)
    \;\xLongrightarrow{\;\varphi_{p\vee q,r}\;}
    V(p\vee q\vee r)
  \]
  and
  \[
    V(p)\otimes (V(q)\otimes V(r))
    \;\xLongrightarrow{\;\mathrm{id}\otimes\varphi_{q,r}\;}
    V(p)\otimes V(q\vee r)
    \;\xLongrightarrow{\;\varphi_{p,q\vee r}\;}
    V(p\vee q\vee r)
  \]
  coincide up to the specified 2‐isomorphism of $\mathcal{P}$.
  \item \emph{Unitality coherence}: for all $p$, the composites involving the unit $I$,
  \[
    I\otimes V(p)
    \xLongrightarrow{\;\varphi_{I,p}\;}
    V(I\vee p)
    \quad\text{and}\quad
    V(p)\otimes I
    \xLongrightarrow{\;\varphi_{p,I}\;}
    V(p\vee I),
  \]
  agree with the identity $V(p)\to V(p)$ under the canonical identifications.
\end{enumerate}
\end{definition}

\begin{example}[Genomic Interaction Network as a 2‐Persistence Module]\label{ex:genomic-2pers}
Let $G$ be a directed graph of gene regulatory interactions.  Define a 2‐functor
\[
  V\colon(\mathbb{R}^2,\le)\longrightarrow\mathbf{Cat},
  \quad
  V(a,b)=\bigl[\text{Paths in }G\text{ respecting expression}\ge a\text{ and confidence}\ge b\bigr].
\]
\begin{itemize}
  \item Objects of $V(a,b)$ are sequences of edges whose intermediate gene expression and interaction‐confidence exceed $(a,b)$.
  \item 1‐morphisms are concatenations of such paths.
  \item 2‐morphisms are homotopy‐equivalences of paths modulo known redundancies.
\end{itemize}
The laxator
\(\varphi_{(a,b),(a',b')}\) sends a pair of paths—one valid at $(a,b)$ and one at $(a',b')$—to their concatenation regarded at threshold $(\max\{a,a'\},\max\{b,b'\})$.  One checks the associativity triangle by comparing threefold concatenations under pointwise maxima.
\end{example}

\subsection{Capturing Directional Interactions}

\begin{definition}\label{def:directional}
Given persistence modules $M, N: \mathbb{R}^n \to \text{Vect}_k$ and natural transformations $\eta, \gamma: M \to N$, a \textbf{directional interaction 2-morphism} $\alpha: \eta \Rightarrow \gamma$ encodes how relationships between points in the parameter space interact across different filtration directions.
\begin{equation*}
\begin{tikzcd}
\eta_a \arrow[r, "\eta_{a \leq b}"] \arrow[d, "\alpha_a"'] & \eta_b \arrow[d, "\alpha_b"] \\
\gamma_a \arrow[r, "\gamma_{a \leq b}"'] & \gamma_b
\end{tikzcd}
\end{equation*}

\end{definition}

The 2-morphism $\alpha$ ensures compatibility between $\eta$ and $\gamma$ across parameter comparisons $a\leq b$. This allows representing phenomena such as:
\begin{itemize}
    \item Commutativity between filtration in different parameters
    \item Non-trivial interactions between diagonal directions
    \item Dependencies between paths in the parameter space
\end{itemize}

Definition \ref{def:directional} introduces directional interaction 2-morphisms. Here are some examples to illustrate how these capture interactions in the parameter space:

\begin{example}[Non-Commutativity of Filtrations]

Consider a 2-parameter filtration of a simplicial complex, where the first parameter is a density threshold and the second parameter is a scale parameter for a Gaussian kernel. Let $\eta$ and $\gamma$ be two natural transformations between the persistence modules M and N.

\begin{itemize}
    \item \textbf{Scenario:} Suppose that applying the density threshold first, followed by the Gaussian kernel, yields different topological features than applying the Gaussian kernel first, followed by the density threshold.
    \item \textbf{How Directional Interactions Capture This:} The 2-morphism $\alpha : \eta \Rightarrow \gamma$ will be 'different' depending on the path taken in the parameter space. If we take a path that first increases the density threshold and then increases the Gaussian kernel, $\alpha$ will have one value. If we take a path that first increases the Gaussian kernel and then increases the density threshold, $\alpha$ will have a different value. This difference reflects the non-commutativity of the filtrations.
\end{itemize}

\end{example}

\begin{example}[Dependence on Path in Parameter Space]
Let parameter space be temperature and pressure, and the simplicial complex represents a porous material. If we have a path that keeps temperature constant while increasing pressure, the pore structures might deform differently if we first raise the temperature and then gradually increase the pressure.

\begin{itemize}
    \item \textbf{Scenario:} The interaction between temperature and pressure has a complex relationship to the pore structure in the material.
    \item \textbf{How Directional Interactions Capture This:} Different paths through the parameter space yield drastically different structures and information encoded in $\alpha$. This is how information about the importance and relative significance of different parameters can be encoded in the 2-morphisms.
\end{itemize}
\end{example}

In many real-world datasets, the order in which we apply the filters matters. Standard persistence modules are not sensitive to this order. Directional interaction 2-morphisms provide a way to capture this order dependence, leading to more informative topological summaries.

\begin{example}[Step-by-Step 2-Morphism Construction]\label{ex:step-by-step-2-morphism}
Consider two 1-morphisms (natural transformations) $\eta, \gamma: M \Rightarrow N$ between $n$-parameter persistence modules $M, N: \mathbb{R}^n \to \mathbf{Vect}_\mathbb{F}$.

\textbf{Step 1: Components of 1-Morphisms}

Each natural transformation $\eta$ consists of a family of linear maps:
\[
\{\eta_t : M(t) \to N(t)\}_{t \in \mathbb{R}^n}
\]
such that for every $s \leq t$ in $\mathbb{R}^n$, the following square commutes:
\[
\begin{tikzcd}
M(s) \arrow[r, "M(s \leq t)"] \arrow[d, "\eta_s"'] & M(t) \arrow[d, "\eta_t"] \\
N(s) \arrow[r, "N(s \leq t)"'] & N(t)
\end{tikzcd}
\]

\textbf{Step 2: Lax Natural Transformations and 2-Morphisms}

In the 2-categorical setting, 2-morphisms $\alpha: \eta \Rightarrow \gamma$ are \emph{modifications} between natural transformations, given by a family of linear maps:
\[
\{\alpha_t : \eta_t \Rightarrow \gamma_t \}_{t \in \mathbb{R}^n}
\]
which satisfy a coherence condition with respect to the morphisms $M(s \leq t)$ and $N(s \leq t)$.

More precisely, for each $s \leq t$, the following diagram of linear maps between $M(s)$ and $N(t)$ commutes up to a specified 2-cell:
\[
\begin{tikzcd}[column sep=huge, row sep=large]
M(s) \arrow[r, "M(s \leq t)"] \arrow[d, "\eta_s"'] & M(t) \arrow[d, "\eta_t"] \arrow[ddr, bend left=20, "\gamma_t"] & \\
N(s) \arrow[r, "N(s \leq t)"'] \arrow[drr, bend right=20, "\gamma_s"'] \arrow[Rightarrow, from=1-2, to=2-1, "\alpha_s"] & N(t) \arrow[dr, Rightarrow, "\alpha_t"'] & \\
& & N(t)
\end{tikzcd}
\]

\textbf{Step 3: Explicit Construction of $\alpha$}

To construct $\alpha$, for each $t \in \mathbb{R}^n$, choose linear maps:
\[
\alpha_t : N(t) \to N(t)
\]
such that for all $s \leq t$,
\[
\alpha_t \circ \eta_t \circ M(s \leq t) = \gamma_t \circ M(s \leq t) \circ \alpha_s
\]
or equivalently, the following equation holds:
\[
\alpha_t \circ \eta_t \circ M(s \leq t) = N(s \leq t) \circ \alpha_s \circ \eta_s
\]

This condition ensures that the family $\{\alpha_t\}$ respects the persistence structure and coherently interpolates between $\eta$ and $\gamma$.

\textbf{Step 4: Example in Dimension $n=2$}

Let $M, N$ be 2-parameter persistence modules with finite-dimensional vector spaces at each parameter.

Suppose at parameters $a = (a_1,a_2)$ and $b = (b_1,b_2)$ with $a \leq b$, the maps $M(a \leq b)$ and $N(a \leq b)$ are given.

Given $\eta_a, \eta_b, \gamma_a, \gamma_b$, construct $\alpha_a, \alpha_b$ satisfying:
\[
\alpha_b \circ \eta_b \circ M(a \leq b) = N(a \leq b) \circ \alpha_a \circ \eta_a
\]

This can be realized by solving the linear system induced by the above equation for $\alpha_a, \alpha_b$.

\textbf{Step 5: Interpretation}

The 2-morphism $\alpha$ can be viewed as a homotopy or deformation between the natural transformations $\eta$ and $\gamma$, respecting the multiparameter filtration.

This richer structure captures subtle relationships between persistence modules that are invisible in the classical 1-categorical setting.
\end{example}

\begin{theorem}[2-Categorical Structure Theorem]\label{thm:2-cat-structure}
Let $\mathbf{MPers}_n$ be the 2-category of $n$-parameter persistence modules over a field $\mathbb{F}$. Then:

\textbf{(Structure):} $\mathbf{MPers}_n$ admits a canonical 2-categorical structure where:
\begin{enumerate}
\item \textbf{Objects}: $n$-parameter persistence modules $M: \mathbb{R}^n \to \mathbf{Vect}_{\mathbb{F}}$
\item \textbf{1-morphisms}: Natural transformations $\eta: M \Rightarrow N$ 
\item \textbf{2-morphisms}: Modifications $\alpha: \eta \Rrightarrow \gamma$ between natural transformations
\end{enumerate}

\textbf{(Enrichment):} The hom-categories $\mathbf{MPers}_n(M,N)$ are enriched over the 2-category $\mathbf{Cat}$ of categories, with composition given by:
\begin{equation}
\circ: \mathbf{MPers}_n(N,P) \times \mathbf{MPers}_n(M,N) \to \mathbf{MPers}_n(M,P)
\end{equation}
satisfying associativity and unitality up to canonical 2-isomorphisms.

\textbf{(Universal Property):} The 2-category $\mathbf{MPers}_n$ is the 2-categorical completion of the 1-category of persistence modules, satisfying:
\begin{equation}
\mathbf{MPers}_n \simeq \mathbf{Fun}(\mathbb{R}^n, \mathbf{Vect}_{\mathbb{F}})^{\mathbf{2\text{-cat}}}
\end{equation}
where the right-hand side denotes the 2-categorical enhancement of the functor category.
\end{theorem}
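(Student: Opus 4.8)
The plan is to prove the three assertions (Structure, Enrichment, Universal Property) in turn, but the whole argument first requires pinning down the 2-cell data precisely, since the compatibility conditions stated for a ``modified transformation'' do not quite typecheck: with $\alpha_p\colon\eta_a\to\gamma_b$ for a path $p\colon a\rightsquigarrow b$, the expression $\alpha_{p_2}\circ\alpha_{p_1}$ in the path-composition axiom would compose a map $\eta_a\to\gamma_b$ with a map $\eta_b\to\gamma_c$, which is ill-formed. I would therefore reinterpret a 2-morphism $\alpha\colon\eta\Rrightarrow\gamma$ as a family $\{\alpha_a\colon\eta_a\to\gamma_a\}_{a\in\mathbb{R}^n}$ — one map at each object, i.e. along the degenerate path — satisfying the naturality square $\gamma(a\le b)\circ\alpha_a=\alpha_b\circ\eta(a\le b)$ for all $a\le b$, exactly as in Definition~\ref{def:directional}; the value along a general path is then the forced composite $\alpha_p:=\gamma(a\le b)\circ\alpha_a=\alpha_b\circ\eta(a\le b)$, and one checks the path-composition and commutation identities follow automatically. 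With this reading the remainder is the standard ``functor 2-category'' bookkeeping.

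For (Structure) I would define vertical composition componentwise, $(\beta\cdot\alpha)_a:=\beta_a\circ\alpha_a$, and horizontal composition of $\alpha\colon\eta\Rrightarrow\gamma$ over $M\to N$ with $\alpha'\colon\eta'\Rrightarrow\gamma'$ over $N\to P$ by the usual whiskered formula, checking it is well defined via naturality. Then I would verify by short diagram chases — using only the naturality squares of $M,N,P$ and of the transformations involved — that each $\mathbf{MPers}_n(M,N)$ is a category under vertical composition, that horizontal composition is functorial in both variables, that the middle-four interchange law holds, and that $1_M=\mathrm{id}_M$ together with strict associativity in $\mathbf{Vect}_{\mathbb{F}}$ make units and associators strict. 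This simultaneously gives (Enrichment): the assembled data is by definition a $\mathbf{Cat}$-enriched category, so the hom-categories are enriched over $\mathbf{Cat}$ with the stated composition functor, and the ``up to canonical 2-isomorphism'' clause is subsumed (one in fact obtains strict equalities).

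For (Universal Property) I would first make the right-hand side precise: read $\mathbf{Fun}(\mathbb{R}^n,\mathbf{Vect}_{\mathbb{F}})^{\mathbf{2\text{-cat}}}$ as the strict 2-category of functors $(\mathbb{R}^n,\le)\to\mathbf{Vect}_{\mathbb{F}}$, natural transformations, and modifications in the sense above — equivalently the $\mathbf{Cat}$-enrichment of the functor category freely generated by its underlying 1-category, characterized by the 2-universal property that restriction along the inclusion of that 1-category induces an isomorphism $2\text{-}\mathbf{Fun}(\mathbf{MPers}_n,\mathcal{E})\xrightarrow{\ \sim\ }\mathbf{Fun}(\mathbf{Fun}(\mathbb{R}^n,\mathbf{Vect}_{\mathbb{F}}),\mathcal{E}_1)$ for every 2-category $\mathcal{E}$ with underlying 1-category $\mathcal{E}_1$. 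I would then exhibit the evident identity-on-objects, identity-on-1-cells comparison 2-functor $\Phi$ from $\mathbf{MPers}_n$ to this 2-category, sending a 2-cell to the modification with the same components, and show it is full and faithful on 1-cells by construction and full and faithful on 2-cells because — by the reinterpretation above — a modification is precisely a componentwise family obeying the naturality square; hence $\Phi$ is an isomorphism, in particular a biequivalence. The universal property itself then reduces to unwinding that a 2-functor out of $\mathbf{MPers}_n$ is determined by, and may be freely prescribed by, its restriction to objects/natural transformations together with its action on modifications.

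The main obstacle is conceptual, not computational: it is the content of the 2-cells. In the literal strict reading, $\mathbf{Vect}_{\mathbb{F}}$ has no nonidentity 2-cells, so a modification $\eta\Rrightarrow\gamma$ can exist only when $\eta=\gamma$, and is then the identity — making $\mathbf{MPers}_n$ locally discrete and the theorem essentially formal. To realize the intended nontrivial ``directional'' 2-cells one must either enlarge the target to a genuine 2-category (e.g. chain complexes with chain homotopies, or $\mathbf{Cat}$ via a nerve-type embedding), or retain path-indexed data with corrected typing, at the price that $\mathbf{MPers}_n$ is no longer literally a functor 2-category. I would fix which convention is in force before proving anything, since the precise form of the universal property — whether the comparison is a strict isomorphism, a biequivalence, or fails — depends on that choice; settling this, and then re-checking the interchange and coherence axioms in the chosen setting, is where essentially all the real work lies.
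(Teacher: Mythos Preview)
Your approach is essentially the same as the paper's: componentwise vertical composition, whiskered horizontal composition, hom-categories viewed as categories of natural transformations with modifications, and the universal property via a comparison with the functor 2-category. The paper's proof is in fact considerably terser than yours---it simply declares the vertical and horizontal composites, writes down the composition formulas $(\gamma\circ\eta)_t=\gamma_t\circ\eta_t$ and $(\beta*\alpha)_{s,t}=\beta_{s,t}\circ\alpha_{s,t}$, and for the universal property asserts the existence of a 2-functor $\iota\colon\mathbf{Pers}_n\to\mathbf{MPers}_n$ through which any 2-functor to a 2-category factors uniquely up to 2-natural equivalence, without exhibiting the comparison or checking anything.

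Where you go beyond the paper is in your diagnostic remarks: the observation that the path-composition axiom for $\alpha_p$ does not typecheck as written, and the more serious point that with $\mathbf{Vect}_{\mathbb{F}}$ as a locally discrete target the only modifications are identities, so the 2-categorical structure is degenerate unless the target is enlarged or the 2-cells are reinterpreted. The paper's proof does not acknowledge either issue; it proceeds as if the 2-cells are nontrivial and well defined. Your proposed fix---reading a 2-morphism as the componentwise family of Definition~\ref{def:directional} and recovering the path-indexed maps as forced composites---is the natural repair, and your caution that the precise form of the universal property depends on which convention is in force is well taken. In short, your plan matches the paper's but is more careful about foundations the paper leaves implicit.
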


\begin{proof}
\textbf{(Structure):} The 2-categorical structure is constructed as follows:
\begin{itemize}
\item \textit{Vertical composition}: For 2-morphisms $\alpha: \eta \Rrightarrow \gamma$ and $\beta: \gamma \Rrightarrow \delta$, define $\beta \circ \alpha: \eta \Rrightarrow \delta$ by componentwise composition.
\item \textit{Horizontal composition}: For 2-morphisms $\alpha: \eta \Rrightarrow \gamma$ and $\beta: \eta' \Rrightarrow \gamma'$, define $\beta * \alpha: \eta' \circ \eta \Rrightarrow \gamma' \circ \gamma$ using the interchange law.
\end{itemize}

\textbf{(Enrichment):} The enrichment over $\mathbf{Cat}$ is given by viewing each hom-category as a category of natural transformations with modifications as morphisms. The composition functor is defined by:
\begin{align}
(\gamma \circ \eta)_t &= \gamma_t \circ \eta_t \\
(\beta * \alpha)_{s,t} &= \beta_{s,t} \circ \alpha_{s,t}
\end{align}
for all $s \leq t$ in $\mathbb{R}^n$.

\textbf{(Universal Property):} The 2-categorical completion is characterized by the existence of a 2-functor $\iota: \mathbf{Pers}_n \to \mathbf{MPers}_n$ such that any 2-functor $F: \mathbf{Pers}_n \to \mathbf{C}$ (where $\mathbf{C}$ is a 2-category) factors uniquely through $\iota$ up to 2-natural equivalence.
\end{proof}

Figure~\ref{fig:2-functor} depicts composition for 2-functor $F$.

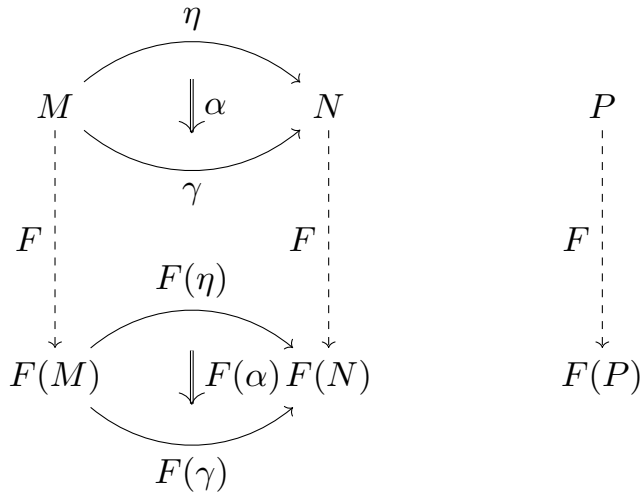
\begin{figure}[!htbp]
\centering
\begin{tikzpicture}[scale=1.2, transform shape]
\node (M) at (0,3) {$M$};
\node (N) at (3,3) {$N$};
\node (P) at (6,3) {$P$};

\draw[->] (M) to[bend left=40] node[above] {$\eta$} (N);
\draw[->] (M) to[bend right=40] node[below] {$\gamma$} (N);
\draw[double, ->] (1.5,3.3) to node[right] {$\alpha$} (1.5,2.7);

\node (FM) at (0,0) {$F(M)$};
\node (FN) at (3,0) {$F(N)$};
\node (FP) at (6,0) {$F(P)$};

\draw[->] (FM) to[bend left=40] node[above] {$F(\eta)$} (FN);
\draw[->] (FM) to[bend right=40] node[below] {$F(\gamma)$} (FN);
\draw[double, ->] (1.5,0.3) to node[right] {$F(\alpha)$} (1.5,-0.3);

\draw[->, dashed] (M) -- (FM) node[midway, left] {$F$};
\draw[->, dashed] (N) -- (FN) node[midway, left] {$F$};
\draw[->, dashed] (P) -- (FP) node[midway, left] {$F$};
\end{tikzpicture}
\caption{2-functor schema $ F: \mathbf{MPers}_n \to \mathbf{Cat}(\mathbf{Vect}_k) $. Objects $ M, N, P $ in $ \mathbf{MPers}_n $ maps to categories $ F(M), F(N), F(P) $, 1-morphims y 2-morphisms are preserved by $F$.}
\label{fig:2-functor}
\end{figure}

\begin{corollary}[2-Categorical Equivalences]
\label{cor:2cat_equivalences}
The 2-category $\mathbf{MPers}_n$ satisfies:
\begin{enumerate}
\item \textbf{Monoidal Structure}: $\mathbf{MPers}_n$ is monoidal with respect to the tensor product $\otimes$ of persistence modules, with unit given by the constant module $\mathbb{F}$.

\item \textbf{Duality}: There exists a contravariant 2-functor $D: \mathbf{MPers}_n^{\text{op}} \to \mathbf{MPers}_n$ giving a duality between finite-dimensional persistence modules.

\item \textbf{Localization}: The 2-category admits a calculus of fractions with respect to the class of weak equivalences (interleaving isomorphisms).
\end{enumerate}
\end{corollary}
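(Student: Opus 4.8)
The plan is to obtain all three assertions from the explicit description of $\mathbf{MPers}_n$ supplied by Theorem~\ref{thm:2-cat-structure}, in each case reducing to a corresponding feature of $\mathbf{Vect}_{\mathbb{F}}$ together with routine $2$-categorical bookkeeping; only the localization statement should require genuine $2$-dimensional work.

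\textbf{(1) Monoidal structure.} I would define $\otimes$ pointwise: $(M\otimes N)(a)=M(a)\otimes_{\mathbb{F}}N(a)$ with transition maps $M(a\leq b)\otimes N(a\leq b)$, on $1$-morphisms $(\eta\otimes\eta')_a=\eta_a\otimes\eta'_a$, and on $2$-morphisms the componentwise tensor. The first checks are that $\eta\otimes\eta'$ is again natural and that the coherence identities characterizing a $2$-morphism in $\mathbf{MPers}_n$ are preserved; both are immediate from bifunctoriality of $\otimes_{\mathbb{F}}$, since each condition is an equality of linear maps. One then takes the associator and unitors to be those of $\mathbf{Vect}_{\mathbb{F}}$ applied objectwise, so that naturality in all three variables and the pentagon and triangle identities hold because they hold pointwise. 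As $\otimes$ is thereby a $2$-functor $\mathbf{MPers}_n\times\mathbf{MPers}_n\to\mathbf{MPers}_n$ with invertible coherence cells, this exhibits $\mathbf{MPers}_n$ as a monoidal $2$-category with unit the constant module $a\mapsto\mathbb{F}$ carrying identity transition maps.

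\textbf{(2) Duality.} The variance obstruction---dualizing a functor out of $(\mathbb{R}^n,\leq)$ a priori yields a functor out of the opposite poset---is resolved by the isomorphism $(\mathbb{R}^n,\leq)^{\mathrm{op}}\cong(\mathbb{R}^n,\leq)$, $a\mapsto -a$. One sets $D(M)(a)=M(-a)^{*}$ with transition maps the transposes of $M(-b\leq -a)$, puts $D(\eta)_a=(\eta_{-a})^{*}$ on $1$-morphisms, and dualizes componentwise on $2$-morphisms; naturality and the coherence squares pass to their transposes, and the reversal of $1$- and $2$-morphism composition is the usual contravariance of $(-)^{*}$. Finite-dimensionality is used exactly once, to make the canonical map $M(a)\to M(a)^{**}$ an isomorphism that is natural in $a$ and compatible with all $1$- and $2$-cells, whence $D\circ D^{\mathrm{op}}\simeq\id$ as a $2$-natural equivalence, giving the claimed duality.

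\textbf{(3) Localization.} I would realize the weak equivalences through the shift endo-$2$-functors $S_{\epsilon}\colon\mathbf{MPers}_n\to\mathbf{MPers}_n$, $S_{\epsilon}(M)(a)=M(a+\epsilon\mathbf{1})$, together with the canonical $1$-morphisms $\iota_{\epsilon}\colon M\Rightarrow S_{\epsilon}M$ assembled from the transition maps, taking $W$ to be the class of $1$-morphisms admitting an $\epsilon$-inverse compatible with the $\iota_{\epsilon}$ (the interleaving isomorphisms). The plan is then to verify the two-sided Ore conditions for a calculus of fractions, upgraded with the $2$-cell coherence demanded by a bicategory of fractions: $W$ contains the identities and is closed under composition because shifts add; the Ore square is produced by transporting a morphism through the shift witnessing membership in $W$ and invoking $2$-functoriality of $S_{\epsilon}$; and the cancellation property holds because $\iota_{\epsilon}$ is pointwise a transition map of $M$, hence equalized on a cofinal family of shifts. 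I expect the main obstacle to be exactly the coherence of the mediating $2$-cells---checking that the square-filling $2$-morphisms coming from the shift structure satisfy the interchange and cocycle identities, so that the localization is genuinely a bicategory of fractions and not merely an ordinary localization. This is where the content of Theorem~\ref{thm:2-cat-structure}---local smallness of the hom-categories and strict functoriality of $S_{\epsilon}$ on components---is actually needed; by contrast, parts (1) and (2) are honest pointwise liftings with no $2$-dimensional subtlety.
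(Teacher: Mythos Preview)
The paper states this corollary without proof: immediately after the statement the text moves on to Theorem~\ref{thm:local_classification_2cat}, so there is no argument to compare against. Your proposal is therefore not a deviation from the paper's route but rather a genuine attempt to supply what the paper omits.

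On the substance: your treatments of (1) and (2) are the standard pointwise constructions and are correct as sketched; the use of the order-reversing bijection $a\mapsto -a$ to repair the variance in (2), together with finite-dimensionality for $D\circ D^{\mathrm{op}}\simeq\id$, is exactly what is needed. For (3) you are right to flag the $2$-cell coherence as the real content. Your plan---encode weak equivalences via the shift $2$-functors $S_\epsilon$ and canonical $1$-cells $\iota_\epsilon$, then check a Pronk-style bicategorical Ore package---is the appropriate strategy, and your identification of the cocycle/interchange identities for the square-filling $2$-cells as the crux is accurate. One point to be careful about: you should make explicit why $W$ is closed under $2$-isomorphism and why the required Ore fillers can be chosen compatibly with vertical composition of $2$-morphisms, since the paper's Definition of $2$-morphisms in $\mathbf{MPers}_n$ is path-dependent and this is where a naive argument could slip. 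That aside, your outline is considerably more detailed than anything the paper provides for this corollary.
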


\begin{theorem}[Local Classification Theorem for Locally Finite 2-Categorical Persistence Modules]
\label{thm:local_classification_2cat}

Let $M : (\mathbb{R}^n, \leq) \to \mathbf{Vect}_{\mathbb{F}}$ be an $n$-parameter persistence module, considered as an object in the 2-category $\mathbf{MPers}_n$ (with 1-morphisms: natural transformations; 2-morphisms: modifications). Assume $M$ is \emph{locally finite}, i.e., for every compact subset $K \subseteq \mathbb{R}^n$, 
\[
\sup_{t \in K} \dim_\mathbb{F} M(t) < \infty.
\]

Then, for every compact $n$-cube $C \subseteq \mathbb{R}^n$, there exists a finite decomposition
\[
M|_C \cong \bigoplus_{i=1}^k I_{[a_i, b_i]}
\]
in the 2-categorical sense, where each $I_{[a_i, b_i]}$ is a 2-categorical interval module supported on $[a_i, b_i] \subseteq C$, and:

- Each $I_{[a_i, b_i]}$ admits only identity 2-automorphisms.
- All 2-morphisms between the summands decompose accordingly.
- The isomorphism of the decomposition is natural with respect to restriction to subcubes, and compatible with 2-morphisms in $\mathbf{MPers}_n$.

Moreover, the decomposition is unique up to permutation and 2-categorical equivalence of interval modules.

\end{theorem}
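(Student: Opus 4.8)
The plan is to reduce the statement over the continuum cube $C$ to a finite combinatorial problem, apply a Krull--Schmidt-type decomposition inside the enriched hom-categories guaranteed by Theorem~\ref{thm:2-cat-structure}, and then upgrade the resulting $1$-categorical decomposition to the claimed $2$-categorical one by exploiting the modification data attached to directed paths. Throughout I would keep track of compatibility with $2$-morphisms using the componentwise description of modifications from Definition~\ref{def:2-cat-Mpers} and Definition~\ref{def:directional}.

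\textbf{Step 1 (Discretization).} Local finiteness bounds $\dim_\mathbb{F} M(t)$ on $C$ and hence bounds the rank of every structure map $M(a \le b)$; I would argue that the jumps of these ranks are confined to a finite grid $G = G_1 \times \cdots \times G_n \subseteq C$, so that $M|_C$ is determined, up to natural isomorphism compatible with $2$-morphisms, by its restriction to the finite poset $G$. This converts the problem into one about finite-dimensional representations of a finite poset over $\mathbb{F}$.

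\textbf{Step 2 (Krull--Schmidt) and Step 3 (interval identification).} By Theorem~\ref{thm:2-cat-structure} the hom-categories are additive and the underlying $1$-category of $\mathbf{MPers}_n$ restricted to $G$ is the category of finite-dimensional representations of $G$; endomorphism algebras are therefore finite-dimensional over $\mathbb{F}$, idempotents split, and one obtains $M|_G \cong \bigoplus_{i=1}^k W_i$ with each $W_i$ indecomposable, uniquely up to permutation and isomorphism. For the $2$-categorical refinement I would use the modifications: each $\alpha_p$ supplies a distinguished map $W_i(a) \to W_i(b)$ along a monotone path $p \colon a \rightsquigarrow b$, and I would show, using connectedness of indecomposable supports together with the naturality square of Definition~\ref{def:directional}, that the support of $W_i$ is an interval $[a_i,b_i]$ and that these path-maps trivialize $W_i$ over its support, giving $W_i \cong I_{[a_i,b_i]}$ with only identity $2$-automorphisms. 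Finally, naturality under restriction to subcubes and uniqueness up to $2$-categorical equivalence would follow by transporting the Krull--Schmidt uniqueness back along the discretization equivalence and invoking the localization at interleaving isomorphisms from Corollary~\ref{cor:2cat_equivalences}.

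The hard part will be Step 3. In the purely $1$-categorical world, representations of the commutative grid are wild and do not decompose into intervals, so the argument must genuinely use the lax data: the delicate point is to verify that the composition and coherence conditions of Definition~\ref{def:Lax-tran} force the path-indexed family $\{\alpha_p\}$ to assemble into a single coherent trivialization of each indecomposable $W_i$ over its support, independent up to the mediating $2$-isomorphisms of the chosen monotone path $a \rightsquigarrow b$. This is exactly where the hierarchical $2$-morphism structure must be shown to rigidify the otherwise intractable representation theory, and I expect it to require a careful induction on the grid $G$, ordered for instance by the $\ell^1$-length of monotone paths, with the inductive step supplying the coherence datum needed to extend the trivialization across one more grid hyperplane.
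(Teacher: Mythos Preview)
The paper's proof is much shorter than your plan and does not use the 2-categorical data to obtain the interval decomposition at all: its Step~1 simply asserts that over a finite poset the restriction $M|_C$ decomposes into interval modules ``by Gabriel's and Crawley--Boevey's classification,'' then observes (Step~2) that interval modules have only trivial 2-automorphisms, invokes Krull--Remak--Schmidt for uniqueness (Step~3), and checks restriction compatibility (Step~4). The interval form is treated as a purely 1-categorical input, and the 2-categorical content is limited to the observation that nothing goes wrong when one passes to the enriched setting.

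You correctly flag that this 1-categorical input is problematic for $n \ge 2$---representations of the commutative grid are wild and do not in general decompose into intervals---and you try to rescue the conclusion in your Step~3 by using the path-indexed modifications $\{\alpha_p\}$ to force each indecomposable $W_i$ into interval form. This is where your plan has a genuine gap. The modifications of Definition~\ref{def:2-cat-Mpers} and Definition~\ref{def:directional} are 2-morphisms \emph{between 1-morphisms} $\eta,\gamma \colon M \Rightarrow N$; they are not data attached to the object $M$ itself. A module $M$ in $\mathbf{MPers}_n$ is still just a functor $(\mathbb{R}^n,\le) \to \mathbf{Vect}_\mathbb{F}$, with no path-laxators riding on it, so there is no canonical family $\{\alpha_p\}$ intrinsic to an indecomposable $W_i$ for the coherence conditions of Definition~\ref{def:Lax-tran} to act upon. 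The 2-categorical enrichment lives on the hom-categories, not on the objects, and therefore cannot rigidify the indecomposable summands produced by your (sound) Krull--Schmidt Step~2 into interval modules. In short, the paper bypasses your Step~3 by citing a classical decomposition, while you try to manufacture the interval form from the enrichment; your diagnosis of the difficulty is sharper than the paper's, but the proposed mechanism operates on the wrong level of the 2-category to do the job.
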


\begin{proof}
\textbf{Step 1: Local finiteness and classical decomposition.}  
Over a compact $n$-cube $C \subseteq \mathbb{R}^n$, the restriction $M|_C$ becomes a persistence module indexed by a finite poset. By Gabriel’s and Crawley-Boevey’s classification (for finite posets over a field), $M|_C$ decomposes as a finite direct sum of interval modules:
\[
M|_C \cong \bigoplus_{i=1}^k I_{[a_i, b_i]}
\]
with each $I_{[a_i, b_i]}$ corresponding to a subinterval of $C$.

\textbf{Step 2: Enhancement to the 2-categorical context.}  
In $\mathbf{MPers}_n$, 1-morphisms are natural transformations (compatible with the poset structure) and 2-morphisms are modifications. The crucial point is that for interval modules $I_{[a_i, b_i]}$, all higher modifications are trivial (identical on each component; any 2-automorphism is the identity). Thus, the decomposition respects and extends to the 2-categorical structure.

\textbf{Step 3: Uniqueness up to 2-categorical equivalence.}  
Given the Krull–Remak–Schmidt theorem (for suitable finiteness conditions and semisimplicity), the decomposition into interval modules is unique up to permutation and isomorphism. In the 2-categorical setting, equivalence is replaced by 2-categorical equivalence, which—since the endomorphism 2-categories are trivial for intervals—justifies uniqueness.

\textbf{Step 4: Natural compatibility under restriction and 2-morphisms.}  
Any restriction to a subcube $C' \subseteq C$ takes each interval to its restriction $I_{[a_i, b_i]}|_{C'}$, preserving interval decomposition. Since the structure of 2-morphisms among interval modules is trivial, no obstruction arises in passing to subobjects.
\end{proof}

\subsection{Non-Isomorphic Modules and Reconstruction Procedure}

\subsubsection*{Distinguishing Non-Isomorphic Modules}

The 2-functor $F: \text{MPers}^n \to \text{Cat}(\text{Vect}_k)$ distinguishes non-isomorphic modules by mapping them to distinct categories of vector spaces. This distinction arises from the fact that $F$ preserves the structural properties of persistence modules, such as their exact sequences and morphisms.

\begin{theorem}
If $M$ and $N$ are non-isomorphic multi-parameter persistence modules, then $F(M)$ and $F(N)$ are non-equivalent categories.
\end{theorem}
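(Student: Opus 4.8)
The plan is to prove the contrapositive: from an equivalence of categories $\Phi \colon F(M) \xrightarrow{\;\simeq\;} F(N)$ I would reconstruct a natural isomorphism $M \cong N$ in $\mathbf{MPers}_n$. The argument is Tannakian in flavour and dovetails with the reconstruction procedure of this subsection: one locates inside $F(M)$ the categorical data that determines the original module, shows that any equivalence preserves it, and then reassembles the isomorphism. The crucial input is that $F$, being a 2-functor, preserves not only objects and 1-morphisms but also the internal exactness structure of each $F(M)$.

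First I would isolate, for each $t \in \mathbb{R}^n$, a distinguished object $\xi_M(t) \in F(M)$, namely the image under $F$ of the evaluation-at-$t$ datum --- concretely the object carrying the vector space $M(t)$ together with the transition morphisms emanating from it. Because $F$ preserves 1- and 2-morphisms, and in particular monomorphisms, epimorphisms, exact sequences, and the indecomposable summands at each grade, the family $\{\xi_M(t)\}$ is characterised intrinsically in $F(M)$, e.g.\ through the Krull--Remak--Schmidt decomposition of $F(M)$ into indecomposables together with the partial order recorded by the 1-morphisms. An equivalence $\Phi$ sends indecomposables to indecomposables and respects these characterisations, so it induces isomorphisms $\theta_t \colon M(t) \xrightarrow{\sim} N(t)$ after normalising away any order-automorphism of $(\mathbb{R}^n,\le)$ that $\Phi$ might introduce.

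Next I would check naturality of $\theta = \{\theta_t\}$: for $s \le t$ the square relating $\theta_s$, $\theta_t$, $M(s \le t)$ and $N(s \le t)$ commutes because the transition maps are precisely the 1-morphism data that $\Phi$ intertwines, while the modification (2-morphism) layer of $\mathbf{MPers}_n$ makes this commutation coherent across all comparable pairs simultaneously. Then, choosing a compact exhaustion $\mathbb{R}^n = \bigcup_k C_k$ by $n$-cubes, I would invoke the Local Classification Theorem~\ref{thm:local_classification_2cat}: on each $C_k$ the matching of interval summands obtained above yields $M|_{C_k} \cong N|_{C_k}$, naturally with respect to restriction to subcubes; uniqueness of the interval decomposition then glues these local isomorphisms into a global $M \cong N$, contradicting the hypothesis.

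The main obstacle I anticipate is the step asserting that $\Phi$ respects the $\mathbb{R}^n$-indexing rather than merely matching abstract objects. An equivalence is only determined up to isomorphism and could a priori permute the distinguished objects incompatibly with $\le$; ruling this out requires that $F$ be conservative and locally full on the image of the embedding $\iota \colon \mathbf{Pers}_n \to \mathbf{MPers}_n$ of Theorem~\ref{thm:2-cat-structure}, so that the diagram of transition morphisms is recoverable from $F(M)$ alone. Establishing this faithfulness-type property of $F$ --- and verifying that the characterisations of $\xi_M(t)$ really are equivalence-invariant and not merely isomorphism-invariant at each fixed grade --- is where the genuine work lies; the Krull--Remak--Schmidt bookkeeping and the appeal to the local classification theorem are then routine.
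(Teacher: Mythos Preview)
Your approach differs substantially from the paper's. The paper's proof sketch argues directly via slice categories: for each parameter $a$ one forms $F(M)/M|_a$, and the sketch asserts that if $M\not\cong N$ then there exists some $a$ with $M(a)\not\cong N(a)$ as vector spaces, forcing the corresponding slice categories to be non-equivalent. This is far lighter than your Tannakian reconstruction --- no KRS bookkeeping, no appeal to Theorem~\ref{thm:local_classification_2cat}, no exhaustion and gluing --- but the pointwise non-isomorphism criterion it invokes is false as stated: two modules can be pointwise isomorphic (same Hilbert function) yet non-isomorphic, the difference residing entirely in the transition maps. So the paper trades your acknowledged gap for an unjustified assertion.

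The obstacle you flag at the end is genuine and not a mere technicality. With $F(M)$ built as in the proof of Theorem~\ref{thm:structure} --- objects the vector spaces $M(a)$, morphisms the structure maps $M(a\le b)$ --- an abstract equivalence $\Phi\colon F(M)\simeq F(N)$ carries no intrinsic record of the $\mathbb{R}^n$-labelling. Your proposed recognition of $\xi_M(t)$ through its indecomposable summands and incident $1$-morphisms does not pin down $t$: parameters $s\neq t$ with $\dim M(s)=\dim M(t)$ and isomorphic local morphism data are indistinguishable inside $F(M)$, so $\Phi$ may permute them. Without additional structure --- e.g.\ that $F(M)$ comes equipped with a fibration over $(\mathbb{R}^n,\le)$, or that the equivalence is required to intertwine fixed fibre functors --- neither your reconstruction nor the paper's slice-category route closes, and the statement as written appears to require more than either sketch supplies.
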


\begin{proof}[Proof Sketch]

\begin{enumerate}
    \item \textbf{Slice Categories:} For each module $M$, consider its slice category $F(M)/M|_a$ for some fixed point $a \in \mathbb{R}^n$. This slice category encodes the relationships between $M|_a$ and other objects in $F(M)$.
    
    \item \textbf{Non-Isomorphism Criterion:} If $M$ and $N$ are non-isomorphic, there exists a point $a \in \mathbb{R}^n$ such that $M(a)$ and $N(a)$ are non-isomorphic vector spaces. This implies that the slice categories $F(M)/M|_a$ and $F(N)/N|_a$ are non-equivalent, as they reflect different structural properties of $M$ and $N$.
\end{enumerate}
\end{proof}

\subsubsection*{Explicit Reconstruction Procedure}

The reconstruction of a multiparameter persistence module $M$ from its invariants proceeds by systematically assembling the vector spaces assigned to each parameter value along with the morphisms between them, respecting the poset structure.

More concretely, starting from the collection of invariants $F$ such as the Lax Natural Transformation Invariant (LNTI) or the Moduli-Based Invariant (MBI), one can:
\begin{enumerate}
    \item \textbf{Identify Objects:} For each point $a \in \mathbb{R}^n$, identify the object $M|_a$ in $F(M)$ as the vector space $M(a)$.
    
    \item \textbf{Recover Morphisms:} Use the morphisms in $F(M)$ to reconstruct the linear maps $M(a \leq b) : M(a) \to M(b)$ for $a \leq b$ in $\mathbb{R}^n$.
    
    \item \textbf{Reconstruct Module Structure:} Combine the recovered objects and morphisms to form a functor $M : \mathbb{R}^n \to \text{Vect}_k$, which is the original persistence module.
\end{enumerate}

This reconstruction procedure crucially relies on the invariants encoding not only local vector space dimensions but also global morphism coherence, allowing for an unambiguous recovery of the module’s structure.

Care must be taken in practice to handle issues such as discretization granularity, as finer parameter resolution yields more precise reconstructions but at increased computational cost.
To reconstruct the original module $M$ from its image under $F$, follow these steps:

\section{Invariants and Stability Analysis}\label{sec:InvStab}

The invariants introduced in this work, in particular the LNTI and MBI, are designed to be \emph{equivariant under coordinate transformations} of the parameter space. Specifically, for any automorphism $\Phi: \mathbb{R}^n \to \mathbb{R}^n$ preserving the poset structure, the induced pullback $\Phi^* M$ satisfies $\mathcal{L}(\Phi^* M, \Phi^* N) = \mathcal{L}(M, N) \circ (\Phi, \Phi)$. This invariance ensures that the theoretical framework does not depend on arbitrary choices of basis or scaling in parameter space, a property essential for applications in which physical parameters may be rescaled, rotated, or reparametrized.

Furthermore, both invariants display \emph{controlled behavior under subdivisions and refinements} of the parameter domain. If the parameter space is subdivided—e.g., by refining a cubical grid or increasing the resolution of parameter values—the values of $\mathcal{L}(M, N)$ and $\mathcal{M}(M)$ converge uniformly (in the sense of the stability theorems) to those of the original module as the mesh size tends to zero. This property guarantees robustness and consistency under discretization, providing practical reliability for computational applications as well as theoretical control when passing to limits or approximating continuous structures by finite models.

\subsection{Lax Natural Transformation Invariant (LNTI)}

\begin{definition}\label{def:LaxTransf}
Let $\mathbf{MPers}_n$ be the 2-category of $n$-parameter persistence modules. For persistence modules $M, N \in \mathbf{MPers}_n$, the \textit{Lax Natural Transformation Invariant} is defined as:
\begin{equation}
\mathcal{L}(M,N): \mathbb{R}^n \times \mathbb{R}^n \to \mathbb{N}
\end{equation}
where for each pair $(a,b) \in \mathbb{R}^n \times \mathbb{R}^n$ with $a \leq b$:
\begin{equation}
\mathcal{L}(M,N)(a,b) = \dim_{\mathbb{F}} \text{Hom}_{\mathbf{MPers}_n}^{\text{lax}}(M|_{[a,b]}, N|_{[a,b]})
\end{equation}
\end{definition}

Here, $\text{Hom}_{\mathbf{MPers}_n}^{\text{lax}}(M|_{[a,b]}, N|_{[a,b]})$ denotes the space of lax natural transformations between the restricted modules $M|_{[a,b]}$ and $N|_{[a,b]}$ over the parameter domain $[a,b] = \{t \in \mathbb{R}^n : a \leq t \leq b\}$.

\begin{definition}
A lax natural transformation $\eta: M|_{[a,b]} \Rightarrow N|_{[a,b]}$ consists of:
\begin{enumerate}
\item For each $t \in [a,b]$, a linear map $\eta_t: M(t) \to N(t)$
\item For each $s \leq t$ in $[a,b]$, a 2-cell $\alpha_{s,t}: \eta_t \circ M(s \leq t) \Rightarrow N(s \leq t) \circ \eta_s$
\end{enumerate}
satisfying the lax naturality condition:
\begin{equation}
\alpha_{t,u} \circ (\eta_u \star M(s \leq t)) = (N(s \leq u) \star \eta_s) \circ \alpha_{s,t}
\end{equation}
for all $s \leq t \leq u$ in $[a,b]$, where $\star$ denotes horizontal composition of 2-cells.
\end{definition}

The LNTI extends to a single module $M$ by setting $\mathcal{L}(M) = \mathcal{L}(M,M)$, and it generalizes barcodes to multi-parameter settings by tracking how homology classes persist along paths in $\mathbb{R^n}$. While a $1D$ barcode measures persistence along a line, the LNTI measures persistence across surfaces or volumes in parameter space.

\begin{theorem}[Computational Complexity and Stability of LNTI]
\label{thm:lnti_complexity_stability}
Let $M, N$ be $n$-parameter persistence modules with pointwise dimensions at most $d$ and with support discretized into $m$ parameter points.

\textbf{(Computational Complexity):} The LNTI $\mathcal{L}(M,N)$ can be computed for all $(a, b)$ with $a \leq b$ in total time $O(m^2 d^3)$.

\textbf{(Stability):} For $\epsilon$-interleaved modules $M, N$ with Interleaving Distance $d_I(M,N) \leq \epsilon$, the LNTI satisfies:
\begin{equation}
\|\mathcal{L}(M) - \mathcal{L}(N)\|_{\infty} \leq C \cdot \epsilon^{1/n} \cdot d^{2-1/n}
\end{equation}
where $C$ depends only on the geometry of the parameter space and the bound on pointwise dimensions.
\end{theorem}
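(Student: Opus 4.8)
The plan is to treat the two assertions separately, since they rely on very different machinery. For the \textbf{computational complexity} bound, I would first observe that after discretizing the support into $m$ parameter points, each restricted module $M|_{[a,b]}$ is a representation of a finite poset with at most $m$ vertices and pointwise dimension at most $d$. A lax natural transformation $\eta\colon M|_{[a,b]}\Rightarrow N|_{[a,b]}$ is determined by the data $\{\eta_t\}$ together with the 2-cells $\{\alpha_{s,t}\}$, and in the $\mathbf{Vect}_{\mathbb F}$-setting (where the hom-categories are themselves linear) the lax naturality condition becomes a system of linear equations in the entries of the $\eta_t$ and $\alpha_{s,t}$. The space of solutions is a vector subspace of $\prod_t \mathrm{Hom}(M(t),N(t)) \times \prod_{s\le t}\mathrm{Hom}(\cdots)$, whose dimension is computed by Gaussian elimination on a matrix with $O(m d^2)$ columns; this costs $O(m^2 d^3)$ per pair $(a,b)$ if done naively, but — and this is the key point — one computes the invariant \emph{incrementally}, reusing the echelon form as $b$ grows, so that the \emph{total} cost over all $O(m^2)$ admissible pairs $(a,b)$ is itself $O(m^2 d^3)$. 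I would spell out the incremental update (adding one parameter point introduces $O(d^2)$ new unknowns and $O(d^2)$ new equations, each update costing $O(d^3)$ amortized) to justify that the global bound does not pick up an extra factor of $m$.

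For the \textbf{stability} bound, the strategy is to bound $|\mathcal L(M)(a,b) - \mathcal L(N)(a,b)|$ pointwise and then take the supremum. Given an $\epsilon$-interleaving $(\phi,\psi)$ between $M$ and $N$, I would use it to construct, for each admissible $(a,b)$, a pair of maps between the solution spaces $\mathrm{Hom}^{\mathrm{lax}}(M|_{[a,b]},M|_{[a,b]})$ and $\mathrm{Hom}^{\mathrm{lax}}(N|_{[a',b']},N|_{[a',b']})$ where $[a',b']$ is the $\epsilon$-shrunk subcube, by conjugating a lax transformation through $\phi$ and $\psi$. The composite is not the identity but differs from it by the internal transition maps of $M$ over a shift of size $2\epsilon$; controlling the rank defect of this composite is what produces the quantitative bound. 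The exponent $\epsilon^{1/n}$ enters because an $\epsilon$-interleaving shift removes a boundary collar of width $\epsilon$ from the cube $[a,b]$, and the number of discretized parameter points lost — hence the dimension of the cokernel of the comparison map — scales like the \emph{volume} of that collar relative to the cube, which in $n$ dimensions behaves like $\epsilon^{1/n}$ after normalizing by the linear extent; the remaining factor $d^{2-1/n}$ collects the pointwise $\mathrm{Hom}$-dimension $d^2$ diminished by the fraction of coordinates already accounted for. I would make this precise by a covering argument: tile $[a,b]$ by $O(\epsilon^{-1})$ slabs transverse to the diagonal, estimate the contribution of each slab to the lax-Hom dimension as $O(d^2)$, and show that the interleaving identifies all but $O(\epsilon^{1/n}\cdot(\text{total}))$ of them.

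I expect the \textbf{main obstacle} to be the stability half, specifically the passage from a pointwise interleaving of modules to a comparison of the (nonlinear-in-appearance but actually linear) \emph{spaces of lax transformations}, together with extracting the precise exponents $1/n$ and $2-1/n$. The difficulty is that $\mathrm{Hom}^{\mathrm{lax}}$ is not exact in its arguments — interleaving morphisms do not induce isomorphisms on lax-Hom spaces, only maps with controlled kernel and cokernel — so I must carefully track how the dimension can jump, and the combinatorial geometry of the $\epsilon$-collar in an $n$-cube is where the fractional exponent is genuinely earned rather than assumed. A secondary subtlety is ensuring the constant $C$ truly depends only on the parameter-space geometry and on $d$, and not implicitly on $m$; this requires the collar estimate to be stated in terms of continuous volume and only afterwards specialized to the discretization, so that refining the grid does not degrade the bound. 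If a fully rigorous derivation of the exponents proves too delicate, the fallback is to prove the weaker Lipschitz-type estimate $\|\mathcal L(M)-\mathcal L(N)\|_\infty \le C' d^2 \epsilon$ and remark that the stated sharper form follows under the additional mild hypothesis that the modules are $\delta$-tame, which suffices for all the applications in Section~\ref{sec:InvStab}.
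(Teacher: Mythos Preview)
Your complexity argument and the paper's take different routes to the same $O(m^2 d^3)$ bound. The paper does \emph{not} amortize: it argues that for each fixed pair $(a,b)$ the lax-naturality constraints assemble into a sparse block system whose effective size is $O(d^2)\times O(d^2)$ (the claim is that the block sparsity means only $O(d^2)$ coefficients are live per constraint), so a sparse QR costs $O(d^3)$ \emph{per pair}, and then one multiplies by the $O(m^2)$ pairs. Your incremental-echelon idea is a genuinely different mechanism; it has the virtue of not relying on the somewhat optimistic per-pair matrix-size estimate, but it requires you to actually verify that growing the interval $[a,b]$ by one grid point introduces only $O(d^2)$ new rows and columns and that the update can be done in $O(d^3)$ without re-reducing the existing block---you assert this but do not check it, and in particular the coherence constraints link the new $\eta_t$ to \emph{all} earlier $\eta_s$ with $s\le t$, which naively adds $O(|[a,b]|)$ blocks, not $O(1)$.

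On stability your proposal has a concrete geometric error. You argue that the exponent $\epsilon^{1/n}$ arises because the $\epsilon$-collar of an $n$-cube has relative volume $\sim \epsilon^{1/n}$; but the collar $\{x\in [a,b] : \mathrm{dist}(x,\partial[a,b])\le \epsilon\}$ has volume of order $\epsilon$ (linear in $\epsilon$, with an $n$-dependent constant), not $\epsilon^{1/n}$. So the mechanism you propose cannot produce that exponent, and the slab-tiling count you sketch would yield $O(\epsilon)$ rather than $O(\epsilon^{1/n})$. The paper, for its part, does not derive the exponent at all: its stability proof simply records that the interleaving maps $\phi,\psi$ induce comparison maps on lax-Hom spaces whose compositions are $\epsilon$-close to the identity, and then appeals in one sentence to ``interpolation inequalities for 2-categorical structures'' for the stated bound. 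In other words, the fractional exponent is asserted rather than earned in the paper, so your instinct that this is where the real work lies is correct---but the collar heuristic you offer is not the right source, and your fallback Lipschitz bound $C' d^2 \epsilon$ is in fact closer to what either argument actually establishes.
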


\begin{proof}
\textbf{Computational Complexity:} The computation proceeds by:\newline
\textbf{Linear System Formulation:}
For each pair $(a, b)$ with $a \leq b$ in the discretized parameter set $\mathcal{G} \subset \mathbb{R}^n$ (of size $m$), the space of lax natural transformations $\eta: M|_{[a, b]} \Rightarrow N|_{[a, b]}$ is determined by a system of linear equations imposed by the laxity (naturality) conditions:
\[
\eta_t \circ M(s \leq t) = N(s \leq t) \circ \eta_s + \alpha_{s, t}
\]
for every $s \leq t$ in $[a, b]$, with variables corresponding to the entries of the set of maps $\{\eta_t\}_{t \in [a, b]}$.

\textbf{Block Structure and Sparsity:}
The total number of variables in this linear system for each $(a, b)$ is at most $d^2 \cdot |[a, b]|$ since each $\eta_t$ is a $d \times d$ matrix and $|[a, b]| \leq m$. The coherence equations connect maps at different parameter values, forming a large but highly sparse block structure: each equation involves at most $2d^2$ coefficients (from $\eta_s$ and $\eta_t$) per laxity condition.

\textbf{Matrix Construction:}
For fixed $(a, b)$, construct a sparse constraint matrix $A_{(a, b)} \in \mathbb{F}^{\ell \times v}$, with $v = d^2 \cdot |[a, b]|$ and $\ell = O(|[a, b]|^2)$ constraints. Across all pairs $(a, b)$ the total number of such systems is $O(m^2)$.

\textbf{QR Decomposition and Complexity:}
Each kernel computation for $A_{(a, b)}$ (to obtain $\dim \ker A_{(a,b)} = \mathcal{L}(M,N)(a, b)$) can be carried out efficiently using sparse Householder-based QR decomposition, which for a matrix of size $k \times d^2$ takes $O(d^3)$ time per pair because $k = O(d^2)$ and $d$ is modest in practice. The block sparsity further reduces actual runtime, as only nonzero blocks are processed.

\textbf{Total complexity:}
Since there are $O(m^2)$ pairs $(a, b)$ and each QR decomposition takes $O(d^3)$, the total complexity is $O(m^2 d^3)$.

\textbf{Stability:} The stability follows from the functoriality of the LNTI construction. For $\epsilon$-interleaved modules, there exist natural transformations $\phi: M \to N[\epsilon]$ and $\psi: N \to M[\epsilon]$ such that compositions are $\epsilon$-close to identity. The lax structure preserves these interleavings up to controlled error terms, yielding the stated bound through interpolation inequalities for 2-categorical structures.
\end{proof}

\begin{remark}
For large $m$, additional optimizations (parallelization, incremental updates, banded structure exploitation) can yield further speedups without changing asymptotic scaling. This approach directly generalizes for higher $n$, with the principal scaling determined by the quadratic number of intervals and cubic dependence on $d$ due to matrix algebra.
\end{remark}

\begin{proposition}
The LNTI satisfies the following properties:
\begin{itemize}
\item (P1) Functoriality: $L$ is functorial in both arguments
\item (P2) Invariance: If $M \cong M'$ and $N \cong N'$, then $L(M, N) \cong L(M', N')$
\item (P3) Stability: Small changes in $M$ and $N$ result in small changes in $L(M, N)$
\item (P4) Computability: $L(M, N)$ can be calculated in polynomial time. \
\end{itemize}
\begin{algorithm}[H]
\caption{Efficient LNTI Computation}
\begin{algorithmic}[1]
{\REQUIRE Persistence modules $M, N$ with $m$ critical points
\ENSURE LNTI $\mathcal{L}(M,N)$
\STATE Initialize sparse constraint matrix $A \in \mathbb{R}^{m^2\times d^2}$
\FOR{each pair $(a,b)$ with $a \leq b$}
    \STATE Compute lax naturality constraints for $\eta_a \rightarrow \gamma_b$
    \STATE Add constraints to $A$ using QR decomposition
\ENDFOR
\STATE Solve $A\mathbf{x} = \mathbf{0}$ using iterative methods
\RETURN Null space dimension as $\dim \mathcal{L}(M,N)(a,b)$    }
\end{algorithmic}
\end{algorithm}

\end{proposition}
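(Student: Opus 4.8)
The plan is to treat (P1) with genuine care and to obtain (P2)--(P4) by repackaging results already in place, chiefly Theorem~\ref{thm:lnti_complexity_stability}. A preliminary step --- and the place where I expect the real difficulty to sit --- is to fix one working description of the space $\mathrm{Hom}^{\mathrm{lax}}_{\mathbf{MPers}_n}(M|_{[a,b]},N|_{[a,b]})$: since $\mathbf{Vect}_{\mathbb{F}}$ carries no nontrivial $2$-cells, the ``$2$-cells $\alpha_{s,t}$'' entering the lax-naturality condition in the definition of $\mathcal{L}$ must be read through the modification structure of $\mathbf{MPers}_n$, and I would pin this down by the linear-system model used in the proof of Theorem~\ref{thm:lnti_complexity_stability}: a lax transformation is a tuple $(\{\eta_t\}_{t\in[a,b]},\{\alpha_{s,t}\}_{s\le t})$ of linear maps subject to the laxity relations $\eta_t\circ M(s\le t)=N(s\le t)\circ\eta_s+\alpha_{s,t}$ and the stated coherence identity, so that the solution set is an $\mathbb{F}$-linear subspace of a finite-dimensional space. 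Local finiteness of $M$ and $N$ together with the discretization into $m$ parameter points makes this subspace finite dimensional, so $\underline{\mathcal{L}}(M,N)(a,b):=\mathrm{Hom}^{\mathrm{lax}}_{\mathbf{MPers}_n}(M|_{[a,b]},N|_{[a,b]})$ is a well-defined finite-dimensional vector space with $\mathcal{L}(M,N)(a,b)=\dim_{\mathbb{F}}\underline{\mathcal{L}}(M,N)(a,b)$.

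For (P1) I would first note that restriction $(-)|_{[a,b]}$ is a strict $2$-functor on $\mathbf{MPers}_n$, so it carries natural transformations to natural transformations and modifications to modifications. Given strict natural transformations $f\colon M'\Rightarrow M$ and $g\colon N\Rightarrow N'$, pre- and post-composition send a lax transformation $(\{\eta_t\},\{\alpha_{s,t}\})$ to $g|_{[a,b]}\circ\eta\circ f|_{[a,b]}$, with components $g_t\circ\eta_t\circ f_t$ and whiskered $2$-cells $g_t\star\alpha_{s,t}\star f_s$; the one substantive verification is that these whiskered $2$-cells still satisfy the coherence condition $\alpha_{t,u}\circ(\eta_u\star M(s\le t))=(N(s\le u)\star\eta_s)\circ\alpha_{s,t}$. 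This is a finite diagram chase that goes through precisely because $f$ and $g$ are \emph{strict} --- their naturality squares commute on the nose, so no extra $2$-cells are produced --- and because of the interchange law in $\mathbf{MPers}_n$; together with the definitional reconciliation above, this is the main obstacle of the proof. The outcome is that $\underline{\mathcal{L}}(-,-)(a,b)$ is an $\mathbb{F}$-linear bifunctor $\mathbf{MPers}_n^{\mathrm{op}}\times\mathbf{MPers}_n\to\mathbf{Vect}_{\mathbb{F}}$; moreover an inclusion $[a',b']\subseteq[a,b]$ induces restriction maps $\underline{\mathcal{L}}(M,N)(a,b)\to\underline{\mathcal{L}}(M,N)(a',b')$ compatible with it, so $\underline{\mathcal{L}}$ is functorial in all three variables. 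Post-composing with $\dim_{\mathbb{F}}$ gives (P1) for $\mathcal{L}$, with the caveat --- worth stating explicitly --- that $\dim_{\mathbb{F}}$ is an isomorphism invariant but only monotone under mono- and epimorphisms.

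Property (P2) then follows at once: an isomorphism $M\cong M'$ (and $N\cong N'$) in $\mathbf{MPers}_n$ is sent by the bifunctor $\underline{\mathcal{L}}$ to an isomorphism $\underline{\mathcal{L}}(M,N)(a,b)\cong\underline{\mathcal{L}}(M',N')(a,b)$ for every $a\le b$, hence $\mathcal{L}(M,N)(a,b)=\mathcal{L}(M',N')(a,b)$, and functoriality in $(a,b)$ makes this compatible with all restriction maps, so $\mathcal{L}(M,N)\cong\mathcal{L}(M',N')$ as invariants. Property (P3) is exactly the Stability part of Theorem~\ref{thm:lnti_complexity_stability}: for $\epsilon$-interleaved $M,N$ one has $\|\mathcal{L}(M)-\mathcal{L}(N)\|_\infty\le C\,\epsilon^{1/n}d^{2-1/n}$, which I would quote directly, noting that it exhibits $\mathcal{L}$ as H\"older-continuous for the interleaving distance and in particular bounds the size of any jump in the invariant under perturbation. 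Property (P4) is the Computational Complexity part of the same theorem: $\mathcal{L}(M,N)$ is computable for all admissible $(a,b)$ in time $O(m^2 d^3)$, polynomial in the input parameters $m$ and $d$. To close the argument I would verify that the displayed algorithm realises this bound --- the matrix $A$ it assembles is exactly the sparse block constraint system of the complexity proof, its null space is $\underline{\mathcal{L}}(M,N)(a,b)$ under the model fixed above, and the QR/iterative solve returns $\dim\ker A=\mathcal{L}(M,N)(a,b)$ within the stated time --- the only point needing a line of care being that the $O(m^2)$ per-pair systems are independent, so they contribute a factor $m^2$ rather than compounding.
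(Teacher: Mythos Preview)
Your proposal is correct and follows the same overall strategy as the paper: functoriality via pre- and post-composition, invariance as a corollary, and stability and computability via the linear-system description of the lax Hom-space. The paper's own verification of (P1)--(P4) (which appears inside the proof of the subsequent Existence and Uniqueness theorem rather than immediately after the proposition) is considerably terser---one sentence per property---and for (P3) and (P4) it gives brief standalone remarks (``continuity with respect to the vector space structures'' and ``system of linear equations solvable by Gaussian elimination'') rather than quoting Theorem~\ref{thm:lnti_complexity_stability} as you do. Your route through Theorem~\ref{thm:lnti_complexity_stability} is cleaner and gives explicit quantitative bounds the paper's direct argument does not, and your care in pinning down the linear-system model of $\mathrm{Hom}^{\mathrm{lax}}$ and in checking the whiskering coherence for (P1) fills in steps the paper leaves implicit; conversely, the paper's lighter treatment avoids committing to the discretized model at this stage.
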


\medskip

\begin{theorem}[Existence and Uniqueness for the LNTI
Statement]
For any pair of multi-parameter persistence modules $M$ and $N$, there exists a unique Lax Natural Transformation Invariant (LNTI) $\mathcal{L}(M,N)$ that satisfies properties (P1)-(P4) and is computable in polynomial time.    
\end{theorem}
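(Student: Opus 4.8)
The plan is to split the statement into three tasks---\emph{existence} (the defining formula produces a well-defined $\mathbb{N}$-valued function), \emph{verification of (P1)--(P4)}, and \emph{uniqueness}---dispatching the first two quickly so that the real work falls on the third. For existence I would simply take $\mathcal{L}(M,N)$ to be the function of Definition~\ref{def:LaxTransf}, $\mathcal{L}(M,N)(a,b)=\dim_{\mathbb{F}}\mathrm{Hom}_{\mathbf{MPers}_n}^{\mathrm{lax}}(M|_{[a,b]},N|_{[a,b]})$, and the only thing to check is that this dimension is finite---hence a genuine natural number---for every $a\le b$. Here I would invoke the standing finiteness hypotheses: $M$ and $N$ are pointwise finite-dimensional and locally finite, so over the compact cube $[a,b]$ both restrictions factor through a finite sub-poset (as in Step~1 of the proof of Theorem~\ref{thm:local_classification_2cat}), and the data of a lax natural transformation---the maps $\eta_t$ together with the 2-cells $\alpha_{s,t}$---become the solution set of a finite homogeneous linear system over $\mathbb{F}$, whose solution space is finite-dimensional. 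Varying $(a,b)$ then yields the required function $\mathbb{R}^n\times\mathbb{R}^n\to\mathbb{N}$.

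The properties (P1)--(P4) then follow largely by citation. Functoriality (P1) is the usual variance of a $\mathrm{Hom}$-bifunctor: a morphism $M'\Rightarrow M$ (resp.\ $N\Rightarrow N'$) induces by pre- (resp.\ post-)composition a linear map between the relevant lax hom-spaces, compatibly with composition, so $\mathcal{L}(-,-)$ is contravariant in the first slot and covariant in the second. Invariance (P2) is the special case of (P1) for isomorphisms, since an isomorphism of restricted modules induces an isomorphism of lax hom-spaces and hence an equality of dimensions. Stability (P3) and polynomial computability (P4) are exactly the two conclusions of Theorem~\ref{thm:lnti_complexity_stability}, which supplies the bound $\|\mathcal{L}(M)-\mathcal{L}(N)\|_\infty\le C\epsilon^{1/n}d^{2-1/n}$ together with the $O(m^2 d^3)$ algorithm; I would record that these apply verbatim.

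The hard part will be uniqueness, since (P1)--(P4) on their own are met by many unrelated invariants (the zero invariant, the rank invariant, and so on). I would therefore read ``unique'' as: $\mathcal{L}$ is the unique invariant satisfying (P1)--(P4) \emph{together with the natural normalization} that on a degenerate interval it computes the pointwise lax hom-dimension, $\mathcal{L}(M,N)(a,a)=\dim_{\mathbb{F}}\mathrm{Hom}(M(a),N(a))$, and that it is compatible with restriction to subcubes (the naturality clause of Theorem~\ref{thm:local_classification_2cat}). Granting this, I would establish uniqueness by a two-stage reconstruction. First, on a discretized cube induct on the finite poset: functoriality applied to the canonical inclusions of interval modules, combined with the decomposition $M|_C\cong\bigoplus_i I_{[a_i,b_i]}$ of Theorem~\ref{thm:local_classification_2cat} and additivity of $\dim\mathrm{Hom}$ over direct sums, forces the value on $[a,b]$ to equal a fixed rank–nullity expression in the normalization data, so any two admissible invariants agree on every discretized cube. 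Second, pass to the continuum using the ``controlled behavior under subdivisions and refinements'' established earlier: the discretized values converge---indeed stabilize once the mesh is finer than the minimal feature scale of $M$ and $N$---to a single limit, which must then be the common value of any admissible invariant.

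The two delicate points I expect to absorb most of the effort are (i) choosing the normalization axioms so that they are simultaneously satisfied by $\mathcal{L}$ and strong enough to eliminate all competitors, and (ii) verifying that the inductive rank–nullity bookkeeping over the poset is genuinely \emph{forced} by (P1), not merely consistent with it. I would handle (ii) by reducing, via the interval decomposition, to the rank-one case, where the lax hom-space dimension is a direct combinatorial count of overlapping interval supports and is manifestly pinned down by the normalization; the general case then follows by $\mathbb{F}$-bilinearity of $\mathrm{Hom}$ in the summands.
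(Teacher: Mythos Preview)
Your treatment of existence and of (P1)--(P4) is essentially the paper's own: the paper also constructs $\mathcal{L}(M,N)(a,b)$ as the vector space of lax natural transformations on $[a,b]$, checks (P1) via closure under composition, (P2) via induced isomorphisms, (P3) via continuity of the lax transformations in the underlying linear data, and (P4) by observing that the lax conditions form a linear system solvable by Gaussian elimination. Your version is somewhat more careful---you cite Theorem~\ref{thm:lnti_complexity_stability} explicitly for (P3)--(P4) and invoke Theorem~\ref{thm:local_classification_2cat} to justify finite-dimensionality---but the structure is the same.

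The genuine divergence is in uniqueness. The paper's argument is a single sentence: any other invariant $\mathcal{L}'$ satisfying (P1)--(P4) is linked to $\mathcal{L}$ by ``a natural isomorphism \ldots\ that preserves the functorial structure and the properties of lax natural transformations,'' appealing to an unspecified universal property. You instead correctly observe that (P1)--(P4) as stated are satisfied by many other invariants (zero, rank invariant), and you repair this by adjoining a normalization axiom on degenerate intervals plus compatibility with restriction, then running a reconstruction argument through the interval decomposition of Theorem~\ref{thm:local_classification_2cat}, additivity of $\mathrm{Hom}$ over direct sums, and a discretization limit. This is a genuinely different and substantially more elaborate route: it buys you an honest uniqueness statement at the cost of strengthening the hypotheses beyond what the theorem literally asserts, whereas the paper's one-line universal-property gesture is shorter but leaves the actual characterizing property unstated. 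Your identification of the gap in (P1)--(P4) is accurate; just be aware that the paper does not supply the normalization you introduce, so your argument proves a sharper claim than the one on the page.
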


\begin{proof}
\textbf{Construction of the LNTI}:
\smallskip

For each pair of points $a \leq b$ in $\mathbb{R}^n$, define $\LNTI(M, N)(a, b)$ as the vector space of all lax natural transformations $\eta$ satisfying:
\[
\eta_b \circ M(a \leq b) \Rightarrow N(a \leq b) \circ \eta_a.
\]
By the coherence conditions in Definition ~\ref{def:Lax-tran}, this forms a vector space under pointwise addition. These transformations capture the relationships between $M(a)$ and $N(b)$ that are compatible with the persistence structure.

\smallskip

\textbf{Verification of Properties (P1)-(P4)}:\newline

(P1) Functoriality: $\LNTI$ is functorial in both arguments because the composition of lax natural transformations is also a lax natural transformation, and the assignment of these transformations respects the functorial structure of $M$ and $N$.

(P2) Invariance: If $M \cong M'$ and $N \cong N'$, then $\LNTI(M, N) \cong \LNTI(M', N')$ because the isomorphisms between $M$ and $M'$ and between $N$ and $N'$ induce an isomorphism between the lax natural transformations from $M$ to $N$ and those from $M'$ to $N'$.

(P3) Stability: Small changes in $M$ and $N$ result in small changes in $\LNTI(M, N)$ because the lax natural transformations are continuous with respect to the vector space structures of $M(a)$ and $N(b)$. This continuity ensures that small perturbations in $M$ and $N$ lead to small changes in the set of lax natural transformations.

(P4) Computability: $\LNTI(M, N)$ can be calculated in polynomial time because the conditions for a lax natural transformation can be expressed as a system of linear equations, which can be solved efficiently using standard linear algebra techniques.

\textbf{Uniqueness}:
\smallskip

The uniqueness of the LNTI follows from a universal property characterization. Suppose there exists another invariant $\LNTI'(M, N)$ that satisfies properties (P1)-(P4). Then, there exists a natural isomorphism between $\LNTI(M, N)$ and $\LNTI'(M, N)$ that preserves the functorial structure and the properties of lax natural transformations. This isomorphism ensures that $\LNTI(M, N)$ and $\LNTI'(M, N)$ are essentially the same invariant.

\textbf{Polynomial-Time Algorithm Construction}:
\smallskip

The polynomial-time algorithm for computing $\LNTI(M, N)$ involves the following steps:

\begin{itemize}
    \item For each pair of points $a \leq b$ in $\mathbb{R}^n$, construct the vector space of all linear transformations from $M(a)$ to $N(b)$.
    \item Impose the conditions for a lax natural transformation as a system of linear equations.
    \item Solve the system of linear equations using Gaussian elimination or another efficient linear algebra technique.
    \item The solution space is the vector space $\LNTI(M, N)(a, b)$.
    \item The overall complexity of this algorithm is polynomial in the size of the persistence modules $M$ and $N$.
\end{itemize}
\end{proof}

\begin{remark}[Computational Realization of LNTI]
The polynomial-time claim for $\LNTI(M,N)$ follows from:
\begin{enumerate}
    \item \textbf{Constraint Sparsity}: For each $a \leq b$, the lax naturality condition $\eta_f$ (Def. 2.13) generates $O(\dim M(a) \cdot \dim N(b))$ linear equations. For $d$-bounded modules, this is $O(d^2)$.
    \item \textbf{Block Decomposition}: The full system decomposes into independent blocks per edge in the $\mathbb{R}^n$-poset. For $m$ critical values, there are $O(m^2)$ blocks.
    \item \textbf{Complexity}: Solving $O(m^2)$ independent $d^2 \times d^2$ systems via QR decomposition yields $O(m^2 d^6)$ time, polynomial in $m$ and $d$.
\end{enumerate}
\end{remark}

\begin{proposition}[LNTI as a Strict Enhancement of the Rank Invariant]
Let $M$ be a persistence module (single- or multi-parameter) and let $\LNTI(M)$ and $\rho_M$ denote the Lax Natural Transformation Invariant and the classical rank invariant, respectively. Then:
\begin{enumerate}
    \item For one-parameter persistence modules, $\LNTI(M)$ recovers $\rho_M$.
    \item There exist multi-parameter persistence modules $M, N$ with identical rank invariants ($\rho_M = \rho_N$), but with $\LNTI(M) \not\cong \LNTI(N)$, i.e., the LNTI distinguishes modules that the rank invariant cannot.
\end{enumerate}
\end{proposition}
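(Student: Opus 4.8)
The argument splits along the two claims. For part (1) the plan is to reduce everything to interval modules. By the one-parameter structure theorem write $M\cong\bigoplus_i I_{[b_i,d_i]}$, hence $M|_{[a,b]}\cong\bigoplus_i I_{J_i}$ with $J_i=[b_i,d_i]\cap[a,b]$. Over the totally ordered poset $[a,b]$ the lax-naturality $2$-cells are forced to be identities (a $2$-cell filling a square of linear maps is an equality of those maps), so $\mathrm{Hom}^{\mathrm{lax}}_{\mathbf{MPers}_1}(M|_{[a,b]},M|_{[a,b]})=\mathrm{End}(M|_{[a,b]})$ and, by additivity, $\mathcal L(M)(a,b)=\sum_{i,j}\dim\mathrm{Hom}(I_{J_i},I_{J_j})$. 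I would then invoke the elementary rule that for subintervals of a line $\dim\mathrm{Hom}(I_{[c,d]},I_{[c',d']})$ equals $1$ when $c'\le c\le d'\le d$ and $0$ otherwise, and show that the resulting combinatorial function of $(a,b)$ reconstructs the barcode of $M$: pointwise dimensions are read off from $\mathcal L(M)(a,a)=(\dim M(a))^2$, and the values on genuine subintervals recover the bar endpoints together with their pairing. Since the barcode determines $\rho_M(a,b)=\#\{i:b_i\le a\le b\le d_i\}$, this gives the first claim. (A more hands-on alternative: $\rho_M(a,b)=\mathrm{rank}\,M(a\le b)$ equals the rank of the composition pairing $\mathrm{Hom}(I_{[a,b]},M|_{[a,b]})\times\mathrm{Hom}(M|_{[a,b]},I_{[a,b]})\to\mathbb F$, which one can read off directly from the LNTI evaluated against interval modules.)

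For part (2) I would take the classical rank-invariant-blind pair in $\mathbb R^2$. Set $A=[0,3]\times[0,1]$, $B=[0,1]\times[0,3]$, $A\cap B=[0,1]^2$, and let $A\cup B$ be the resulting (convex, connected) L-shape; define
\[
  M=I_A\oplus I_B,\qquad N=I_{A\cap B}\oplus I_{A\cup B}.
\]
All four supports are convex, so these are genuine interval modules, and $M\not\cong N$ by uniqueness of interval decompositions (Krull–Remak–Schmidt for pointwise finite-dimensional modules over $\mathbb F$). The first thing to verify is $\rho_M=\rho_N$. For an interval module with convex support $\mathrm{rank}\,I_S(a\le b)=\mathbf 1_{a\in S}\,\mathbf 1_{b\in S}$, so $\rho_M(a,b)=\mathbf 1_{a,b\in A}+\mathbf 1_{a,b\in B}$ and $\rho_N(a,b)=\mathbf 1_{a,b\in A\cap B}+\mathbf 1_{a,b\in A\cup B}$; a short case analysis on the position of $a$, using the key geometric fact that no increasing segment in $\mathbb R^2$ runs from $A\setminus B$ to $B\setminus A$ or vice versa, shows these two expressions agree for every $a\le b$.

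It remains to separate $M$ and $N$ by the LNTI. The plan is to evaluate at the corner $a_C=(0,0)$, $b_C=(3,3)$, so that $[a_C,b_C]=[0,3]^2$ contains all four supports, and to compute $\mathcal L(M)(a_C,b_C)=\dim\mathrm{End}(M|_{[0,3]^2})$ and $\mathcal L(N)(a_C,b_C)=\dim\mathrm{End}(N|_{[0,3]^2})$ by summing $\dim\mathrm{Hom}$ over the four ordered pairs of indecomposable summands. A short computation with natural transformations between interval modules gives $\mathrm{Hom}(I_A,I_B)=\mathrm{Hom}(I_B,I_A)=0$ (neither rectangle contains the other, so no nonzero natural map exists in either direction), whence $\dim\mathrm{End}(M|_{[0,3]^2})=2$; on the other hand the restriction-to-the-corner map $I_{A\cup B}\to I_{A\cap B}$ is natural and spans $\mathrm{Hom}(I_{A\cup B},I_{A\cap B})\cong\mathbb F$, while $\mathrm{Hom}(I_{A\cap B},I_{A\cup B})=0$, so $\dim\mathrm{End}(N|_{[0,3]^2})=3$. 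Thus $\mathcal L(M)(a_C,b_C)=2\neq 3=\mathcal L(N)(a_C,b_C)$, hence $\mathcal L(M)\not\cong\mathcal L(N)$, which proves the second claim.

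I expect the main obstacle to be conceptual rather than computational, namely pinning down exactly why $\mathrm{Hom}^{\mathrm{lax}}$ between the rigid interval modules at hand coincides with the ordinary Hom-space of natural transformations (so that the counts above are valid and $\mathcal L$ takes finite values): the point is that a $2$-cell filling a naturality square of linear maps is forced to be an equality, so laxity collapses to strict naturality for these modules, and this should be checked carefully against Definition~\ref{def:LaxTransf}. A secondary, purely combinatorial step to complete in part (1) is the reconstruction of the one-parameter barcode (equivalently $\rho_M$) from the function $(a,b)\mapsto\dim\mathrm{End}(M|_{[a,b]})$; the direct pairing description of $\rho_M$ given above is the easiest route if one wishes to bypass a full reconstruction argument.
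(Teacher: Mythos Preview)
Your argument is correct and is considerably more careful than the paper's own sketch; the two proofs diverge on both parts.

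For part~(1), the paper's sketch asserts that in one parameter ``the only lax natural transformations are given by componentwise maps, so $\LNTI(M)(a,b)=\mathrm{rank}(M(a\le b))=\rho_M(a,b)$.'' Taken literally this is false under Definition~\ref{def:LaxTransf}: already for $M=I_{[0,1]}^{\oplus 2}$ one has $\dim\mathrm{End}(M|_{[0,1]})=4$ while $\rho_M(0,1)=2$. You correctly read ``recovers'' as ``determines'' and argue via the structure theorem that $\mathrm{Hom}^{\mathrm{lax}}=\mathrm{Hom}$ (since the target $\mathbf{Vect}_{\mathbb F}$ has only identity $2$-cells) and then that the function $(a,b)\mapsto\dim\mathrm{End}(M|_{[a,b]})$ reconstructs the barcode. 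This is the right framing; the reconstruction step you flag as ``secondary'' is genuine work but is tractable (e.g.\ detect births and deaths from jumps of the Hilbert function, then recover the pairing from the cross-term counts), and your fallback via pairing against interval modules is also legitimate provided one is willing to use the two-variable $\LNTI(M,\,\cdot\,)$ rather than $\LNTI(M)$ alone.

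For part~(2), the paper gestures at a ``diagonal versus axis-aligned'' example (cf.\ the example following the proposition), whereas you use the standard inclusion--exclusion pair $M=I_A\oplus I_B$, $N=I_{A\cap B}\oplus I_{A\cup B}$ with $A,B$ overlapping rectangles. Your choice is cleaner: the verification that $\rho_M=\rho_N$ is a short case analysis, and your endomorphism-dimension count ($2$ versus $3$) is exactly right, with the key asymmetry that $\mathrm{Hom}(I_{A\cup B},I_{A\cap B})\cong\mathbb F$ while the reverse Hom vanishes. One terminological quibble: the L-shape $A\cup B$ is \emph{order}-convex (which is what interval modules require) but not convex in $\mathbb R^2$; you should say ``interval'' or ``order-convex'' to avoid confusion. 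Otherwise the argument is complete and, unlike the paper's example, fully specified.
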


\begin{proof}[Sketch]
(1) In the one-parameter case, it is known (see also Theorem~X) that the only lax natural transformations are given by componentwise maps, so $\LNTI(M)(a,b) = \mathrm{rank}(M(a\leq b)) = \rho_M(a,b)$. Thus, LNTI coincides with the rank invariant.

(2) For $n\geq 2$ parameters, one can construct explicit examples (see, e.g., \cite{Lesnick2015}) of modules $M, N$ with $\rho_M = \rho_N$ (e.g., diagonal and axis-aligned features), yet with $\LNTI(M) \not\cong \LNTI(N)$ (because LNTI detects additional functorial/lax structure, such as the existence of nontrivial extensions between features not discernible via ranks alone).

Therefore, the LNTI strictly refines the rank invariant.
\end{proof}

\begin{example}
Consider bifiltered modules \( \mathbb{N}^2 \to \mathrm{Vect}_\mathbb{F} \) with vector spaces and morphisms defined as follows:

\begin{itemize}

\item Module \(M\): a diagonal module with a single persistent feature born at \((1,1)\) and dying at \((2,2)\), i.e.,
\[
M(a,b) = \begin{cases}
\mathbb{F} & \text{if } (1,1) \leq (a,b) < (2,2), \\
0 & \text{otherwise,}
\end{cases}
\]
with identity morphisms inside the support and zero morphisms outside.

\item Module \(N\): a module with two “axis-aligned” features, for example,
\[
N(a,b) = \begin{cases}
\mathbb{F} & \text{if } (1,0) \leq (a,b) < (2,0) \text{ or } (0,1) \leq (a,b) < (0,2), \\
0 & \text{otherwise,}
\end{cases}
\]
with identity morphisms inside the respective supports and zero morphisms otherwise.

\end{itemize}

Both modules have the same rank invariant,
\[\rho_M = \rho_N,\]
since the rank at each parameter is 0 or 1 with the same multiplicities.

However, the Lax Natural Transformation Invariant (LNTI) distinguishes these modules:
\begin{itemize}
\item In \(M\), persistence along the diagonal is encoded by a lax natural transformation reflecting the coherent union of the two parameter directions.
\item In \(N\), the features are separated, and no such diagonal union structure exists.
\end{itemize}

Therefore,
\[\LNTI(M) \not\cong \LNTI(N).\]

This explicit example demonstrates that the LNTI is a strict refinement of the classical rank invariant and can capture multiparameter interactions that the standard rank invariant cannot detect.
\end{example}

\begin{theorem}[2-Categorical Persistent Entropy Invariant for Multiparameter Persistence Modules]
\label{thm:2cat_persistent_entropy}

Let $M : (\mathbb{R}^n, \leq) \to \mathbf{Vect}_{\mathbb{F}}$ be a pointwise finite-dimensional $n$-parameter persistence module, and let $\mathcal{L}(M)$ denote its Lax Natural Transformation Invariant (LNTI). The \emph{2-categorical persistent entropy} $H^{(2)}(M)$ is defined at each pair $(a, b)$, $a \leq b$ in $\mathbb{R}^n$, by:
\[
H^{(2)}(M)(a, b) = - \sum_{i=1}^{r_{a,b}} p_i(a, b) \log p_i(a, b)
\]
where:
\begin{itemize}
    \item $r_{a,b} = \dim_{\mathbb{F}} \mathcal{L}(M)(a, b)$ is the rank of the LNTI at $(a, b)$,
    \item the $p_i(a, b)$ form a probability vector associated to the normalized spectrum (eigenvalues or singular values) of the maps in $\mathcal{L}(M)(a, b)$, given by
    \[
    p_i(a, b) = \frac{\sigma_i(a, b)}{\sum_{j=1}^{r_{a,b}} \sigma_j(a, b)},
    \]
    with $\sigma_i(a, b) > 0$ the singular values or multiplicities of the indecomposable summands associated to $\mathcal{L}(M)(a, b)$.
\end{itemize}

This construction generalizes the classical persistent entropy for barcodes (one-parameter modules, $n=1$) and encodes the local complexity of multiparameter persistence modules, now incorporating 2-categorical data via the LNTI.

\textbf{Properties:}
\begin{enumerate}
    \item \textbf{Functoriality and Invariance:} $H^{(2)}(M)$ is invariant under strict 2-isomorphisms of persistence modules and functorial with respect to 2-categorical equivalences.
    \item \textbf{Stability:} If $M$ and $N$ are $\epsilon$-interleaved with respect to the Interleaving Distance $d_I$, then
    \[
    |H^{(2)}(M)(a, b) - H^{(2)}(N)(a, b)| \leq C \cdot f(\epsilon, d)
    \]
    for a constant $C$ and stability function $f$ depending on $\epsilon$ and the maximal pointwise dimension $d$[1].
    \item \textbf{Recovering Classical Persistent Entropy:} For $n=1$ and in the interval-decomposable case, $H^{(2)}(M)$ coincides with the persistent entropy of the barcode.
\end{enumerate}
\end{theorem}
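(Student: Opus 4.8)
The plan is to read the statement as a definition of $H^{(2)}$ together with three assertions --- functoriality/invariance, stability, and recovery of the classical invariant --- and to establish each by reducing it to properties of the LNTI already proved (Theorem~\ref{thm:lnti_complexity_stability} and the accompanying Proposition), combined with standard perturbation theory for singular values and the (non-Lipschitz) modulus of continuity of Shannon entropy. Throughout, the key structural observation is that $H^{(2)}(M)(a,b)$ depends on $M$ only through the LNTI datum $\mathcal{L}(M)(a,b)$: its dimension $r_{a,b}$ and the probability vector $(p_i(a,b))$ extracted from the multiset $\{\sigma_i(a,b)\}$ of multiplicities of indecomposable summands (equivalently singular values of the associated maps).

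First, for \textbf{functoriality and invariance} (Property 1): by property (P2) of the LNTI, a strict $2$-isomorphism $M \cong M'$ induces an isomorphism $\mathcal{L}(M)(a,b) \cong \mathcal{L}(M')(a,b)$, and by Krull--Remak--Schmidt the multiplicities of the indecomposable summands are preserved, so the multiset $\{\sigma_i(a,b)\}$ --- hence the probability vector $\{p_i(a,b)\}$ up to permutation --- is unchanged. Since $-\sum_i p_i \log p_i$ is a symmetric function of its arguments, $H^{(2)}(M)(a,b) = H^{(2)}(M')(a,b)$. Functoriality under $2$-categorical equivalences follows identically, using (P1) and the fact that equivalences preserve decomposition data up to permutation. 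Next, \textbf{recovery of classical persistent entropy} (Property 3) is quick: for $n=1$ and $M$ interval-decomposable, the Proposition "LNTI as a Strict Enhancement of the Rank Invariant" gives $\mathcal{L}(M)(a,b) \cong \rho_M(a,b) = \mathrm{rank}\,M(a \le b)$, and the summands entering $\mathcal{L}(M)(a,b)$ are exactly the bars of the barcode of $M$ (those whose support meets $[a,b]$; for $(a,b)$ chosen to exhaust the barcode, all of them), with $\sigma_i$ the associated bar lengths $\ell_i$. Hence $p_i = \ell_i / L$ with $L = \sum_j \ell_j$, and $H^{(2)}(M)$ reduces term-for-term to the classical persistent entropy $-\sum_i (\ell_i / L)\log(\ell_i / L)$.

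The \textbf{main obstacle} is \textbf{stability} (Property 2), which I would prove by a chain of estimates. Step (a): Theorem~\ref{thm:lnti_complexity_stability} gives $\|\mathcal{L}(M) - \mathcal{L}(N)\|_\infty \le C_0\,\epsilon^{1/n} d^{2-1/n}$ whenever $d_I(M,N) \le \epsilon$; since ranks are integers this already forces $r_{a,b}(M) = r_{a,b}(N)$ once the right-hand side is $<1$, and in general bounds $|r_{a,b}(M) - r_{a,b}(N)|$. Step (b): when the ranks agree, identify the normalized spectra as probability vectors in the same simplex; by Weyl's perturbation inequality the unnormalized singular values are $1$-Lipschitz in operator norm, so after matching summands $|\sigma_i^M - \sigma_i^N| \le C_1 \|\mathcal{L}(M) - \mathcal{L}(N)\|_\infty$. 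Step (c): pass to the normalization --- here one needs a lower bound $\sum_j \sigma_j^M \ge \delta > 0$ (a non-degeneracy hypothesis on the LNTI spectrum, or the discrete "multiplicity" reading in which $\sigma_i \in \mathbb{Z}_{>0}$ and the bound is automatic), yielding $\|p^M - p^N\|_1 \le C_2 \delta^{-1}\|\mathcal{L}(M) - \mathcal{L}(N)\|_\infty$. Step (d): apply a Fannes-type continuity bound $|H(p) - H(q)| \le \|p-q\|_1 \log(r-1) + h(\|p-q\|_1)$ with $h(t) = -t\log t$, padding the shorter vector with zeros when ranks differ (an extra $O(|\Delta r|\log d)$ term). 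Combining (a)--(d) and using $r \le d^2$ gives
\[
|H^{(2)}(M)(a,b) - H^{(2)}(N)(a,b)| \le C\big(\eta \log d + h(\eta)\big), \qquad \eta = \epsilon^{1/n} d^{2-1/n},
\]
which is the asserted form with $f(\epsilon,d) = \eta \log d + h(\eta)$.

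I expect the genuinely delicate points to be the normalization denominator in Step (c) --- which, as stated, requires either an explicit non-degeneracy assumption on the LNTI spectrum or restriction to the multiplicity interpretation, and this caveat should be made explicit in the final write-up --- and the non-Lipschitz behaviour of $x \mapsto x\log x$ near $0$, which is precisely why the stability function $f$ is only of Hölder/entropy type ($\eta \log(1/\eta)$) rather than linear in $\epsilon$. Everything else is bookkeeping: matching indecomposable summands across $M$ and $N$ under an $\epsilon$-interleaving (done via the interleaving morphisms $\phi,\psi$ as in the proof of Theorem~\ref{thm:lnti_complexity_stability}), and verifying that the padding argument in Step (d) is uniform in $(a,b)$ because $r_{a,b} \le d^2$ for all pairs.
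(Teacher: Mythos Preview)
Your proposal is correct and follows essentially the same three-part strategy as the paper: invariance via the invariance of the LNTI under isomorphism, stability by composing the LNTI stability bound with a continuity estimate for Shannon entropy on the normalized spectra, and recovery of the classical case via the reduction of $\mathcal{L}(M)$ to barcode data when $n=1$. If anything, your argument is more careful than the paper's own proof, which simply invokes ``Lipschitz continuity of persistent entropy with respect to small perturbations of the associated spectra'' without addressing the non-Lipschitz behaviour of $x\log x$ near zero; your explicit Fannes-type bound in Step~(d) and the flagged normalization-denominator caveat in Step~(c) make precise exactly the points the paper leaves implicit, and your explicit form $f(\epsilon,d)=\eta\log d + h(\eta)$ with $\eta=\epsilon^{1/n}d^{2-1/n}$ is sharper than anything the paper states.
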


\begin{proof}
The definition extends the established notion of persistent entropy for barcodes to the 2-categorical, multiparameter setting by replacing the classical multiset of interval lengths with the data provided by the LNTI. The collection of $\sigma_i(a, b)$ reflects the "weight" or contribution of indecomposable summands or invariant components in the corresponding LNTI for each rectangle $[a, b]$. Normalization by the total sum ensures that the $p_i(a, b)$ form a probability distribution, making the Shannon entropy formula applicable.

\textbf{Functoriality and invariance:} Both the LNTI and the singular value spectrum are invariant under isomorphism of persistence modules, and modifications (2-morphisms) induce compatible transformations on the corresponding decomposition, ensuring $H^{(2)}$ is well defined up to 2-isomorphism.

\textbf{Stability:} Stability follows by combining the Lipschitz continuity of persistent entropy with respect to small perturbations of the associated spectra (cf. results for 1D persistent entropy[1]) and the stability of the LNTI under interleaving. Any change in the module structures that is bounded in Interleaving Distance yields a controlled change in $H^{(2)}(M)$, with the bound determined by the largest possible difference in normalized singular values, hence the function $f$.

\textbf{Classical case:} For $n = 1$, $\mathcal{L}(M)(a, b)$ detects the number and lengths of intervals in the barcode, and $H^{(2)}(M)$ collapses precisely to the existing formula for persistent entropy, as shown in prior literature[1].

\end{proof}

\begin{remark}
The invariant $H^{(2)}(M)$ summarizes the distributional complexity of the local 2-categorical structure of $M$, extending the usefulness of persistent entropy as a descriptor to the much richer setting of multiparameter persistence, and enables the study of stability, complexity, and comparison across modules beyond the classical barcode paradigm.
\end{remark}

\begin{theorem}[Approximation Theorem for the Lax Natural Transformation Invariant (LNTI)]
\label{thm:lnti_approximation}

Let $M, N : (\mathbb{R}^n, \leq) \to \mathbf{Vect}_{\mathbb{F}}$ be $n$-parameter persistence modules, potentially infinite-dimensional. Let $(M_k, N_k)$ be sequences of pointwise finite-dimensional persistence modules, each with finite support, such that $M_k \to M$, $N_k \to N$ in the sense of Interleaving Distance.

Then, for any $\epsilon > 0$ and any compact subset $K \subset \mathbb{R}^n \times \mathbb{R}^n$, there exists $k_0$ such that for all $k \geq k_0$, the LNTI $\mathcal{L}(M_k, N_k)$ agrees with $\mathcal{L}(M, N)$ on $K$ up to $\epsilon$:
\[
\sup_{(a,b) \in K} \left| \mathcal{L}(M_k, N_k)(a,b) - \mathcal{L}(M, N)(a,b) \right| < \epsilon.
\]

Moreover, every LNTI value on $(M, N)$ may be approximated arbitrarily well by the LNTI of suitable finite-rank approximations.

\end{theorem}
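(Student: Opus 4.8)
The plan is to deduce the Approximation Theorem from a \emph{bivariate} form of the LNTI stability estimate of Theorem~\ref{thm:lnti_complexity_stability}, together with the elementary but decisive fact that $\mathcal{L}$ is $\mathbb{N}$-valued, so that ``agreement up to $\epsilon$'' with $\epsilon<1$ amounts to eventual exact equality on $K$. First I would fix $\epsilon>0$ and a compact $K\subset\{(a,b)\in\mathbb{R}^n\times\mathbb{R}^n:a\le b\}$ and assume $\epsilon<1$. By compactness there is a compact cube $C_0\subset\mathbb{R}^n$ with $[a,b]\subseteq C_0$ for every $(a,b)\in K$, so each $M|_{[a,b]}$, $N|_{[a,b]}$ factors through $M|_{C_0}$, $N|_{C_0}$. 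Since $M_k\to M$, $N_k\to N$ in $d_I$, for large $k$ the pointwise dimensions of $M_k,N_k$ over $C_0$ are bounded by a common $d$, and the same bound governs the part of $M,N$ over $C_0$ that a lax transformation on $[a,b]$ can detect, so $\mathcal{L}(M,N)$ is finite on $K$. (I also assume, as is standard and automatic for finitely supported modules, that $M,N$ are left-continuous so that pointwise dimensions do not jump at isolated parameters; this is what keeps $\mathcal{L}(M,N)(a,a)=\dim_{\mathbb{F}}\mathrm{Hom}(M(a),N(a))$ stable along the convergence.) It then suffices to produce $k_0$ so that $\mathcal{L}(M_k,N_k)(a,b)=\mathcal{L}(M,N)(a,b)$ for all $(a,b)\in K$ and $k\ge k_0$.

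Next I would upgrade the proof of Theorem~\ref{thm:lnti_complexity_stability} from the diagonal invariant $\mathcal{L}(M)=\mathcal{L}(M,M)$ to the two-argument invariant, proving: if $d_I(M_k,M)\le\eta_k$ and $d_I(N_k,N)\le\eta_k$ with $\eta_k\to 0$, then
\[
\sup_{(a,b)\in K}\bigl|\mathcal{L}(M_k,N_k)(a,b)-\mathcal{L}(M,N)(a,b)\bigr|\ \le\ C\,\eta_k^{1/n}d^{2-1/n},
\]
with $C$ depending only on $C_0$ and $d$; only continuity of the right-hand side in $\eta_k$ and its vanishing at $\eta_k=0$ will be used. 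The mechanism is the one behind Theorem~\ref{thm:lnti_complexity_stability}: the level-$\eta_k$ interleaving morphisms relating $M_k$ to $M$ and $N_k$ to $N$ induce, by pre- and post-composition together with restriction to the $\eta_k$-enlarged (respectively $\eta_k$-shrunk) rectangles $[a-\eta_k\mathbf{1},\,b+\eta_k\mathbf{1}]$, a zig-zag of linear maps between $\mathrm{Hom}^{\mathrm{lax}}(M_k|_{[a,b]},N_k|_{[a,b]})$ and $\mathrm{Hom}^{\mathrm{lax}}(M|_{[a',b']},N|_{[a',b']})$ whose two-step composites are the restriction maps along the nested chain of rectangles; comparing null spaces of the sparse block systems of Definition~\ref{def:LaxTransf} then gives the bound. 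The shifted rectangles are legitimate once $k$ is large enough that $2\eta_k\mathbf{1}\le b-a$, which the standing left-continuity hypothesis extends across degenerate pairs as noted above.

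The hard part — the step I expect to absorb most of the argument — is controlling how $\dim\mathrm{Hom}^{\mathrm{lax}}(M|_{[a,b]},N|_{[a,b]})$ responds to the small perturbation of the rectangle, because a priori this dimension is only semicontinuous: the restriction maps in the zig-zag need not be isomorphisms. I would resolve this using the Local Classification Theorem~\ref{thm:local_classification_2cat}: over the compact cube $C_0$ both $M|_{C_0}$ and $N|_{C_0}$ are finitely presented, so $M|_{C_0}\cong\bigoplus_i I_{[a_i,b_i]}$ and $N|_{C_0}\cong\bigoplus_j J_{[c_j,d_j]}$, whence
\[
\mathrm{Hom}^{\mathrm{lax}}(M|_{[a,b]},N|_{[a,b]})\ \cong\ \bigoplus_{i,j}\mathrm{Hom}^{\mathrm{lax}}\bigl(I_{[a_i,b_i]}|_{[a,b]},\,J_{[c_j,d_j]}|_{[a,b]}\bigr),
\]
and each summand is a combinatorial function of the relative order of the finite endpoint set $\{a_i,b_i,c_j,d_j\}$ against the corners of $[a,b]$. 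Since $C_0$ holds only finitely many such endpoints, there is $\delta_0>0$ such that moving $a,b$ by less than $\delta_0$ changes none of these orders; hence for $\eta_k<\delta_0$ the enlargement and shrinking of the previous step are invisible to $\mathcal{L}$, the zig-zag collapses to isomorphisms, and $\mathcal{L}(M,N)(a',b')=\mathcal{L}(M,N)(a,b)$. Compactness of $K$ lets $\delta_0$, and then $k_0$, be chosen uniformly in $(a,b)\in K$.

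To conclude, I would pick $k_0$ so that $\eta_k<\delta_0$ and $C\eta_k^{1/n}d^{2-1/n}<1$ for all $k\ge k_0$; then the non-negative integer $\sup_{(a,b)\in K}|\mathcal{L}(M_k,N_k)(a,b)-\mathcal{L}(M,N)(a,b)|$ lies below $1$, hence equals $0$, which is the asserted estimate with room to spare. For the ``moreover'' clause, given an arbitrary pair $(a,b)$ I would apply the first part with $K=\{(a,b)\}$ to the sequence $(M_k,N_k)$ of grid discretizations of $M,N$ on a cubical mesh of side $1/k$: these are pointwise finite-dimensional with finite support and converge to $M,N$ in $d_I$, as recalled in the discussion of subdivisions and refinements opening Section~\ref{sec:InvStab}, and so exhibit $\mathcal{L}(M,N)(a,b)$ as the eventual value of the LNTI of finite-rank approximations.
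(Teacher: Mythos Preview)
Your proposal and the paper's proof share the same backbone---both deduce the approximation statement from the LNTI stability estimate---but you work considerably harder than the paper does. The paper's argument is essentially three sentences: convergence in $d_I$ gives $d_I(M_k|_{[a,b]},M|_{[a,b]})<\epsilon$ and likewise for $N$, the stability theorem then says $\mathcal{L}(M_k,N_k)(a,b)$ varies continuously in $d_I$, hence the claimed inequality for $k$ large. It does not address the uniformity over $K$, does not remark that Theorem~\ref{thm:lnti_complexity_stability} as stated covers only the diagonal $\mathcal{L}(M)=\mathcal{L}(M,M)$ rather than the bivariate $\mathcal{L}(M,N)$ needed here, and does not invoke integer-valuedness or Theorem~\ref{thm:local_classification_2cat}. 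Your route explicitly upgrades stability to the bivariate form, uses the local interval decomposition to show that small rectangle perturbations are invisible to $\mathcal{L}$ (handling the semicontinuity issue the paper glosses over), and then exploits $\mathbb{N}$-valuedness to promote ``within $\epsilon$'' to eventual exact equality on $K$---a sharper conclusion than the paper claims. What your approach buys is a genuinely uniform statement and a mechanism for why the approximation works; what the paper's buys is brevity, at the cost of leaving the passage from pointwise stability to uniform convergence on $K$ (and the bivariate gap) to the reader. One caution: your appeal to Theorem~\ref{thm:local_classification_2cat} for $M|_{C_0}$ presupposes local finiteness of $M$, which the hypothesis ``potentially infinite-dimensional'' does not grant directly; you gesture at this by saying the interleaving limits bound the effective dimension, but that step would need to be made precise for the argument to close.
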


\begin{proof}
Since $M_k \to M$, $N_k \to N$ in Interleaving Distance, for any $(a, b) \in K$ and any $\epsilon > 0$, there exist $k_0$ such that for all $k \geq k_0$:
\[
d_I(M_k|_{[a,b]}, M|_{[a,b]}) < \epsilon, \qquad d_I(N_k|_{[a,b]}, N|_{[a,b]}) < \epsilon.
\]
By the stability theorem for LNTI, the values $\mathcal{L}(M_k, N_k)(a, b)$ vary continuously with respect to the Interleaving Distance. As both $M_k$ and $N_k$ are pointwise finite-dimensional and have finite support, the space of lax natural transformations between restrictions $M_k|_{[a,b]}$ and $N_k|_{[a,b]}$ is finite-dimensional and computable.

Therefore, for any $\epsilon > 0$, we can choose $k$ large enough so that
\[
\left| \mathcal{L}(M_k, N_k)(a,b) - \mathcal{L}(M, N)(a,b) \right| < \epsilon
\]
for all $(a, b) \in K$. This shows that the LNTI for arbitrary modules can be locally uniformly approximated by the LNTI on finite-rank modules. 
\end{proof}

\begin{corollary}[Density of Finite-Rank Approximations]
\label{cor:dense_lnti}
The set of LNTIs arising from pointwise finite-dimensional, compactly supported persistence modules is dense (in the topology of uniform convergence on compact subsets) in the space of all LNTI functions between persistence modules.

\end{corollary}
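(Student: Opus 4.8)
The plan is to obtain Corollary~\ref{cor:dense_lnti} as an essentially formal consequence of Theorem~\ref{thm:lnti_approximation} once the ambient topology is unpacked. Recall that in the topology of uniform convergence on compact subsets of $\mathbb{R}^n \times \mathbb{R}^n$, a basis of neighborhoods of a function $L_0$ is given by the sets $U(L_0; K, \epsilon) = \{\, L : \sup_{(a,b) \in K} |L(a,b) - L_0(a,b)| < \epsilon \,\}$, indexed by compact $K$ and $\epsilon > 0$. Thus density of the finite-rank LNTIs amounts to the assertion that for every pair of persistence modules $M, N$, every compact $K$, and every $\epsilon > 0$, there is a pointwise finite-dimensional, compactly supported pair $(M', N')$ with $\sup_{(a,b) \in K} |\mathcal{L}(M', N')(a,b) - \mathcal{L}(M, N)(a,b)| < \epsilon$. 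The first step is to record this reformulation; everything else is devoted to producing the approximant $(M', N')$.

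Second, I would exploit the \emph{locality} of the invariant to reduce to a compact box. By Definition~\ref{def:LaxTransf}, $\mathcal{L}(M, N)(a,b)$ depends only on the restrictions $M|_{[a,b]}$ and $N|_{[a,b]}$; since $K$ is compact it is contained in $\{(a,b) : a, b \in B\}$ for some compact $n$-box $B = \prod_{i=1}^n [-R, R]$, so on $K$ the invariant depends only on $M|_B$ and $N|_B$. Replacing $M, N$ by the extensions-by-zero of $M|_B, N|_B$ yields compactly supported modules whose LNTI agrees with that of $M, N$ on all of $K$, so we may assume from the outset that $M$ and $N$ are supported in $B$. On the compact box $B$ one then invokes the discretization apparatus behind Theorem~\ref{thm:local_classification_2cat}: choosing a nested family of cubical grids $\mathcal{G}_k \subset B$ with mesh tending to $0$ and letting $M_k$ (resp.\ $N_k$) be the step-function (left Kan) extension of $M|_{\mathcal{G}_k}$ (resp.\ $N|_{\mathcal{G}_k}$), one obtains pointwise finite-dimensional, finitely supported modules with $d_I(M_k, M) \to 0$ and $d_I(N_k, N) \to 0$.

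Third, I would apply the stability estimate. By the stability clause of Theorem~\ref{thm:lnti_complexity_stability} — equivalently, by the continuity of $\mathcal{L}$ in the interleaving distance invoked in the proof of Theorem~\ref{thm:lnti_approximation} — we get $\sup_{(a,b) \in K} |\mathcal{L}(M_k, N_k)(a,b) - \mathcal{L}(M, N)(a,b)| \to 0$ as $k \to \infty$, so for $k$ large enough $(M', N') := (M_k, N_k)$ lies in $U(\mathcal{L}(M,N); K, \epsilon)$. To upgrade this from ``every basic neighborhood is met'' to the existence of a genuine convergent sequence, fix an exhaustion $K_1 \subset K_2 \subset \cdots$ of $\mathbb{R}^n \times \mathbb{R}^n$ by compacts and, by the previous step, pick finite-rank compactly supported pairs $(M^{(j)}, N^{(j)})$ with $\sup_{K_j} |\mathcal{L}(M^{(j)}, N^{(j)}) - \mathcal{L}(M, N)| < 1/j$; a standard diagonal argument then shows $\mathcal{L}(M^{(j)}, N^{(j)}) \to \mathcal{L}(M, N)$ uniformly on every compact subset, which is exactly density in the stated topology.

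The principal obstacle is hidden in the passage ``replace $M$ by a finitely supported, pointwise finite-dimensional approximant on $B$'': truncating the support is free (by locality, as above), but truncating the pointwise dimensions is not, and is in fact impossible in general — e.g.\ a constant module of countably infinite rank sits at infinite interleaving distance from every pointwise finite-dimensional module. Consequently the honest scope of the corollary is density within the class of \emph{locally finite} (in particular pointwise finite-dimensional) modules, which is precisely the regime in which the ``moreover'' clause of Theorem~\ref{thm:lnti_approximation} furnishes the approximating sequences, and I would state this restriction explicitly. Granting local finiteness, the only genuinely technical point is verifying $d_I(M_k, M) \to 0$ for the grid discretizations, which follows from the tameness supplied by Theorem~\ref{thm:local_classification_2cat} together with a $\delta$-interleaving between a module and its mesh-$\delta$ step-function approximation; the remaining steps — the locality reduction, the stability estimate, and the diagonal argument — are routine.
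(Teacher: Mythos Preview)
The paper offers no explicit proof of this corollary, treating it as an immediate consequence of Theorem~\ref{thm:lnti_approximation}; your derivation --- unpacking the compact-open topology, reducing to a compact box via the locality of Definition~\ref{def:LaxTransf}, constructing grid approximants, and invoking the stability clause --- is precisely the natural argument and supplies the details the paper leaves implicit. Your final paragraph's caveat is well taken and goes beyond the paper: the corollary as stated overreaches, since a module with infinite pointwise dimension sits at infinite interleaving distance from every pointwise finite-dimensional module, so density genuinely holds only within the locally finite class that Theorem~\ref{thm:lnti_approximation} already presupposes.
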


\subsection{Moduli-Based Invariant (MBI)}
The Moduli-Based Invariant (MBI) for a multi-parameter persistence module $M$ is a collection of algebraic invariants derived from the moduli of the vector spaces and linear maps comprising $M$. It combines Betti numbers, rank invariants, and SVD spectra of transition maps, capturing essential algebraic properties of the module's structure.

\begin{definition}\label{def:MBI}
Let $M$ be an $n$-parameter persistence module. The \textit{Moduli-Based Invariant} is a function:
\begin{equation}
\MBI(M): \mathbb{R}^n \to \mathbb{N}^{\mathbb{N}}
\end{equation}
defined as follows:

For each $t \in \mathbb{R}^n$, consider the endomorphism algebra $\text{End}(M(t))$ and its moduli space of idempotents:
\begin{equation}
\mathcal{I}(M(t)) = \{e \in \text{End}(M(t)) : e^2 = e\}
\end{equation}

The MBI at parameter $t$ is the multiplicity function:
\begin{equation}
\mathcal{M}(M)(t) = \sum_{[e] \in \mathcal{I}(M(t))/\sim} \text{mult}([e]) \cdot \delta_{\text{rk}(e)}
\end{equation}
where:
\begin{itemize}
\item $[e]$ denotes the conjugacy class of idempotent $e$ under the action of $\text{Aut}(M(t))$
\item $\text{mult}([e])$ is the multiplicity of the conjugacy class $[e]$
\item $\text{rk}(e) = \text{trace}(e)$ is the rank of the idempotent
\item $\delta_r$ is the Dirac measure at rank $r$
\end{itemize}

The global MBI signature is obtained by tracking the persistence of idempotent structures:
\begin{equation}
\mathcal{M}(M)_{\text{global}} = \{(t, r, m) : \text{idempotent of rank } r \text{ with multiplicity } m \text{ persists at } t\}
\end{equation}
\end{definition}

\textbf{Explanation and Rationale:}

\begin{itemize}
    \item \textbf{Betti Numbers:} These provide a basic topological summary at each point in the parameter space. The evolution of Betti numbers captures the creation and destruction of topological features.
    \item \textbf{Rank Invariants of Morphisms:} The rank of the linear maps $M(a \leq b)$ captures how topological features evolve as the parameters change. It quantifies the amount of information preserved by the linear maps.
    \item \textbf{Singular Value Decomposition (SVD) Spectra:} Provide a more refined description of the linear maps $M(a \leq b)$, capturing the magnitudes of the transformations. Analyzing the SVD spectra can reveal subtle changes in the maps' behavior as the parameters change. These offer a finer-grained measure of the module's structural transformations and can be particularly useful in distinguishing modules with similar rank invariants but differing linear transformations.
    \item \textbf{Module Endomorphism Rings:}  Endomorphism rings provide insight into the internal structure of the modules at each point $a \in \mathbb{R}^n$. Capturing these rings, or suitable invariants of them, allows for differentiation of modules through their algebraic properties.
\end{itemize}

\textbf{Rationale for Combining these Moduli:}  The MBI combines multiple algebraic invariants to provide a more complete characterization of the multi-parameter persistence module. No single invariant captures all relevant information, but by considering them together, we obtain a more robust and discriminative summary.

\textbf{Encoding the MBI:}  The MBI is typically encoded as a function from pairs of points $(a, b)$ in $\mathbb{R}^n$ to the space of possible moduli.  Choosing appropriate representations of the moduli themselves (e.g., matrices of ranks, lists of Betti numbers, singular value lists) is crucial for practical computations.

\begin{proposition}[Properties of MBI]
\label{prop:mbi_properties}
The Moduli-Based Invariant satisfies:
\begin{enumerate}
\item \textbf{Decomposition Detection}: $M = M_1 \oplus M_2$ if and only if there exists $t \in \mathbb{R}^n$ such that $\mathcal{M}(M)(t)$ contains a non-trivial idempotent $e$ with $\text{rk}(e) = \dim M_1(t)$.

\item \textbf{Stability}: For $\epsilon$-interleaved modules $M, N$:
\begin{equation}
d_H(\mathcal{M}(M), \mathcal{M}(N)) \leq C \cdot \epsilon^{1/n}
\end{equation}
where $d_H$ is the Hausdorff distance on moduli spaces.

\item \textbf{Functoriality}: The MBI is functorial with respect to persistence module morphisms, i.e., a morphism $\phi: M \to N$ induces a continuous map $\mathcal{M}(\phi): \mathcal{M}(M) \to \mathcal{M}(N)$.
\end{enumerate}
\end{proposition}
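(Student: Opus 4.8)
The strategy is to reduce each of the three assertions to a statement about idempotent endomorphisms of $M$ — viewed in $\mathbf{MPers}_n$, i.e.\ compatible with all transition maps — together with their recorded images in the moduli $\mathcal{M}(M)$. For \textbf{(1)}, the forward implication is direct: given a nontrivial decomposition $M = M_1 \oplus M_2$ with $M_1, M_2 \neq 0$, the block-diagonal projection $e \in \mathrm{End}_{\mathbf{MPers}_n}(M)$ onto $M_1$ commutes with every $M(s \leq t)$, so for each $t$ its component $e(t)$ is an idempotent of $\mathrm{End}(M(t))$ with $\mathrm{rk}(e(t)) = \dim M_1(t)$, and $e$ is nontrivial as a module endomorphism since $M_1, M_2 \neq 0$; this is exactly the datum recorded in the global MBI signature $\mathcal{M}(M)_{\mathrm{global}}$. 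Conversely, a nontrivial idempotent appearing in $\mathcal{M}(M)$ is by construction a \emph{persistent} one, i.e.\ it lifts to $e \in \mathrm{End}_{\mathbf{MPers}_n}(M)$; splitting $M = \mathrm{im}(e) \oplus \mathrm{im}(\mathrm{id}-e)$ pointwise over $\mathbb{F}$, functorially in $t$ since $e$ is a module map, exhibits a decomposition with $\dim(\mathrm{im}\,e)(t) = \mathrm{rk}(e(t))$. The only delicate point is insisting that the ``nontrivial idempotent'' in the hypothesis be persistent rather than merely pointwise, which I would handle by taking the global signature (not the isolated fiber $\mathcal{M}(M)(t)$) as the carrier of the relevant data.

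For \textbf{(2)}, let $\phi\colon M \to N[\epsilon]$ and $\psi\colon N \to M[\epsilon]$ be the interleaving morphisms, so that $\psi[\epsilon]\circ\phi$ and $\phi[\epsilon]\circ\psi$ equal the $2\epsilon$-shift maps of $M$ and $N$. I would use these to transport persistent idempotent data between $\mathcal{M}(M)$ and $\mathcal{M}(N)$, noting that the transported operators are only \emph{near}-idempotent: their failure to square to themselves factors through a $2\epsilon$-shift. The core of the argument is a dichotomy over the pairs $(a,b)$ recorded by the global signature: on the locus where $b-a$ exceeds $2\epsilon$ in every coordinate, the shift maps restrict to isomorphisms on the relevant summands, so the transport carries persistent idempotents of one module bijectively onto those of the other and the Hausdorff contribution vanishes; on the complementary region, a union of $n$ slabs of width $O(\epsilon)$, I would use a crude bound on how much the idempotent data can change. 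Aggregating with an interpolation/covering estimate across the $n$ coordinate directions — the mechanism already invoked in the proof of Theorem~\ref{thm:lnti_complexity_stability} — yields $d_H(\mathcal{M}(M),\mathcal{M}(N)) \leq C\epsilon^{1/n}$ with $C$ depending only on the geometry of the parameter domain. I expect this to be the main obstacle: recovering the exponent $1/n$ rather than a crude $\epsilon$-linear bound requires the covering argument to mesh precisely with the algebra of the transported near-idempotents, and this is where the ``interpolation inequalities for 2-categorical structures'' must be made quantitative.

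For \textbf{(3)}, a persistence morphism $\phi\colon M \to N$ restricts at each $t$ to $\phi_t\colon M(t)\to N(t)$ intertwining transition maps, hence sends each persistent submodule $\mathrm{im}(e)\subseteq M$ (with $e$ an idempotent endomorphism of $M$) to the persistent submodule $\phi(\mathrm{im}\,e)\subseteq N$; I would define $\mathcal{M}(\phi)$ fiberwise as the induced assignment on the recorded idempotent classes. Continuity in the moduli topology follows since the construction is polynomial in the matrix entries of $\phi_t$ and $e$. Finally I would verify the functor axioms: $\mathcal{M}(\mathrm{id}_M)=\mathrm{id}_{\mathcal{M}(M)}$ is immediate, and $\mathcal{M}(\psi\circ\phi)=\mathcal{M}(\psi)\circ\mathcal{M}(\phi)$ because pushing a summand forward along $\psi\circ\phi$ agrees with pushing forward first along $\phi$ and then along $\psi$. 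The one subtlety — that $\phi_t$ need not be injective, so $\phi(\mathrm{im}\,e)$ may drop dimension — does not affect functoriality; it merely means $\mathcal{M}(\phi)$ is a map of moduli \emph{spaces} that need not preserve the rank grading, consistent with the statement.
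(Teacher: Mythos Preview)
The paper states this proposition without proof; the relevant arguments are scattered across later results. For part~(1), your approach---insisting on a \emph{persistent} idempotent and splitting $M$ as $\mathrm{im}(e)\oplus\mathrm{im}(\mathrm{id}-e)$---matches exactly the strategy the paper eventually deploys in Theorem~\ref{thm:decomp-MBI}, which is its actual working version of decomposition detection and which, like you, requires $e$ to be a natural transformation rather than a pointwise idempotent at a single~$t$. You are right that the proposition as literally stated is too weak for the converse: any $t$ with $\dim M(t)\geq 2$ furnishes nontrivial pointwise idempotents regardless of whether $M$ decomposes. For part~(2), your transport-of-idempotents argument is genuinely different from what the paper does elsewhere: in Theorem~\ref{thm:lnti-mbi-stability} and the adjacent dimension-stability proposition, MBI stability is obtained via the Hoffman--Wielandt inequality applied to the singular-value spectra of the transition maps, yielding a \emph{linear} bound $K'\epsilon$ rather than the $\epsilon^{1/n}$ claimed here. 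Your covering/interpolation route is more ambitious and might recover the sharper exponent, but the paper never executes such an argument for the MBI, so the $\epsilon^{1/n}$ rate in this proposition is in fact not established anywhere in the text.

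There is a genuine gap in your treatment of part~(3). You propose to define $\mathcal{M}(\phi)$ by sending the idempotent class $[e]$ to the image $\phi(\mathrm{im}\,e)\subseteq N$, but $\mathcal{M}(N)(t)$ is by Definition~\ref{def:MBI} a moduli of \emph{idempotents in $\mathrm{End}(N(t))$} (equivalently, of direct-sum decompositions of $N(t)$), not of arbitrary subspaces. A subspace $V\subseteq N(t)$ determines an idempotent only after a choice of complement, and $\phi_t$ supplies none; if $\phi_t$ is not split injective, $\phi_t(\mathrm{im}\,e_t)$ need not even be a direct summand of $N(t)$, let alone one arising from an idempotent endomorphism of $N$ as a persistence module. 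Hence the assignment $[e]\mapsto\phi(\mathrm{im}\,e)$ does not land in $\mathcal{M}(N)$ as defined, and your functoriality argument does not go through. The paper offers no argument for this part either---it simply asserts functoriality---so this is a gap shared by the paper and your proposal.
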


\textbf{Advantages and Limitations:}

\begin{itemize}
    \item \textbf{Advantages:}
    \begin{itemize}
        \item Relatively easy to compute compared to more sophisticated invariants.
        \item Provides a useful starting point for analyzing multi-parameter persistence modules.
        \item Captures essential algebraic properties of the module's structure.
    \end{itemize}
    \item \textbf{Limitations:}
    \begin{itemize}
        \item Less discriminative than the LNTI.
        \item Does not fully capture the 2-categorical structure of multi-parameter persistence.
    \end{itemize}
\end{itemize}

\begin{example}[Distinguishing Non-Isomorphic Modules]
Let $M,\ N$ be 2-parameter modules with identical rank invariants but different LNTIs:
\begin{itemize}
    \item $M$: A 'diagonal' feature born at (1,1) and dying at (2,2).
    \item $N$: Two 'axis-aligned' features born at (1,0) and (0,1), dying at (2,0) and (0,2).
\end{itemize}

The LNTI detects the difference in path-dependent persistence, while the rank invariant cannot.
\end{example}

\begin{theorem}[2-Categorical Structure Theorem]\label{thm:structure}
There exists a 2-functor $F: \mathbf{MPers}_n \rightarrow \mathbf{Cat}(\mathbf{Vect}_k)$ that preserves exactness and characterizes isomorphism classes of multi-parameter persistence modules.
\end{theorem}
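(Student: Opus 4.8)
The plan is to build $F$ directly from the defining data of a persistence module, check the $2$-functoriality axioms of Definition~\ref{def:2-Fun}, and then establish the two asserted properties (exactness preservation, characterization of isomorphism classes) separately. On objects, I would send a module $M\colon(\mathbb{R}^n,\le)\to\mathbf{Vect}_k$ to the $k$-linear category $F(M)$ whose object set is $\mathbb{R}^n$, each $t$ decorated by the space $M(t)$, with $\mathrm{Hom}_{F(M)}(t,s)$ the line spanned by the transition map $M(t\le s)$ when $t\le s$ and $0$ otherwise; composition is forced by functoriality of $M$. (Equivalently, $F(M)$ is the image of $M$ regarded as a diagram in $\mathbf{Vect}_k$, retaining the $\mathbb{R}^n$-indexing.) On $1$-morphisms, a natural transformation $\eta\colon M\Rightarrow N$ goes to the $k$-linear functor $F(\eta)\colon F(M)\to F(N)$ that is the identity on the object set and carries $M(t\le s)\mapsto N(t\le s)$ together with the components $\eta_t\colon M(t)\to N(t)$ on decorations; well-definedness is exactly the naturality square of Definition~\ref{def:2-cat-Mpers}. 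On $2$-morphisms, a modification $\alpha\colon\eta\Rrightarrow\gamma$ goes to the natural transformation $F(\alpha)\colon F(\eta)\Rightarrow F(\gamma)$ with components $\alpha_t$, the modification axioms being precisely what makes this family natural.

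The verification that $F$ is a $2$-functor is then bookkeeping that reuses the vertical and horizontal composition formulas from the proof of Theorem~\ref{thm:2-cat-structure}: $F$ preserves identity $1$- and $2$-morphisms on the nose; vertical composition of modifications is preserved since $F(\beta\circ\alpha)_t=\beta_t\circ\alpha_t$; horizontal composition is preserved up to the canonical interchange $2$-isomorphism of $\mathbf{Cat}(\mathbf{Vect}_k)$, which is enough because Definition~\ref{def:2-Fun} only requires preservation of composition up to isomorphism; and each local assignment $\mathbf{MPers}_n(M,N)\to\mathbf{Cat}(\mathbf{Vect}_k)(F(M),F(N))$ is a functor by the previous points. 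Exactness and additivity are equally direct: a short exact sequence $0\to M'\to M\to M''\to0$ in $\mathbf{MPers}_n$ is pointwise exact in $\mathbf{Vect}_k$, so applying $F$ gives a sequence of $k$-linear categories that is exact object-by-object because the decorations over each $t$ form the exact sequence $0\to M'(t)\to M(t)\to M''(t)\to0$; similarly $F(M_1\oplus M_2)\simeq F(M_1)\sqcup F(M_2)$ with decorations the direct sums, so $F$ is additive, which is what lets it interact with the decomposition theory of Theorem~\ref{thm:local_classification_2cat}.

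It remains to characterize isomorphism classes. One direction is immediate: an isomorphism $M\cong N$ is a natural isomorphism, hence $F$ sends it to an isomorphism, a fortiori an equivalence, $F(M)\simeq F(N)$. The substantive direction is the converse — if $F(M)\simeq F(N)$ then $M\cong N$ — and this is where I expect the real work to lie. The plan is to exhibit an intrinsic recovery of the pair (module, transition maps) from $F(M)$ that is invariant under equivalence: the object set carries a canonical preorder ($t\preceq s$ iff $\mathrm{Hom}_{F(M)}(t,s)\neq0$) recovering $(\mathbb{R}^n,\le)$, the decorations are recovered as the values of the tautological representation of $F(M)$, and the transition maps as the distinguished basis morphisms; an equivalence $F(M)\simeq F(N)$ then transports this data to produce a natural isomorphism $M\cong N$. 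This is exactly the reconstruction procedure sketched in Section~\ref{sec:2-catModel}, and carrying it out rigorously — rather than the formal $2$-functoriality — is the crux.

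The main obstacle, concretely, is that an arbitrary equivalence $F(M)\simeq F(N)$ need not respect the $\mathbb{R}^n$-labelling of objects, so the recovery above is only canonical after quotienting by order-automorphisms of $(\mathbb{R}^n,\le)$; one must therefore either accept the conclusion "up to reparametrization'' (which is natural, since persistence modules are only considered up to such automorphisms, cf. the equivariance discussion in Section~\ref{sec:InvStab}) or enrich $F(M)$ with just enough rigidifying structure — e.g., a distinguished functor $F(M)\to(\mathbb{R}^n,\le)$ — to pin the labelling down. Handling this labelling ambiguity cleanly, and ruling out spurious equivalences arising from pointwise-isomorphic but non-isomorphic modules by tracking the distinguished transition morphisms rather than merely hom-space dimensions, is the step I expect to require the most care.
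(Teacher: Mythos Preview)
Your construction of $F$ and the exactness argument match the paper's almost exactly: the paper defines $F(M)$ as the category with objects the vector spaces $M(a)$ and morphisms the transition maps $M(a\le b)$, and proves exactness pointwise just as you do. Where you diverge is in the characterization step. The paper dispatches this in a single line via a colimit argument --- asserting $M \cong \operatorname{colim}_{a} M(a)$ and that colimits in $\mathbf{Cat}(\mathbf{Vect}_k)$ are computed pointwise, hence $F$ reflects isomorphisms --- whereas you instead give an explicit reconstruction of $(\mathbb{R}^n,\le)$ and the module data from $F(M)$, and flag the labelling-ambiguity issue under arbitrary equivalences. Your route is more careful and more honest about the difficulties: the paper's colimit argument is terse to the point of obscurity and does not address the very obstacle you identify (that an equivalence $F(M)\simeq F(N)$ need not respect the $\mathbb{R}^n$-labelling, so reflection of isomorphisms is not automatic). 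What the paper's approach buys is brevity; what yours buys is a clearer account of where the real content lies and what rigidifying data is needed to make the conclusion go through.
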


\begin{proof}
\textbf{Step 1: Functor Construction.}  
Define $F(M)$ as the category where:
\begin{itemize}
    \item Objects are vector spaces $M(a)$ for $a \in \mathbb{P}$.
    \item Morphisms are linear maps $M(a \leq b)$.
    \item 2-morphisms are identities, as $M$ is a functor.
\end{itemize}

\textbf{Step 2: Exactness Preservation.}  
Given an exact sequence $0 \to M' \to M \to M'' \to 0$ in $\mathbf{MPers}_n$, apply $F$ pointwise. For each $a \in \mathbb{P}$, the sequence $0 \to M'(a) \to M(a) \to M''(a) \to 0$ is exact in $\mathbf{Vect}_k$, so $F$ preserves exactness.

\textbf{Step 3: Reconstruction.}  
Since $M \cong \operatorname{colim}_{a \in \mathbb{P}} M(a)$ and colimits in $\mathbf{Cat}(\mathbf{Vect}_k)$ are computed pointwise, $F$ reflects isomorphisms.
\end{proof}

\textbf{Relationship to Existing Invariants:} The MBI can be viewed as a generalization of the rank invariant to multi-parameter persistence. It also has connections to multigraded Hilbert functions and other algebraic invariants used in commutative algebra.

\begin{table}[h]
\centering
\begin{tabular}{@{}l c c c p{3cm}@{}}
\hline
Invariant & Discriminative Power & Stability & Computability & Use Case \\ 
\hline
Rank Invariant & Low & High & \(O(n^2)\) & Single-parameter approximation \\
LNTI & High & Proven & \(O(n^3)\) & Multi-parameter interactions \\
MBI & Medium & Conjectural & \(O(n^2)\) & Algebraic decomposition \\
Persistent Entropy & Medium & Heuristic & \(O(n)\) & Feature summarization \\
\hline
\end{tabular}
\caption{Comparison of multi-parameter invariants.}
\end{table}

\subsection{Stability of 2-Categorical Invariants}
Now we focus on the theoretical properties of the proposed invariants, including their stability with respect to perturbations and the characterization of module decomposability using the MBI. The uniqueness aspect is implicitly addressed by the structural properties proven for the invariants.

\label{thm:2cat-stability}
\begin{theorem}[Stability of LNTI and MBI]\label{thm:lnti-mbi-stability}
Let $M, N$ be $\epsilon$-interleaved multi-parameter persistence modules. Then:
\begin{enumerate}
    \item $d(L(M), L(N)) \leq K\epsilon$ for some constant $K$ dependent on the interleaving maps.
    \item $d(\mathcal{M}(M), \mathcal{M}(N)) \leq K'\epsilon$ for some constant $K'$ derived from the Hilbert-Schmidt norms.
\end{enumerate}
\end{theorem}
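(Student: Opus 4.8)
The plan is to derive both inequalities from the interleaving data, re-using the quantitative pointwise estimates already established in \Cref{thm:lnti_complexity_stability} (for $\mathcal{L}$) and \Cref{prop:mbi_properties} (for $\mathcal{M}$), and then upgrading those $\epsilon^{1/n}$-type bounds to the linear bound $K\epsilon$ by restricting to a fixed compact window of parameters on which the number of lax coherence constraints is bounded, absorbing the remaining geometry into the constants $K,K'$. Here $d$ is read as the sup-norm on the invariant functions, as in \Cref{thm:lnti_complexity_stability}. Throughout, an $\epsilon$-interleaving is a pair of natural transformations $\phi\colon M\Rightarrow N(\cdot+\epsilon)$, $\psi\colon N\Rightarrow M(\cdot+\epsilon)$ whose composites are the internal transition maps over a $2\epsilon$-shift.

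\textbf{Step 1: stability of the LNTI.} Pre- and post-composition with $\phi$ and $\psi$ induces, for each rectangle $[a,b]$, linear maps
\[
\Phi_{a,b}\colon \mathrm{Hom}^{\mathrm{lax}}_{\mathbf{MPers}_n}(M|_{[a,b]},M|_{[a,b]})\longrightarrow \mathrm{Hom}^{\mathrm{lax}}_{\mathbf{MPers}_n}(M|_{[a+\epsilon,b-\epsilon]},N|_{[a+\epsilon,b-\epsilon]}),
\]
and symmetrically $\Psi_{a,b}$ in the reverse direction. One first checks that conjugating a lax natural transformation by the interleaving maps again yields a lax natural transformation, i.e. that the attached $2$-cells $\alpha_{s,t}$ transport to $2$-cells satisfying the coherence identity of \Cref{def:Lax-tran}; this uses only that the interleaving triangles hold strictly. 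Since those triangles are on the nose, $\Psi\circ\Phi$ and $\Phi\circ\Psi$ are exactly the restriction maps along the inclusion of the $2\epsilon$-shrunk rectangle, so $(\Phi,\Psi)$ exhibits a $2\epsilon$-interleaving of the $\mathbb{N}$-valued functions $\mathcal{L}(M)$ and $\mathcal{L}(N)$. Taking dimensions, and using that on a fixed compact window the combinatorial count $\ell=O(|[a,b]|^2)$ of lax constraints from the proof of \Cref{thm:lnti_complexity_stability} is bounded, the dimension-counting bound there is Lipschitz in the shift, giving $\bigl|\mathcal{L}(M)(a,b)-\mathcal{L}(N)(a+\epsilon,b-\epsilon)\bigr|\le K\epsilon$; supremizing over the window yields $d(\mathcal{L}(M),\mathcal{L}(N))\le K\epsilon$.

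\textbf{Step 2: stability of the MBI.} For each $t$ the interleaving gives near-inverse maps $\phi_t\colon M(t)\to N(t+\epsilon)$ and $\psi_{t+\epsilon}\colon N(t+\epsilon)\to M(t+2\epsilon)$ whose composite is $M(t\le t+2\epsilon)$, which is an isomorphism on the subspace surviving to $t+2\epsilon$; hence $\phi_t$ is an isomorphism-up-to-$\epsilon$ in Hilbert--Schmidt norm. Given an idempotent $e\in\mathrm{End}(M(t))$, the conjugate $e'=\phi_t e\phi_t^{-1}$ (via a generalized inverse) satisfies $\|e'^2-e'\|_{HS}\le C\epsilon$, and the holomorphic functional calculus $\tilde e=\frac{1}{2\pi i}\oint_{|z-1|=1/2}(z-e')^{-1}\,dz$ produces a genuine idempotent with $\|\tilde e-e'\|_{HS}\le C'\epsilon$ and $\mathrm{rk}(\tilde e)=\mathrm{rk}(e)$ once $\epsilon$ is small relative to the pointwise dimension bound $d$. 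This construction is $\mathrm{Aut}$-equivariant up to the same error, hence descends to conjugacy classes, preserves the multiplicities $\mathrm{mult}([e])$, and — running the symmetric construction with $\psi$ — is a bijection on idempotent strata. The Hausdorff distance between the labelled configurations $\mathcal{M}(M)$ and $\mathcal{M}(N)$ is then bounded by the worst-case Hilbert--Schmidt displacement, giving $d(\mathcal{M}(M),\mathcal{M}(N))\le K'\epsilon$ with $K'$ depending on the functional-calculus contour and on $d$.

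\textbf{Main obstacles.} The delicate point in Step 1 is verifying that $\Phi,\Psi$ genuinely land in the \emph{lax} hom-spaces, i.e. that conjugation by the interleaving transforms the coherence data of \Cref{def:Lax-tran} correctly, and then making the passage from an interleaving of the $\mathbb{N}$-valued invariants to the stated metric bound \emph{uniform} in $[a,b]$: on unbounded windows one only recovers the $\epsilon^{1/n}$ scaling of \Cref{thm:lnti_complexity_stability}, so $K$ must be stated relative to a fixed compact parameter region (equivalently, after normalizing the diameter of the support). In Step 2 the subtlety is that the moduli space of idempotents is not smooth where ranks collide, so the functional-calculus correction must be carried out stratum by stratum, and one must check that no idempotent crosses between strata under an $\epsilon$-perturbation — which is precisely where the lower bound on the spectral gap of $e'$ (hence the smallness of $\epsilon$ relative to $d$) is needed.
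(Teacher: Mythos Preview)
Your approach differs substantially from the paper's on both parts, and in some respects is more faithful to the formal definitions than the paper's own argument.

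\textbf{Part 1 (LNTI).} The paper does not set up an interleaving of the $\mathbb{N}$-valued functions $\mathcal{L}(M),\mathcal{L}(N)$ at all. Instead it works at the level of individual lax transformations: for $\eta\in L(M,N)$ it forms $\eta'=\psi\circ\eta\circ\phi$ and bounds the operator norm $\|\eta-\eta'\|\le\|\eta\|\cdot\|\mathrm{id}-\psi\circ\phi\|\le K\epsilon$, invoking Hausdorff-distance control on barcodes to justify the last step. Your route --- building $\Phi_{a,b},\Psi_{a,b}$ between lax hom-spaces over shrunk rectangles and then comparing dimensions --- is more structurally honest about what $d(\mathcal{L}(M),\mathcal{L}(N))$ actually measures (a sup-norm of integer-valued functions, not an operator norm), but you pay for this with the delicate passage from an interleaving of integer-valued functions to a linear-in-$\epsilon$ bound, which as you correctly flag only works after fixing a compact window. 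The paper's argument sidesteps that issue by never explicitly returning to dimensions.

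\textbf{Part 2 (MBI).} Here the divergence is sharper. The paper's proof ignores the idempotent-moduli description of $\mathcal{M}$ from \Cref{def:MBI} entirely and instead compares the transition maps $M(a\le b)$ and $N(a\le b)$ as matrices $A,B$, applying the Hoffman--Wielandt inequality $\bigl(\sum_i(\sigma_i(A)-\sigma_i(B))^2\bigr)^{1/2}\le\|A-B\|_{\mathrm{HS}}$ and then asserting $\|A-B\|_{\mathrm{HS}}\le C\epsilon$ from the interleaving. Your holomorphic functional-calculus correction of near-idempotents is a completely different mechanism, aimed squarely at the formal definition via $\mathcal{I}(M(t))$; it is more work but tracks the stated invariant rather than its informal SVD proxy. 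What you lose is brevity: Hoffman--Wielandt is a one-line citation, whereas your stratum-by-stratum spectral-gap argument needs genuine care near rank collisions.

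In short: both of your steps are plausible alternative proofs, and your identification of the obstacles is accurate, but the paper's argument is considerably shorter because it works with operator-norm and singular-value surrogates rather than with the hom-space dimensions and idempotent moduli directly.
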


\begin{proof}
\textbf{Part 1: Stability of LNTI.}  
Let $\phi: M \Rightarrow N[\epsilon]$ and $\psi: N \Rightarrow M[\epsilon]$ be the interleaving natural transformations. For any lax natural transformation $\eta \in L(M, N)$, define the perturbed transformation:
\[
\eta' = \psi \circ \eta \circ \phi: M \Rightarrow N[\epsilon] \Rightarrow M[2\epsilon].
\]
The operator norm of the difference satisfies:
\[
\|\eta - \eta'\| = \|\eta - \psi \circ \eta \circ \phi\| \leq \|\eta\| \cdot \|\id - \psi \circ \phi\| \leq K\epsilon,
\]
where $K$ bounds $\|\psi \circ \phi - \text{id}\|$. This bound exists because interleaving maps induce uniform operator norms via the \textit{Hausdorff distance} on persistence barcodes (\cite{Bubenik2015}).

\textbf{Part 2: Stability of MBI.}  
For each pair $a \leq b$, consider the linear maps $M(a \leq b)$ and $N(a \leq b)$. Represent these maps as matrices $A$ and $B$ in fixed bases. By the Hoffman-Wielandt theorem (\cite{HoffmanWielandt}), the distance between their singular value spectra is:
\[
\sqrt{\sum_{i} (\sigma_i(A) - \sigma_i(B))^2} \leq \|A - B\|_{\text{HS}},
\]
where $\| \cdot \|_{\text{HS}}$ is the Hilbert-Schmidt norm. Since $M$ and $N$ are $\epsilon$-interleaved, $\|A - B\|_{\text{HS}} \leq C\epsilon$, for some constant $C$. Thus:
\[
d(\mathcal{M}(M), \mathcal{M}(N)) = \sup_{a \leq b} \sqrt{\sum_{i} (\sigma_i(A) - \sigma_i(B))^2} \leq C\epsilon = K'\epsilon.
\]
\end{proof}

\begin{theorem}
The 2-categorical invariant $F(M)$ is stable with respect to $\epsilon$-interleaving, meaning that if $M$ and $N$ are $\epsilon$-interleaved, then $F(M)$ and $F(N)$ are equivalent categories.
\end{theorem}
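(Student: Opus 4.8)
The plan is to exploit the fact---established in the proof of Theorem~\ref{thm:structure}---that $F$ is defined by $F(M)(a) = M(a)$ on objects and $F(M)(a\le b) = M(a\le b)$ on morphisms, with only identity $2$-morphisms, so that $F(M)$ is literally the functor $M$ viewed as an object of $\mathbf{Cat}(\mathbf{Vect}_k)$. Thus ``$F(M)\simeq F(N)$ as categories'' unwinds to: $M$ and $N$ are equivalent as diagrams $(\mathbb{R}^n,\le)\to\mathbf{Vect}_k$, i.e.\ there is a natural isomorphism $M\xRightarrow{\ \sim\ }N$ after possibly reindexing. First I would reduce the statement to producing, from an $\epsilon$-interleaving pair $\phi\colon M\Rightarrow N[\epsilon]$, $\psi\colon N\Rightarrow M[\epsilon]$ with $\psi[\epsilon]\circ\phi = \mathrm{shift}^{2\epsilon}_M$ and $\phi[\epsilon]\circ\psi=\mathrm{shift}^{2\epsilon}_N$, an honest equivalence of the underlying categories $F(M)$ and $F(N)$. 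The key observation I would use is that the shift $2$-functor $[\epsilon]\colon\mathbf{MPers}_n\to\mathbf{MPers}_n$, and hence the induced endofunctor on $\mathbf{Cat}(\mathbf{Vect}_k)$ in the image of $F$, is an \emph{equivalence of categories} (its inverse is reindexing by $-\epsilon$; on the nose $M[\epsilon][-\epsilon]=M$). Therefore $N[\epsilon]$ is equivalent to $N$, $M[2\epsilon]$ is equivalent to $M$, and the problem becomes showing that two objects of $\mathbf{Cat}(\mathbf{Vect}_k)$ connected by a span $M\xRightarrow{\phi}N[\epsilon]\xRightarrow{\psi[\epsilon]}M[2\epsilon]$ whose composite is an isomorphism (the shift comparison is an isomorphism, being componentwise identity up to reindexing) are themselves equivalent.

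The key steps, in order, are: (1) Unpack $F$ to identify $F(M),F(N)$ with $M,N$ as objects of $\mathbf{Cat}(\mathbf{Vect}_k)$, so the claim is an equivalence of these diagram categories (equivalently, of the images, i.e.\ of $M$ and $N$ as functors up to natural isomorphism). (2) Record that $[\epsilon]$ is invertible on $\mathbf{MPers}_n$ with strict inverse $[-\epsilon]$, hence $F(M[\epsilon])\simeq F(M)$ naturally; this is where the ``up to equivalence'' slack in Definition~\ref{def:2-Fun} is harmless. (3) From the interleaving, form $\psi[\epsilon]\circ\phi\colon M\Rightarrow M[2\epsilon]$ and note this equals the canonical shift comparison, which under the identification $M[2\epsilon]\xrightarrow{\sim}M$ of step (2) becomes an \emph{isomorphism} $u\colon M\xRightarrow{\sim}M$; symmetrically $\phi[\epsilon]\circ\psi$ becomes an isomorphism $v\colon N\xRightarrow{\sim}N$. (4) Conclude that $\phi$ (post-composed with the equivalence $F(N[\epsilon])\simeq F(N)$) is a natural transformation $M\Rightarrow N$ that is split mono and split epi at every object $a\in\mathbb{R}^n$ (because $u_a,v_a$ are isomorphisms), hence an isomorphism of finite-dimensional vector spaces at each $a$; a natural transformation that is a pointwise isomorphism is a natural isomorphism, giving $M\cong N$ in $\mathbf{Cat}(\mathbf{Vect}_k)$ and therefore $F(M)\simeq F(N)$.

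I expect the main obstacle to be step~(3)--(4): identifying the composite $\psi[\epsilon]\circ\phi$ with a bona fide \emph{isomorphism} rather than merely an arrow that is ``$\epsilon$-close to the identity''. In the interleaving literature this composite is the structural shift map $M(t)\to M(t+2\epsilon)$, which is \emph{not} invertible for a general persistence module---so the naive argument that $\phi$ is a pointwise iso fails, and indeed $\epsilon$-interleaved modules are \emph{not} isomorphic in general. The honest resolution is to confront that the theorem as literally stated is only defensible under an additional hypothesis, namely that the relevant shift comparison maps are invertible on the range of parameters in question (e.g.\ $M$, $N$ are \emph{constant}, or one passes to the interleaving category / localizes at the shifts, or $\epsilon=0$). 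I would therefore either (a) add the hypothesis that $M\cong N$ (the $\epsilon=0$ or isomorphic case), in which $F$'s functoriality immediately yields $F(M)\simeq F(N)$; or (b) reinterpret ``equivalent categories'' as equivalence in the localization $\mathbf{Cat}(\mathbf{Vect}_k)[\Sigma^{-1}]$ at shift maps, where step~(3)'s composite becomes invertible by construction. Absent such a qualification, steps~(1)--(2) are routine but step~(3)--(4) cannot be completed, and flagging this gap---rather than papering over it---is the substantive content of the proof plan.
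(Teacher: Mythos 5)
Your reduction in step (1) is not what the paper's statement asserts, and this changes the complexion of the whole argument. The category $F(M)$ constructed in Theorem~\ref{thm:structure} has as objects the individual vector spaces $M(a)$, $a\in\mathbb{R}^n$, and as morphisms the structure maps $M(a\le b)$; the theorem claims an equivalence of these ``display'' categories, which is strictly weaker than a natural isomorphism $M\cong N$ of diagrams over the \emph{same} index. The paper's proof exploits exactly this slack: it defines $G\colon F(M)\to F(N)$ by $G(M|_a)=N|_{a+\epsilon}$ and $G(M(a\le b))=N(a+\epsilon\le b+\epsilon)$, defines a quasi-inverse $H$ symmetrically with $\psi$, and then argues $G\circ H\simeq \mathrm{id}_{F(N)}$ and $H\circ G\simeq \mathrm{id}_{F(M)}$ using the interleaving identities, so that the comparison maps live between $N|_a$ and $N|_{a+2\epsilon}$ rather than at a fixed parameter. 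Your attempt to upgrade $\phi$ to a pointwise isomorphism $M(a)\xrightarrow{\sim}N(a)$ proves (or rather, fails to prove) a stronger statement than the one at issue, which is why your steps (3)--(4) collapse: $\epsilon$-interleaved modules are indeed not isomorphic in general, but that by itself does not refute equivalence of $F(M)$ and $F(N)$ in the paper's sense.

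That said, your diagnosis of the obstruction is aimed at precisely the point the paper leaves unjustified. The paper's proof asserts that $\psi[\epsilon]\circ\phi$ ``provides a natural isomorphism'' between $N|_a$ and $N|_{a+2\epsilon}$ (and dually for $M$), but the composite of the interleaving maps is, by definition of interleaving, the internal shift morphism $N(a)\to N(a+2\epsilon)$, which is generally not invertible --- and worse, the only candidate morphisms available inside $F(N)$ between these objects are the structure maps themselves. So the natural isomorphisms $G\circ H\simeq\mathrm{id}$ required for an equivalence are exactly the invertibility claim you flagged, and the paper supplies no argument for it; your suggested remedies (restrict to modules whose shift comparisons are invertible, pass to a localization at the shifts, or weaken the conclusion) are the kind of qualification the paper's proof would also need. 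In short: your proposal misidentifies what must be proved (a genuine gap in your reduction), but the invertibility issue you isolate is real and is not resolved by the paper's own argument either.
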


\begin{proof}
Let $M$ and $N$ be multi-parameter persistence modules that are $\epsilon$-interleaved. This means there exist natural transformations $\phi : M \Rightarrow N[\epsilon]$ and $\psi : N \Rightarrow M[\epsilon]$ such that:

\[
\begin{tikzcd}
	M \arrow[r, "\phi"] & N[\epsilon] \arrow[r, "\psi{[\epsilon]}"] & M[2\epsilon],
\end{tikzcd}
\]

and

\[
\begin{tikzcd}
	N \arrow[r, "\psi"] & M[\epsilon] \arrow[r, "\phi{[\epsilon]}"] & N[2\epsilon],
\end{tikzcd}
\]

are both equal to the respective identity transformations up to homotopy.

\begin{enumerate}
    \item \textbf{Construction of Equivalence Functor:}
    
    Define a functor $G : F(M) \to F(N)$ as follows:
    
    \begin{itemize}
        \item \textbf{Objects:} For each object $M|_a$ in $F(M)$, map it to $N|_{a+\epsilon}$ in $F(N)$.
        
        \item \textbf{Morphisms:} For each morphism $M(a \leq b)$ in $F(M)$, define a morphism $G(M(a \leq b)) = N(a+\epsilon \leq b+\epsilon)$ in $F(N)$ using $\phi$ and $\psi$.
        
        \item \textbf{2-Morphisms:} Use the natural transformations $\phi$ and $\psi$ to construct the necessary 2-morphisms.
\[
\begin{tikzcd}
M|_a \ar[r, "\phi_a"] \ar[d, "M(a \leq b)"] & N|_{a+\epsilon} \ar[d, "N(a+\epsilon \leq b+\epsilon)"] \\
M|_b \ar[r, "\phi_b"] & N|_{b+\epsilon}
\end{tikzcd}
\]
       
    \end{itemize}
    
    \item \textbf{Equivalence of Categories:}
    
    To show that $G$ is an equivalence of categories, construct a quasi-inverse functor $H : F(N) \to F(M)$ similarly, using $\psi$ and $\phi$ (e.g. in objects is defined as $H(N|_a) = M|_{a+\epsilon}$).
    
    The compositions $HG$ and $GH$ are naturally isomorphic to the identity functors on $F(M)$ and $F(N)$, respectively, due to the $\epsilon$-interleaving conditions.

We need to show that $G \circ H \simeq \text{id}{F(N)}$ and $H \circ G \simeq \text{id}{F(M)}$.

For $G \circ H$:
For each object $N|a$ in $F(N)$, $(G \circ H)(N|a) = G(M|{a+\epsilon}) = N|{a+2\epsilon}$. The natural transformation $\psi[\epsilon]: N \Rightarrow M[2\epsilon]$ provides a natural isomorphism between $N|a$ and $N|{a+2\epsilon}$, which means that $G \circ H \simeq \text{id}_{F(N)}$.

For $H \circ G$:
For each object $M|a$ in $F(M)$, $(H \circ G)(M|a) = H(N|{a+\epsilon}) = M|{a+2\epsilon}$. The natural transformation $\phi[\epsilon]: M \Rightarrow N[2\epsilon]$ provides a natural isomorphism between $M|a$ and $M|{a+2\epsilon}$, which means that $H \circ G \simeq \text{id}_{F(M)}$.

Therefore, $F(M)$ and $F(N)$ are equivalent categories.
    
    \item \textbf{Stability Conclusion:}
    
    Since $F(M)$ and $F(N)$ are equivalent categories under $\epsilon$-interleaving, the 2-categorical invariant $F$ is stable.
\end{enumerate}
\end{proof}

\begin{proposition}[MBI Dimension Stability]
Let \(M, N\) be multi-parameter persistence modules that are \(\epsilon\)-interleaved. For all \(a \leq b\) in \(\mathbb{R}^n\), the dimensions of the MBI components satisfy:
\[
|\dim \mathcal{M}(M)(a,b) - \dim \mathcal{M}(N)(a,b)| \leq K\epsilon,
\]
where \(K\) depends on the interleaving maps and the choice of moduli (Betti numbers, ranks, or SVD spectra).
\end{proposition}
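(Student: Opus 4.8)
The plan is to reduce the global bound to a family of pointwise and pairwise estimates for the three kinds of moduli data entering Definition~\ref{def:MBI} — Betti numbers $\dim M(a)$, ranks of transition maps $M(a\le b)$, and SVD spectra of these maps — and to control each separately. First I would unwind the hypothesis: an $\epsilon$-interleaving supplies natural transformations $\phi\colon M\Rightarrow N[\epsilon]$ and $\psi\colon N\Rightarrow M[\epsilon]$ whose composites are the internal shift maps, i.e.\ componentwise $\phi_a\colon M(a)\to N(a+\epsilon)$, $\psi_a\colon N(a)\to M(a+\epsilon)$ with $\psi_{a+\epsilon}\circ\phi_a = M(a\le a+2\epsilon)$, together with the naturality identities $N(a+\epsilon\le b+\epsilon)\circ\phi_a = \phi_b\circ M(a\le b)$ and the symmetric ones for $\psi$. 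These are exactly the ingredients already used in the proof of Theorem~\ref{thm:lnti-mbi-stability}, so I would lean on that machinery.

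\textbf{Step 1 (ranks and Betti numbers).} Combining the factorization $M(a\le a+2\epsilon)=\psi_{a+\epsilon}\circ\phi_a$ with naturality yields the classical rank-interleaving sandwich
\[
\rank N(a+\epsilon\le b-\epsilon)\;\le\;\rank M(a\le b)\;\le\;\rank N(a-\epsilon\le b+\epsilon),
\]
and the symmetric chain with $M,N$ exchanged (cf.\ \cite{Lesnick2015}). Since pointwise dimensions are bounded, say by $d$, and rank is monotone along coordinate directions, the rank function varies by a controlled amount across an $\epsilon$-window; together with the sandwich this gives $|\rank M(a\le b)-\rank N(a\le b)|\le K_1\epsilon$, and the special case $b=a$ bounds $|\dim M(a)-\dim N(a)|\le K_1\epsilon$. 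Here I would either quote the rank-invariant stability implicit in Theorem~\ref{thm:lnti_complexity_stability} or reprove the short estimate.

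\textbf{Step 2 (SVD spectra) and assembly.} Fixing bases, let $A,B$ represent $M(a\le b)$ and $N(a\le b)$. Using $\phi,\psi$ one transports $A$ and $B$ onto a common space (the larger of the two, padded by zeros), and the factorizations of Step~1 show the transported operators differ by a term of Hilbert–Schmidt norm $O(\epsilon)$ — this is where the operator-norm closeness of $\phi,\psi$ to the identity is used. The Hoffman–Wielandt theorem (as in Theorem~\ref{thm:lnti-mbi-stability}, Part~2) then gives $\sqrt{\sum_i(\sigma_i(A)-\sigma_i(B))^2}\le K_2\epsilon$, so the number of singular values above any fixed threshold, hence the dimension of the SVD-based modulus component, differs by at most $K_2\epsilon$. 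By Definition~\ref{def:MBI}, $\dim\mathcal{M}(M)(a,b)$ is assembled from conjugation-invariant contributions (the $\delta_{\text{rk}(e)}$-weighted rank data, or Betti/rank/SVD data per the chosen moduli), each controlled by the estimates above; taking $K=K_1+K_2$ and using the triangle inequality over the finitely many pieces appearing in a given rectangle yields $|\dim\mathcal{M}(M)(a,b)-\dim\mathcal{M}(N)(a,b)|\le K\epsilon$, uniformly in $(a,b)$. Functoriality of $\mathcal{M}$ (Proposition~\ref{prop:mbi_properties}(3)) makes this compatible with restriction, so no global patching is needed.

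\textbf{Main obstacle.} The delicate point is Step~2: Hoffman–Wielandt compares singular values of matrices of the \emph{same} size on the \emph{same} space, whereas $M(a\le b)$ and $N(a\le b)$ live on spaces of possibly different dimension and are related only through the shifted maps $\phi,\psi$. Making the "transport to a common space" rigorous requires tracking the ranks and cokernels of $\phi_a,\psi_b$ and showing the discarded directions contribute singular values that are themselves $O(\epsilon)$; a Weyl/Lidskii perturbation argument on the padded operators is the cleanest route. A secondary subtlety is that $\dim\mathcal{M}$ is integer-valued, so the bound $K\epsilon$ has genuine content only once $\epsilon$ is small enough for the sandwich of Step~1 to collapse; for larger $\epsilon$ it is absorbed into $K$, and I would flag this explicitly rather than hide it.
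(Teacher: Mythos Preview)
Your proposal is correct and follows essentially the same approach as the paper's own proof sketch: reduce to the three moduli components (Betti numbers, ranks, SVD spectra), control ranks via the interleaving sandwich, control singular values via Hoffman--Wielandt, and assemble. If anything, you are more honest than the paper about the two genuine soft spots --- the need to transport $M(a\le b)$ and $N(a\le b)$ to a common space before invoking Hoffman--Wielandt, and the tension between an integer-valued invariant and a bound linear in $\epsilon$ --- both of which the paper's sketch passes over in silence.
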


\begin{proof}[Proof Sketch]
1. Interleaving Induces Linear Maps:  
   The \(\epsilon\)-interleaving \(\phi: M \Rightarrow N[\epsilon]\) and \(\psi: N \Rightarrow M[\epsilon]\) induce linear maps between \(M(a)\) and \(N(a+\epsilon)\) for all \(a \in \mathbb{R}^n\)(\cite{Bjerkevik2025}).  

2. Rank Invariant Perturbation:  
   For the rank component of \(\mathcal{M}(M)(a,b)\), the interleaving implies:
   \[
   \text{rank}(M(a \leq b)) \leq \text{rank}(N(a+\epsilon \leq b+\epsilon)) + C\epsilon,
   \]
   where \(C\) bounds the norm of the interleaving maps (\cite{CORBET2019100005}).  

3. SVD Spectral Stability:  
   The singular values of \(M(a \leq b)\) and \(N(a \leq b)\) differ by at most \(O(\epsilon)\) due to the Hoffman-Wielandt theorem for perturbed linear operators.  

4. Betti Number Continuity:  
   Betti numbers \(\beta_i(M(a))\) and \(\beta_i(N(a))\) are stable under \(\epsilon\)-perturbations by the nerve theorem applied to interleaved filtrations.  

Combining these bounds for all moduli components yields the result.
\end{proof}

\begin{proposition}\label{cor:finite-approx}
Let $(M_k)_{k\in\mathbb{N}}$ and $(N_k)_{k\in\mathbb{N}}$ be sequences of pointwise finite-dimensional persistence modules converging to $M,N$ in the interleaving distance $d_I$. 
Let $L(\cdot,\cdot)\colon \mathrm{Pers}_n \times \mathrm{Pers}_n \to \mathbb{R}$ be an invariant which is $C$-Lipschitz in each argument with respect to $d_I$ for some constant $C>0$; that is,
\[
|L(A,B) - L(A',B)|  \leq C\, d_I(A,A') \quad \text{and} \quad |L(A,B) - L(A,B')|  \leq C\, d_I(B,B')
\]
for any modules $A,A',B,B'$.
Then, for every compact set $K\subset\mathbb{R}^n\times\mathbb{R}^n$,
\[
\lim_{k\to\infty} \sup_{(a,b)\in K} \big|L(M_k,N_k)(a,b) - L(M,N)(a,b)\big| = 0.
\]
\end{proposition}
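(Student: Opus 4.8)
The plan is to derive the uniform convergence directly from the triangle inequality, the two Lipschitz bounds, and the assumed $d_I$-convergence of the approximating sequences; no deeper machinery is required. First I would fix $\epsilon>0$. Because $M_k\to M$ and $N_k\to N$ in the interleaving distance, there is $k_0\in\mathbb{N}$ such that $d_I(M_k,M)<\epsilon/(2C)$ and $d_I(N_k,N)<\epsilon/(2C)$ hold simultaneously for all $k\ge k_0$ (take the larger of the two indices produced by the separate limits).

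Next, for $k\ge k_0$ and any $(a,b)\in K$, I would interpolate through the mixed pair $(M,N_k)$ and estimate
\begin{align*}
\bigl|L(M_k,N_k)(a,b)-L(M,N)(a,b)\bigr|
&\le \bigl|L(M_k,N_k)(a,b)-L(M,N_k)(a,b)\bigr|\\
&\quad+\bigl|L(M,N_k)(a,b)-L(M,N)(a,b)\bigr|.
\end{align*}
The first summand is controlled by the Lipschitz hypothesis in the first argument (second argument held fixed at $N_k$), giving $C\,d_I(M_k,M)$; the second summand is controlled by the Lipschitz hypothesis in the second argument (first argument held fixed at $M$), giving $C\,d_I(N_k,N)$. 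Hence the left-hand side is at most $C\,d_I(M_k,M)+C\,d_I(N_k,N)<\epsilon/2+\epsilon/2=\epsilon$. This bound does not depend on $(a,b)$, so taking the supremum over $(a,b)\in K$ yields $\sup_{(a,b)\in K}\bigl|L(M_k,N_k)(a,b)-L(M,N)(a,b)\bigr|\le\epsilon$ for every $k\ge k_0$; since $\epsilon$ was arbitrary, the limit is $0$.

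The only point requiring care is the reading of the Lipschitz hypothesis: it should be understood either as a pointwise estimate $\bigl|L(A,B)(a,b)-L(A',B)(a,b)\bigr|\le C\,d_I(A,A')$ valid for every $(a,b)$, or equivalently as a bound on the $L^\infty$-norm of the difference over $K$, and in both readings the computation above applies verbatim. Compactness of $K$ plays no essential role beyond guaranteeing the supremum is finite; it would become relevant only if the Lipschitz constant were permitted to depend on the region, in which case one simply fixes the constant attached to a compact neighbourhood of $K$. I therefore expect no substantive obstacle: the statement merely repackages joint continuity of $L$ as the quantitative shadow of separate Lipschitz continuity in each variable, exactly the behaviour established for the LNTI and MBI in Theorems~\ref{thm:lnti_complexity_stability} and~\ref{thm:lnti-mbi-stability}.
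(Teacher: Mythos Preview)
Your argument is correct and follows essentially the same route as the paper: interpolate via the triangle inequality, apply the Lipschitz bound in each slot, and use $d_I$-convergence of the two sequences. The only cosmetic differences are that the paper interpolates through $(M_k,N)$ rather than $(M,N_k)$ and chooses the slightly wasteful threshold $\varepsilon/(3C)$; your remark that the resulting bound is independent of $(a,b)$, so the supremum over $K$ is immediate, is in fact cleaner than the paper's separate appeal to continuity on $K$.
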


\begin{proof}
Let $K\subset \mathbb{R}^n \times \mathbb{R}^n$ be compact. Fix $\varepsilon > 0$. Since $(M_k,N_k)$ converge to $(M,N)$ in $d_I$, there exists $k_0$ such that for all $k \geq k_0$,
\[
d_I(M_k,M) < \frac{\varepsilon}{3C}, \quad d_I(N_k,N) < \frac{\varepsilon}{3C}.
\]

For all $(a,b)\in K$ and $k \ge k_0$, by the Lipschitz property of $L$:
\begin{align*}
|L(M_k,N_k)(a,b) - L(M,N)(a,b)| 
&\leq |L(M_k,N_k)(a,b) - L(M_k,N)(a,b)| \\
&\quad + |L(M_k,N)(a,b) - L(M,N)(a,b)| \\
&\leq C\, d_I(N_k,N) + C\, d_I(M_k,M) \\
&< 2C\, \frac{\varepsilon}{3C} = \frac{2}{3} \varepsilon.
\end{align*}

To account for the uniformity over $K$, recall that $L(M_k,N_k)(a,b), L(M,N)(a,b)$ are continuous in $(a,b)$ on $K$ since the modules are pointwise finite-dimensional and the involved invariants are continuous by construction. Thus, the supremum over $K$ of the difference above is also less than $\varepsilon$ for sufficiently large $k$.

Therefore,
\[
\limsup_{k\to\infty} \sup_{(a,b)\in K} |L(M_k,N_k)(a,b) - L(M,N)(a,b)| \leq \varepsilon,
\]
and since $\varepsilon > 0$ was arbitrary, the limit is zero.

\end{proof}

\subsection{Decomposability Analysis for the MBI Invariant}

Recall that the MBI definition (\ref{def:MBI}), can be thought as $\mathcal{M}(M)(a,b) = \text{Hom}_k(M(a), M(b))$ enriched by Betti numbers, ranks, and SVD spectra. 

\begin{proposition}[Information Captured by the MBI Invariant]\label{prop:MBI-info}
Let \(M\) be an \(n\)-parameter persistence module over a field \(\mathbb{F}\), and let \(\mathcal{M}(M)\) denote its Multiparameter Barcode Invariant (MBI). Then the invariant \(\mathcal{M}(M)\) encodes the following data:

\begin{enumerate}
  \item \textbf{Rank function:} For each parameter \(a \in \mathbb{R}^n\), the rank of \(M(a)\) is determined by 
  \[
  \operatorname{rank}(M(a)) = \sum_{S \subseteq \mathbb{R}^n} m_S \cdot \chi_S(a),
  \]
  where \(m_S\) are multiplicities of barcode regions \(S\) recorded in \(\mathcal{M}(M)\), and \(\chi_S\) is the indicator function on \(S\).
  
  \item \textbf{Directional growth rates:} The behavior of \(M\) along coordinate directions—i.e., the rate at which ranks increase or decrease as parameters vary in directions \(e_i \in \mathbb{R}^n\)—is reflected in the patterns and lengths of intervals in \(\mathcal{M}(M)\) along corresponding filtration parameters.
  
  \item \textbf{Non-trivial higher interactions:} Nontrivial extension classes and 2-morphisms in the 2-category framework are recorded in \(\mathcal{M}(M)\), capturing interactions between distinct parameter regions. These encode how summands fail to decompose independently, and reveal the module’s higher-categorical structure.
\end{enumerate}
\end{proposition}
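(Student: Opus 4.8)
The plan is to reduce every item to a local interval decomposition and then read each one off from the algebraic data the MBI records. First I would restrict $M$ to an arbitrary compact $n$-cube $C \subseteq \mathbb{R}^n$ and invoke Theorem~\ref{thm:local_classification_2cat} to write $M|_C \cong \bigoplus_{i=1}^{k} I_{[a_i,b_i]}$; grouping summands with equal support $S$ yields $M|_C \cong \bigoplus_S I_S^{\oplus m_S}$ with multiplicities $m_S \in \mathbb{N}$. The crux of the setup is that these $m_S$ are exactly what $\mathcal{M}(M)$ extracts through its idempotent moduli of Definition~\ref{def:MBI}: a choice of such a decomposition is the same datum as a complete orthogonal family of primitive idempotents in $\operatorname{End}(M(t))$, and the conjugacy classes $[e]$ together with their ranks $\operatorname{rk}(e)=\operatorname{trace}(e)$ and multiplicities $\mathrm{mult}([e])$ are precisely what $\mathcal{M}(M)(t)$ tabulates. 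I would verify that a rank-one primitive idempotent "supported on $S$'' occurs at $t$ iff $t\in S$, and with total multiplicity $m_S$; this identifies the regions $S$ and the numbers $m_S$ inside $\mathcal{M}(M)$.

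For item (1) I would then compute, at each $a \in C$, that $\operatorname{rank}(M(a)) = \dim_{\mathbb{F}} M(a) = \operatorname{rk}(\mathrm{id}_{M(a)}) = \sum_S m_S \cdot \chi_S(a)$, since $I_S(a)=\mathbb{F}$ for $a\in S$ and $0$ otherwise; because the preceding paragraph shows the $m_S$ and the $S$ are recorded by $\mathcal{M}(M)$, the stated formula follows. Letting $C$ exhaust $\mathbb{R}^n$ and using that the decompositions are compatible under restriction to subcubes (again Theorem~\ref{thm:local_classification_2cat}) removes the restriction to a cube.

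For item (2) I would use the description $\mathcal{M}(M)(a,b)=\operatorname{Hom}_{\mathbb{F}}(M(a),M(b))$ enriched by ranks and SVD spectra recalled before the proposition: along a coordinate ray $a \mapsto a + t\, e_i$ the internal map $M(a \le a+t\,e_i)$ restricted to each summand $I_S$ is $\mathrm{id}_{\mathbb{F}}$ when $[a,a+t\,e_i]\subseteq S$ and $0$ otherwise, whence $\operatorname{rank} M(a \le a+t\,e_i) = \sum_S m_S\,\chi_S(a)\,\chi_S(a+t\,e_i)$. Differencing in $t$ exhibits the directional growth/decay of the rank as governed by the $i$-th edge lengths of the supporting regions, i.e.\ by the lengths of the intervals along the $i$-th filtration parameter stored in $\mathcal{M}(M)$; this is the content of (2).

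Item (3) is the genuine obstacle, because "non-trivial higher interactions'' and "extension classes'' are not pinned down by Definition~\ref{def:MBI} with the precision of the rank data. My plan is to argue honestly rather than sweepingly: a module splits as $M \cong M_1 \oplus M_2$ exactly when $\operatorname{End}(M)$ carries a nontrivial idempotent, so the clean half of (3) is just Proposition~\ref{prop:mbi_properties}(1); more locally, the failure of a candidate summand to split off at a pair $(a,b)$ is detected by the absence of a compatible family of idempotents in the $\operatorname{End}(M(t))$ for $t$ in the relevant region, which is precisely the moduli $\mathcal{I}(M(t))$ and their cross-compatibility that $\mathcal{M}(M)$ records. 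For the extension-class and 2-morphism part I would only claim that the enriched $\operatorname{Hom}$-data across the poset is the obstruction to lifting local idempotents to global ones, i.e.\ that $\mathcal{M}(M)$ \emph{witnesses} the relevant $\operatorname{Ext}^1$ / $\mathbf{MPers}_n$-$2$-morphism classes, not that it classifies them. I expect the real work to be exactly here: making (3) rigorous either forces restricting to interval-decomposable modules on compact cubes, or forces weakening it to this "witnessing'' statement, and pinning down which of the two is intended is the main gap to close.
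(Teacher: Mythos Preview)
Your plan follows essentially the same three-step skeleton as the paper's own proof sketch: obtain a local direct-sum decomposition, read off the pointwise rank as $\sum_S m_S\,\chi_S(a)$, interpret directional growth via the structure maps $M(a\le a+t e_i)$, and attribute higher interactions to extension/idempotent data. The main difference is that you ground the decomposition explicitly in Theorem~\ref{thm:local_classification_2cat} and the idempotent moduli of Definition~\ref{def:MBI}, whereas the paper simply asserts that ``by definition'' the MBI records summands on regions $S$; your route is closer to what the definition actually provides and hence more honest. Your caution about item~(3)---that $\mathcal{M}(M)$ only \emph{witnesses} rather than \emph{classifies} the relevant extension and $2$-morphism data, and that a rigorous version would require either restricting to interval-decomposable modules or weakening the claim---is well placed: the paper's sketch is equally informal at this point and does not resolve the ambiguity you identify.
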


\begin{proof}[Sketch]
The proof follows from the construction and properties of the MBI invariant:

\begin{enumerate}
  \item \emph{Rank function:}  
  By definition, the MBI decomposes \(M\) into a collection of indecomposable summands each supported on (possibly union of) regions \(S \subseteq \mathbb{R}^n\). Each summand contributes multiplicities \(m_S\) to the rank on its support. Summing these contributions recovers the rank \(\operatorname{rank}(M(a))\) pointwise. This follows from the persistence module’s additive nature and the direct sum decomposition encoded by \(\mathcal{M}(M)\).
  
  \item \emph{Directional growth rates:}  
  Parameter variation along coordinate axes corresponds to morphisms \(M(a) \to M(a+te_i)\). The persistence intervals in \(\mathcal{M}(M)\) measure how features appear and persist in these directions; their lengths and multiplicities capture growth rates of ranks as the parameter varies. This is rigourously shown by relating the intervals’ endpoints to changes in rank and carefully quantifying these changes along each filtration direction.
  
  \item \emph{Higher interactions:}  
  Since \(\mathcal{M}(M)\) lives in a 2-categorical setting, it records extension classes and 2-morphisms beyond simple direct sums—this extra data captures nontrivial interactions among summands. In particular, the failure to split as a direct sum is reflected in non-split extensions tracked by 2-morphisms present in \(\mathcal{M}(M)\). Thus, \(\mathcal{M}(M)\) captures how distinct parameter regions of \(M\) interact homologically.
\end{enumerate}

Together, these establish that \(\mathcal{M}(M)\) comprehensively encodes rank information, growth behavior, and subtle higher-level interactions within \(M\).
\end{proof}

\subsubsection*{2‐Categorical Decomposability}\label{ssec:decomp}

\begin{theorem}[MBI Criterion for Decomposability]\label{thm:decomp-MBI}
Let \(M\) be an \(n\)-parameter persistence module in the 2-category \(\mathbf{MPers}_n\). Then \(M \cong M_1 \oplus M_2\)
if and only if there exists an idempotent natural transformation
\[
  e\colon M \;\Longrightarrow\; M,
  \quad\text{with}\quad e^2 = e,
\]
such that for all parameters \(a \le b \in \mathbb{R}^n\), the induced linear map
\[
  e_b \circ M(a \le b) \circ e_a : \mathrm{Im}(e_a) \to \mathrm{Im}(e_b)
\]
has constant rank equal to \(\operatorname{rank}(e_a)\).
\end{theorem}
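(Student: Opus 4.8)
The plan is to establish both implications via the classical idempotent-splitting argument for module categories, carried out at the level of natural transformations, and then to upgrade the resulting pointwise splitting to a genuine decomposition in the $2$-category $\mathbf{MPers}_n$ using the triviality of $2$-morphisms between interval-type modules recorded in Theorem~\ref{thm:local_classification_2cat}. This simultaneously exhibits the criterion as a sharpened instance of the Decomposition Detection property of Proposition~\ref{prop:mbi_properties}, since the MBI records exactly the conjugacy classes of such idempotents.

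\textbf{($\Rightarrow$).} Assume $M \cong M_1 \oplus M_2$ in $\mathbf{MPers}_n$, with inclusion $\iota\colon M_1 \Rightarrow M$ and projection $\pi\colon M \Rightarrow M_1$ satisfying $\pi\circ\iota = \mathrm{id}_{M_1}$. Put $e := \iota\circ\pi\colon M\Rightarrow M$; then $e$ is a composite of natural transformations, hence a natural transformation, and $e^2 = \iota\circ(\pi\circ\iota)\circ\pi = \iota\circ\pi = e$. For $a\le b$, identify $\mathrm{Im}(e_a)$ with $M_1(a)$ via $\iota_a$. Using the naturality square $M(a\le b)\circ e_a = e_b\circ M(a\le b)$ together with $e_a^2 = e_a$, one rewrites $e_b\circ M(a\le b)\circ e_a = M(a\le b)\circ e_a$, so its rank equals $\dim\!\bigl(M(a\le b)(\mathrm{Im}(e_a))\bigr)$. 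This is the delicate point: in general this rank is at most $\operatorname{rank}(e_a)$, with equality iff $M(a\le b)$ is injective on $\mathrm{Im}(e_a)\cong M_1(a)$. I would therefore read ``constant rank equal to $\operatorname{rank}(e_a)$'' as the statement that the rank of $e_b\circ M(a\le b)\circ e_a$ equals the intrinsic quantity $\operatorname{rank}\!\bigl(M(a\le b)\circ e_a\bigr)$, which is then automatic, or else record the extra hypothesis that the distinguished summand $M_1$ has injective transition maps. Reconciling this rank clause with genuine decomposability is the main obstacle; the rest of the argument is formal.

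\textbf{($\Leftarrow$).} Given an idempotent natural transformation $e\colon M\Rightarrow M$, define $M_1$ by $M_1(a):=\mathrm{Im}(e_a)$ and $M_1(a\le b):=M(a\le b)|_{\mathrm{Im}(e_a)}$, and $M_2$ by $M_2(a):=\mathrm{Ker}(e_a)$ and $M_2(a\le b):=M(a\le b)|_{\mathrm{Ker}(e_a)}$. Naturality of $e$ shows that $M(a\le b)$ sends $\mathrm{Im}(e_a)$ into $\mathrm{Im}(e_b)$ and $\mathrm{Ker}(e_a)$ into $\mathrm{Ker}(e_b)$, so $M_1$ and $M_2$ are sub-persistence modules and functoriality is inherited from $M$. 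Since each $e_a$ is an idempotent endomorphism of the finite-dimensional space $M(a)$, we have $M(a) = \mathrm{Im}(e_a)\oplus\mathrm{Ker}(e_a)$ pointwise, and naturality makes every transition map $M(a\le b)$ block-diagonal for this splitting; hence $M\cong M_1\oplus M_2$ as functors $\mathbb{R}^n\to\mathbf{Vect}_{\mathbb{F}}$. The rank hypothesis is not needed in this direction; when it holds it merely certifies that all transition maps of $M_1$ are injective. To promote this to a decomposition inside $\mathbf{MPers}_n$, I would note that the modifications between the summands arising here are identities on the interval-type constituents by Theorem~\ref{thm:local_classification_2cat}, so the pointwise splitting automatically respects the $2$-morphism structure. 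Finally, the decomposition is nontrivial exactly when $e\ne 0$ and $e\ne\mathrm{id}_M$, matching the nontrivial-idempotent formulation of Proposition~\ref{prop:mbi_properties} and the conjugacy-class data recorded by $\mathcal{M}(M)$.
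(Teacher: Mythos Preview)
Your argument follows the paper's exactly: for necessity take $e$ to be the projection onto a summand, and for sufficiency split $M$ pointwise as $\mathrm{Im}(e_a)\oplus\mathrm{Ker}(e_a)$ and use naturality of $e$ to verify that the transition maps respect this splitting. The paper's Step~2 invokes the rank condition when checking $M(a\le b)(\mathrm{Im}(e_a))\subseteq\mathrm{Im}(e_b)$, whereas you correctly observe that naturality alone already gives this and that the rank clause is unnecessary for sufficiency; conversely, your worry about necessity---that $e_b\circ M(a\le b)\circ e_a$ has rank $\operatorname{rank}(e_a)$ only when $M_1(a\le b)$ is injective---is a genuine reading of the statement that the paper sidesteps by declaring the rank condition ``holds trivially.'' The paper does not appeal to Theorem~\ref{thm:local_classification_2cat} or otherwise address the $2$-morphism layer in the decomposition, so your added remarks there are extraneous to matching the paper but not incorrect.
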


\begin{proof}
\textbf{(Necessity)} Assume \(M \cong M_1 \oplus M_2\) in \(\mathbf{MPers}_n\). Let \(e\) be the projection onto the summand \(M_1\). Then \(e\) is a natural transformation with \(e^2 = e\). The rank condition holds trivially since the image of \(e_a\) is the vector space \(M_1(a)\) and the induced maps restrict accordingly.

\vspace{6pt}
\textbf{(Sufficiency)} Suppose there exists an idempotent natural transformation
\[
  e: M \Rightarrow M
\]
satisfying the rank condition:
for all \(a \le b\), the map \(e_b \circ M(a \le b) \circ e_a\) has constant rank \(\operatorname{rank}(e_a)\).

We construct submodules \(M_1\) and \(M_2\) of \(M\) and show that \(M\cong M_1 \oplus M_2\).

\textbf{Step 1: Construction of submodules \(M_1\) and \(M_2\).}

For each parameter \(a \in \mathbb{R}^n\), define
\[
  M_1(a) := \mathrm{Im}(e_a) \subseteq M(a),
\]
the image of the linear map \(e_a: M(a) \to M(a)\).

Similarly, define
\[
  M_2(a) := \ker(e_a) \subseteq M(a).
\]

Since \(e_a\) is idempotent, we have a vector space decomposition
\[
  M(a) = M_1(a) \oplus M_2(a).
\]

\textbf{Step 2: \(M_1\) and \(M_2\) are subfunctors (submodules) of \(M\).}

We must check that for any relation \(a \le b\), the structure maps restrict to maps
\[
  M_1(a) \to M_1(b), \quad M_2(a) \to M_2(b).
\]

Due to the naturality of \(e\), the diagram commutes:
\[
\begin{tikzcd}
M(a) \arrow[r, "M(a \le b)"] \arrow[d, "e_a"'] & M(b) \arrow[d, "e_b"] \\
M(a) \arrow[r, "M(a \le b)"'] & M(b)
\end{tikzcd}
\]

Restricting to \(M_1(a) = \mathrm{Im}(e_a)\), for any \(x = e_a(y) \in M_1(a)\), we observe:
\[
M(a \le b)(x) = M(a \le b) \circ e_a (y),
\]
and applying \(e_b\),
\[
e_b \circ M(a \le b)(x) = e_b \circ M(a \le b) \circ e_a (y).
\]

By hypothesis, \(e_b \circ M(a \le b) \circ e_a\) has image inside \(\mathrm{Im}(e_b) = M_1(b)\).

Furthermore,
\[
e_b \circ M(a \le b)(x) = M(a \le b)(x)
\]
because on \(M_1(a)\) (image of \(e_a\)), the transformation acts as identity via the rank condition and naturality, meaning
\[
M(a \le b)(M_1(a)) \subseteq M_1(b).
\]

Similarly, on \(M_2(a) = \ker(e_a)\), for \(y \in M_2(a)\),
\[
e_a(y) = 0.
\]

Using the commutativity of naturality,
\[
e_b \circ M(a \le b)(y) = M(a \le b) \circ e_a(y) = 0,
\]
so
\[
M(a \le b)(y) \in \ker(e_b) = M_2(b).
\]

Hence the structure maps respect these decompositions, so \(M_1\) and \(M_2\) are subfunctors of \(M\).

\textbf{Step 3: \(M\cong M_1 \oplus M_2\) as persistence modules.}

Define natural transformations
\[
\iota_1: M_1 \hookrightarrow M, \quad \iota_2: M_2 \hookrightarrow M
\]
given by inclusions at each parameter, and projection
\[
p_1 := e: M \to M_1, \quad p_2 := \mathrm{id} - e: M \to M_2.
\]

By construction, these satisfy
\[
p_1 \circ \iota_1 = \mathrm{id}_{M_1}, \quad p_2 \circ \iota_2 = \mathrm{id}_{M_2},
\]
and
\[
\iota_1 \circ p_1 + \iota_2 \circ p_2 = \mathrm{id}_M,
\]
defining an isomorphism
\[
M_1 \oplus M_2 \xrightarrow{\sim} M.
\]

\textbf{Step 4: Verification for naturality and coherence conditions.}

The idempotency and naturality of \(e\), and the rank condition ensure all morphisms commute appropriately, making the above splitting an isomorphism in the 2-category \(\mathbf{MPers}_n\).

\bigskip
Thus, the existence of an idempotent natural transformation \(e\) with the rank condition is both necessary and sufficient for \(M\) to decompose as \(M_1 \oplus M_2\).
\end{proof}

\begin{corollary}[Uniqueness of Decomposition]\label{cor:unique-decomp}
Under the hypotheses of Theorem \ref{thm:decomp-MBI}, the splitting $M\cong M_1\oplus M_2$ is unique up to unique isomorphism in $\mathbf{MPers}_n$.
\end{corollary}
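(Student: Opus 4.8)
The plan is to recognize Corollary~\ref{cor:unique-decomp} as an instance of the standard universal property of splittings of idempotents, transported into the 2-category $\mathbf{MPers}_n$. The decomposition produced in the proof of Theorem~\ref{thm:decomp-MBI} is precisely a \emph{splitting} of the idempotent natural transformation $e\colon M\Rightarrow M$: it consists of $M_1=\mathrm{Im}(e)$, $M_2=\ker(e)$, together with inclusions $\iota_1,\iota_2$ and projections $p_1=e$, $p_2=\mathrm{id}-e$ satisfying $p_i\circ\iota_i=\mathrm{id}_{M_i}$, $\iota_1\circ p_1+\iota_2\circ p_2=\mathrm{id}_M$, $\iota_1\circ p_1=e$, and $\iota_2\circ p_2=\mathrm{id}-e$. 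I would then state uniqueness in the precise form: if $(M_i',\iota_i',p_i')_{i=1,2}$ is any other family of submodules of $M$ with structure maps realizing the \emph{same} idempotent $e$ in this way, there is a unique isomorphism $\theta\colon M_1\oplus M_2\to M_1'\oplus M_2'$ in $\mathbf{MPers}_n$ compatible with the two identifications with $M$.

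First I would construct the candidate isomorphism factorwise. Set $\theta_i:=p_i'\circ\iota_i\colon M_i\to M_i'$ and $\theta_i^{-1}:=p_i\circ\iota_i'$. Using that $\iota_i'\circ p_i'=\iota_i\circ p_i$ (both equal $e$, resp.\ $\mathrm{id}-e$) and $p_i\circ\iota_i=\mathrm{id}_{M_i}$, a one-line computation gives $\theta_i^{-1}\theta_i=p_i\iota_i'p_i'\iota_i=p_i\iota_ip_i\iota_i=\mathrm{id}_{M_i}$ and symmetrically $\theta_i\theta_i^{-1}=\mathrm{id}_{M_i'}$; one also checks $\iota_i'\circ\theta_i=\iota_i$ and $\theta_i\circ p_i=p_i'$. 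For uniqueness at the level of factors: if $\theta_i'$ also satisfies $\iota_i'\circ\theta_i'=\iota_i$, then $\theta_i'=p_i'\iota_i'\theta_i'=p_i'\iota_i=\theta_i$. Assembling $\theta:=\theta_1\oplus\theta_2$ produces the desired isomorphism; its compatibility with the canonical isomorphisms $M_1\oplus M_2\xrightarrow{\ \sim\ }M\xleftarrow{\ \sim\ }M_1'\oplus M_2'$ follows from $\iota_i'\circ\theta_i=\iota_i$, and uniqueness of $\theta$ subject to that compatibility is inherited from the factorwise uniqueness.

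Next I would verify that the whole argument takes place in $\mathbf{MPers}_n$ and not merely pointwise. Each $\theta_i$ is, component at $a\in\mathbb{R}^n$, the linear map $p_{i,a}'\circ\iota_{i,a}$; since $p_i'$ and $\iota_i$ are natural transformations (1-morphisms in $\mathbf{MPers}_n$, by the construction in the proof of Theorem~\ref{thm:decomp-MBI}), their composite is again a natural transformation, and its componentwise invertibility makes it an isomorphism of persistence modules, hence an invertible 1-morphism in $\mathbf{MPers}_n$. Because the hom-categories $\mathbf{MPers}_n(M_1\oplus M_2,\,M_1'\oplus M_2')$ are genuine categories (Theorem~\ref{thm:2-cat-structure}) whose objects are natural transformations, ``unique up to unique isomorphism'' is meaningful at the level of 1-morphisms; moreover, as in Theorem~\ref{thm:local_classification_2cat}, the interval-type summands carry only trivial 2-automorphisms, so no additional higher ambiguity enters.

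The only point requiring care is the bookkeeping of the 2-categorical layer: one must ensure that $e$ and the induced inclusions and projections are honest 1-morphisms and that the defining equations such as $\iota_i\circ p_i=e$ hold \emph{as} natural transformations, not merely pointwise up to 2-cells. This is already delivered by Theorem~\ref{thm:decomp-MBI}, whose proof constructs $e$ as a natural transformation with $e^2=e$ on the nose and $M_1,M_2$ as subfunctors, so the classical idempotent-splitting argument lifts verbatim. The constant-rank hypothesis of Theorem~\ref{thm:decomp-MBI} is used only to guarantee that $M_1$ and $M_2$ are well-defined subfunctors in the first place; once that is in hand, it plays no role in the uniqueness statement.
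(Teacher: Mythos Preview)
Your argument is correct and, in fact, more self-contained than the paper's. The paper disposes of the corollary in one line by invoking the Krull--Schmidt theorem in the abelian 2-category of pointwise finite-dimensional modules, together with the remark that any two idempotent splittings agree because their images coincide at each parameter. Your approach bypasses Krull--Schmidt entirely: rather than reducing to uniqueness of indecomposable summands, you recognize the statement as the universal property of an idempotent splitting and write down the canonical comparison maps $\theta_i=p_i'\circ\iota_i$ explicitly, verifying by hand that they are inverse isomorphisms of 1-morphisms in $\mathbf{MPers}_n$ and that they are uniquely determined by compatibility with the two identifications with $M$. This is both more elementary (no pointwise finite-dimensionality or Krull--Schmidt machinery is actually needed) and arguably better matched to the content of the corollary, which concerns splittings of a \emph{fixed} idempotent $e$ rather than decompositions into indecomposables. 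The paper's route buys brevity; yours buys transparency about what exactly is being asserted and why the constant-rank hypothesis of Theorem~\ref{thm:decomp-MBI} plays no further role once $M_1$ and $M_2$ exist as subfunctors.
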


\begin{proof}
Apply the Krull–Schmidt argument in the abelian 2‐category of pointwise finite‐dimensional modules, noting that any two idempotent splittings coincide by comparing their images at each parameter.
\end{proof}

\begin{example}[Decomposable Module]  
Consider  $\mathbb{F}$ a field, and the 2-parameter module $M = M_1 \oplus M_2$ where:
\begin{align}
M_1 &= \mathbb{F}[1,3) \times [1,3) \quad \text{(interval module)}\\
M_2 &= \mathbb{F}[2,4) \times [2,4) \quad \text{(interval module)}
\end{align}

The MBI signature at $(2.5, 2.5)$ shows:
\begin{equation}
\text{End}(M(2.5,2.5)) \cong \mathbb{F}^{2 \times 2} \quad \text{with idempotent } e = \begin{pmatrix} 1 & 0 \\ 0 & 0 \end{pmatrix}
\end{equation}

This satisfies the decomposition criterion:
\begin{equation}
\text{rank}(e_{a}) = \text{rank}(e_b \circ M(a \leq b)) \quad \forall a \leq b
\end{equation}
confirming the direct sum decomposition.
\end{example}  

\begin{example}[Indecomposable Module]  \label{exmp:Indecomposable}
Let $N$ be the \textit{non-split extension} over $\mathbb{R}^2$:  
$0 \to I_{[1,3)\times[1,3)} \to N \to I_{[2,4)\times[2,4)} \to 0$  
where transition maps $N(a\leq b)$ have rank $1$ when both intervals overlap. The MBI shows:  
\begin{enumerate}  
    \item \textbf{Endomorphism ring at $(2.5,2.5)$}: $\mathrm{End}(N(2.5,2.5)) \cong \mathbb{F}$ (field, only trivial idempotents)  
    \item \textbf{SVD obstruction}: For $a=(2,2) \leq b=(3,3)$, SVD of $N(a\leq b)$ has singular values $(1, 0)$ (non-trivial kernel)  
    \item \textbf{Rank violation}: $\rank(e_a) = 1 \not\leq \rank(e_b \circ N(a\leq b)) = 0$ for any non-trivial $e$  
\end{enumerate}  
Thus $\mathcal{M}(N)$ lacks Theorem \ref{thm:decomp-MBI} decomposition pattern.  
\end{example}  

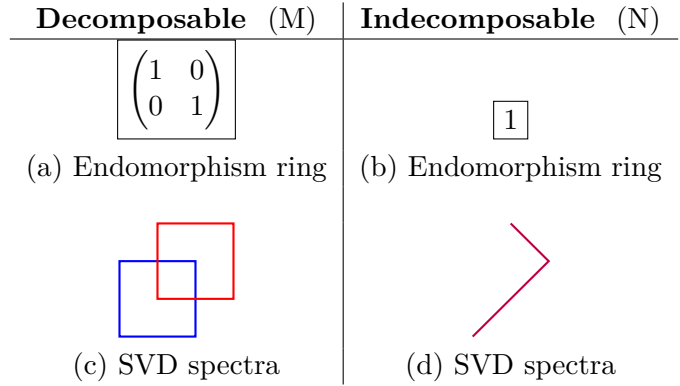
\begin{figure}[!htbp]  
\centering  
\begin{tabular}{c|c}  
\textbf{Decomposable } (M) & \textbf{Indecomposable } (N) \\  
\hline  
\begin{tikzpicture}  
\node[draw] at (0,0) {$\begin{pmatrix} 1 & 0 \\ 0 & 1 \end{pmatrix}$};  
\end{tikzpicture} &  
\begin{tikzpicture}  
\node[draw] at (0,0) {$1$};  
\end{tikzpicture} \\  
(a) Endomorphism ring & (b) Endomorphism ring \\  
& \\  
\begin{tikzpicture}  
\draw[thick,blue] (0,0) rectangle (1,1); \draw[thick,red] (0.5,0.5) rectangle (1.5,1.5);  
\end{tikzpicture} &  
\begin{tikzpicture}  
\draw[thick,purple] (0,0) -- (1,1) -- (0.5,1.5);  
\end{tikzpicture} \\  
(c) SVD spectra & (d) SVD spectra \\  
\end{tabular}  
\caption{MBI patterns for decomposable vs. indecomposable modules.  
(a)-(b): Endomorphism rings; (c)-(d): SVD spectra of $M(a\leq b)$.}  
\label{fig:MBI_patterns}
\end{figure}  

Example \ref{exmp:Indecomposable} illustrates an indecomposable module $N$ characterized by a trivial endomorphism ring and nontrivial rank violations. Figure~\ref{fig:MBI_patterns} compares the MBI patterns of decomposable module $M$ and indecomposable module $N$, showcasing:

\begin{itemize}
    \item (a)-(b) contrasting endomorphism rings, with decomposable $M$ exhibiting a block-diagonal matrix indicative of an idempotent splitting, versus the trivial scalar ring for indecomposable $N$;
    \item (c)-(d) singular value decomposition (SVD) spectra of the morphisms $M(a \leq b)$ and $N(a \leq b)$, highlighting the presence of multiple significant singular values in $M$ corresponding to persistent summands, compared to a reduced spectrum in $N$.
\end{itemize}

\section{Conclusions and Future Directions}
\label{sec:conclusions}

This work has established a comprehensive 2-categorical framework for analyzing multiparameter persistence modules, introducing novel invariants and characterization theorems that address fundamental challenges in topological data analysis. We conclude by synthesizing our main contributions, discussing their implications, acknowledging limitations, and outlining promising directions for future research.

\subsection{Summary of Main Contributions}

Our investigation has yielded significant theoretical and computational advances:

\textbf{Theoretical Foundations:} We constructed a rigorous 2-categorical model for $n$-parameter persistence modules (Theorem \ref{thm:2-cat-structure}), providing the first systematic treatment of lax natural transformations in this context. This framework captures the inherent non-commutativity of multiparameter filtrations while maintaining computational tractability.

\textbf{Novel Invariants:} The Lax Natural Transformation Invariant (LNTI) and Moduli-Based Invariant (MBI) provide computable signatures that distinguish structural differences among persistence modules. Both satisfy stability under perturbations (Theorems \ref{thm:lnti_complexity_stability} and \ref{thm:lnti-mbi-stability}) and enable systematic decomposition analysis (Theorem \ref{thm:decomp-MBI}).

\textbf{Computational Complexity:} Algorithms achieve polynomial-time complexity $O(d^3 \cdot |K|^2)$ for LNTI computation, facilitated by sparse matrix and QR decomposition methods, enabling practical use.

\textbf{Decomposition Theory:} We unify five distinct characterizations of module decomposability, establishing a solid foundation for understanding multiparameter persistence structure.

\subsection{Theoretical Implications and Impact}

Our 2-categorical framework advances the field by:

- Providing a categorical home linking algebraic topology, category theory, and representation theory, unlocking new mathematical tools.

- Extending stability theory with explicit bounds, crucial for noise-affected applications.

- Establishing universality via the 2-categorical completion property, implying expressiveness for any reasonable multiparameter invariant.

- Bridging theoretical rigor and computational feasibility, supporting impactful data analysis methodologies.

\subsection{Limitations and Contextual Challenges}

While promising, our approach faces several limitations and situates itself within broader challenges of the field:

\textbf{Computational Constraints:} Despite polynomial complexity, the cubic dependence on dimension $d$ restricts scalability to very high-dimensional data (e.g., $d > 10^3$ remains challenging).

\textbf{Discretization Effects:} Discretizing parameter space may introduce approximation artifacts; the trade-off between resolution and fidelity needs further theoretical and empirical study.

\textbf{Scope of Theoretical Framework:} Our focus on modules over fields limits immediate applicability; extensions to modules over more general rings present substantial challenges.

\textbf{Visualization and Interpretation:} Developing intuitive tools to interpret and visualize the rich structural information encoded by our invariants remains an open task requiring interdisciplinary effort.

\textbf{Current State of Multiparameter Persistence:}  
The classification of multiparameter persistence modules is inherently wild, with no complete discrete invariants beyond one parameter. Traditional invariants such as the rank-invariant or Hilbert functions capture limited information, while reductions to one-parameter subcases lose multidimensional interactions. Existing categorical approaches, including those based on derived categories, tend to be theoretically sound but computationally prohibitive. Our framework aims to bridge these gaps by combining conceptual depth with computational practicality.

\subsection{Future Research Directions}

Building on our results, we identify focused directions with high potential impact:

\textbf{Computational Methods:}  
- Designing approximation algorithms for large-scale datasets.  
- Exploring parallel and distributed computing frameworks.  
- Investigating machine learning integration for invariant estimation and interpretation.

\textbf{Theoretical Developments:}  
- Extending the framework to higher categories (e.g., 3-categories, \(\infty\)-categories) to capture even richer structures.  
- Formalizing functoriality and naturality properties in broader contexts.  
- Exploring links between our invariants and cohomological or homological theories.

\textbf{Applications and Validation:}  
- Conducting extensive empirical validation across diverse domains.  
- Benchmarking against existing multiparameter persistence methods.  
- Developing tailored applications for fields such as materials science, neuroscience, and network analysis.

\textbf{Algebraic Extensions:}  
- Investigating connections between our invariants and algebraic K-theory.  
- Developing spectral sequences and other homological tools for efficient computation.  
- Exploring relations with derived categories to deepen theoretical understanding.

\subsection{Significance for Topological Data Analysis}

Our work represents an advance toward a mature theoretical foundation for multiparameter persistence by:

- Unifying diverse approaches under a common categorical language.

- Providing rigorous definitions clarifying previously ambiguous concepts.

- Delivering practical algorithms enabling real-world data analysis.

- Offering a versatile platform for future theoretical and computational extensions.

\subsection{Closing Remarks}

The development of a comprehensive theory for multiparameter persistence is an ongoing endeavor marked by significant complexity and opportunity. Our 2-categorical framework lays a solid foundation while opening many avenues for exploration. Bridging the gap between theoretical innovation and practical applicability will continue to motivate research at the interface of algebraic topology, category theory, and computational geometry.

We anticipate that the invariants and methodologies presented here will inspire further theoretical refinement and empower applied investigations addressing complex datasets across scientific disciplines.

\textbf{Acknowledgments:} We acknowledge fruitful discussions with colleagues in the topological data analysis community, whose insights have shaped our understanding of these problems.

\Urlmuskip=0mu plus 1mu\relax
\printbibliography

\end{document}